\newcommand{\Fp}{{\mathbb{F}_p}}
\newcommand{\Char}{\operatorname{char}}
\newcommand{\Norm}{\operatorname{Norm}}
\newcommand{\disc}{\operatorname{disc}}
\newcommand{\Gal}{\operatorname{Gal}}
\newcommand{\K}{{\mathbb K}}
\newcommand{\isom}{ \cong }
\newcommand{\p}{\operatorname{\mathfrak{p}}}
\newcommand{\Q}{{\mathbb Q}}
\newcommand{\Z}{{\mathbb Z}}
\newcommand{\ord}{\operatorname{ord}}
\newcommand{\Magma}{{\sf MAGMA }}
\newcommand{\Mathematica}{{\sf Mathematica }}
\newcommand{\FPP}{\operatorname{FPP}}
\newcommand{\f}{f_{d,c}}
\newcommand{\fo}{f_{d,c_0}}
\newcommand{\G}{G_{d,n}}
\newcommand{\Gu}{G_{d,u}}
\newcommand{\Gt}{G_{d,t}}
\renewcommand{\O}{\mathcal O}
\newenvironment{Proof}{\par\noindent{\sc Proof:}}%
                      {\hspace*{\fill}\nobreak$\Box$\par\medskip}
                       {\hspace*{\fill}\nobreak$\Box$\par\medskip}
\newtheorem{Proposition}{Proposition}[section]
\newtheorem{Theorem}[Proposition]{Theorem}
\newtheorem{Lemma}[Proposition]{Lemma}
\newtheorem{Corollary}[Proposition]{Corollary}
\theoremstyle{definition}
\newtheorem{Remark}[Proposition]{Remark}
 \newtheorem{Example}[Proposition]{Example}
 \newtheorem{Question}[Proposition]{Question}
\renewcommand{\baselinestretch}{1.1}
\begin{document}

\title[Construction of Polynomials with divisibility conditions on orbits]{Construction of Polynomials with prescribed divisibility conditions on the critical orbit}

\author[M. Sadek]%
{Mohammad~Sadek}
\address{Faculty of Engineering and Natural Sciences, Sabanc{\i} University, Tuzla, \.{I}stanbul, 34956 Turkey}
\email{mohammad.sadek@sabanciuniv.edu}
\author[M. Wafik]%
{Mohamed~Wafik}
\email{mwafik@sabanciuniv.edu}

\date{}

\begin{abstract}
	 We consider the family of polynomials $f_{d,c}(x)=x^d+c$ over the rational field $\Q$. Fixing integers $d, n\ge 2$, we show that the density of primes that can appear as primitive prime divisors of $f_{d,c}^n(0)$ for some $c\in\Q$ is positive. In fact, under certain assumptions, we explicitly calculate the latter density  when $d=2$. Furthermore, fixing $d,n\ge 2$, we show that for a given integer $N>0$, there is $c\in \Q$ such that $\f^n(0)$ has at least $N$ primitive prime divisors each of which is appearing up to any predetermined power. This shows that there is no uniform upper bound on the number of primitive prime divisors in the critical orbit of $\f(x)$ that does not depend on $c$. The developed results provide a method to construct polynomials of the form $\f(x)$ for which the splitting field of the $m$-th iteration, $m\ge1$, has Galois group of maximal possible order. During the course of this work, we give explicit new results on post-critically finite polynomials $\f(x)$ over local fields. 
\end{abstract}

\maketitle

\let\thefootnote\relax\footnote{\textbf{Mathematics Subject Classification:} 37P05, 37P15, 37P20\\

\textbf{Keywords:} arithmetic dynamics, post-critically finite polynomials, primitive prime divisors, critical orbits}

\section{Introduction}

Let $R$ be an integral domain and $f(x)\in R[x]$. We write $f^n(x)$, $n\ge 0$, for the $n$-th iterate of $f(x)$. In other words, $f^0(x)=x$ and $f^n(x)=f(f^{n-1}(x))$. If $a_0\in R$, the orbit of $a_0$ under the iteration of $f$ is defined to be the set $$O_f(a_0)=\{a_0,f(a_0),\ldots, f^n(a_0),\ldots\}.$$
If $a_0$ is such that $f'(a_0)=0$, then $a_0$ is said to be a {\em critical point} of $f(x)$ and $O_f(a_0)$ is called a {\em critical orbit} of $f(x)$. 

If  $O_f(a_0)$ is a finite set, then $a_0$ is called a {\em preperiodic point of $f(x)$},  and $O_f(a_0)$ is called a {\em preperiodic orbit}. The latter is equivalent to the existence of two integers $m\ge 0$ and $n\ge 1$ such that $f^n(f^m(a_0))=f^m(a_0)$. If $n$ is the smallest such positive integer, then $n$ is called the {\em exact period} of $f^m(a_0)$, whereas $m$ is called the tail length of the orbit $O_f(a_0)$. In the latter case, $a_0$ is said to be a preperiodic point of $f(x)$ of {\em period type} $(m,n)$.  In case $m=0$, the point $a_0$ is called a {\em periodic point of $f(x)$} and $O_f(a_0)$ is called a periodic orbit. If $m\ge 1$, then $a_0$ is called a {\em strictly preperiodic point of $f(x)$}, and $O_f(a_0)$ is a strictly preperiodic orbit. 
If the orbit of $a_0$ is infinite, then $a_0$ is said to be a {\em wandering point} for $f(x)$. If all the critical orbits of $f(x)$ are finite, then $f(x)$ is said to be a {\em post-critically finite} polynomial, or shortly a PCF polynomial.  

In this article, we concern ourselves with the polynomial map $\f(x)=x^d+c$, $d\ge 2$ and $c\in R$. 
The polynomial $\f(x)$ has a unique critical point $0$. One may ask how many such polynomials in $R[x]$ have preperiodic critical orbits. In fact, one knows that if $R$ is taken to be the ring of integers $\Z$, then there are finitely many PCF polynomials of the form $\f(x)\in\Z[x]$, namely, the orbit of $0$ under $\f(x)$ is always  infinite unless $c=0$; $c=-1$ and $d$ is even; or $c=-2$ and $d=2$, see \cite[Lemma 8]{DoerksenHaensch+2012+465+472}. In this work, we study PCF polynomials over discrete valuation rings.

Let $\f(x)\in R[x]$ where $R$ is the ring of integers of a discrete valuation field and $k$ is the residue field of $R$ modulo the unique maximal ideal. We obtain information about the critical orbit of $\f(x)$ by investigating the structure of the critical orbit of the reduction of $\f(x)$ over the residue field. If $\f(x)$ possesses a strictly preperiodic point $r$ whose multiplier is a unit, we prove that the reduction of $r$ is a strictly preperiodic point for the reduction of $\f(x)$ whose orbit has the same tail length as the tail length of $O_f(r)$. As for the exact period of $r$, it can be determined in terms of the exact period of its reduction, see \cite[Theorem 2.21]{dynamicsbook}.  On the other hand, if the characteristic of the residue field is larger than $d$ and $0$ is a periodic point of $\f(x)$, we show that the reduction of $0$ is again periodic of the same exact period.

The latter results allow us to establish a connection between PCF polynomials of the form $\f(x)\in R[x]$, and PCF polynomials $\f(x)\in k[x]$. In fact, fixing $d\ge 2$ and assuming certain divisibility conditions on the associated Gleason polynomial, we show that there is a one-to-one correspondence between the polynomials $\f(x)\in R[x]$ with periodic critical orbits and the polynomials $\f(x)\in k[x]$ with periodic critical orbits. The one-to-one correspondence is explicit and it gives rise to a simple algorithm to find all polynomials $\f(x)\in R[x]$ with periodic critical orbits. The interested reader may consult \cite{Gleason} for the definition and properties of Gleason polynomials.

We now assume that $K$ is a number field with ring of integers $R$. Let $\p$ be a prime ideal in $R$ with a corresponding discrete valuation $\nu_{\p}$. If $a_n, \ n\ge 1$, is a sequence of elements in $K$, then $\p$ is called a primitive prime divisor of $a_n$ if $a_n\neq 0$, $\nu_{\p}(a_n)>0$, and $\nu_{\p}(a_t)=0$ for all $1\leq t<n$. The primitive prime divisors of the critical orbit of polynomials of the form $\f(x)\in K[x]$ have been studied extensively in literature. In \cite[Theorem 3]{DoerksenHaensch+2012+465+472}, it was shown that when the critical orbit is infinite, then there is at least one primitive prime divisor of $\f^n(0)$ for all $n\geq 2$ when $c=\pm 1$, and for all $n\geq 1$ otherwise. The latter result was later generalized in \cite{Holly_Krieger} to any polynomial of the form $\f(x)\in\Q[x]$. It can be seen that when $c\not\in\Z$, the critical orbit of $\f(x)$ is always infinite. With that, it was shown in \cite[Theorem 1.1]{Holly_Krieger} that for all $n\geq 1$, there is at least one primitive prime divisor of $\f^n(0)$ except possibly for $23$ values of $n$. Moreover, it was proved in \cite[Theorem 1.3]{Holly_Krieger} that, unless $d$ is even and $c\in(-2^{\frac{1}{d-1}},-1)$, $\f^n(0)$ has at least one primitive prime divisor for all $n>2$.

Given that every element in the critical orbit in $\f(x)\in \Q[x]$ has a primitive prime divisor, except possibly for finitely many elements, one may ask what proportion these primitive prime divisors constitute among all primes.  For this reason we recall that the density of a
set $S$ of primes is defined by  $$\lim_{x\to\infty}\frac{\{\p\in S:\ \Norm_{K/\Q}(\p)\le x\}}{\{\p:\Norm_{K/\Q}(\p)\le x\}}, \quad\textrm{if the limit exists}.$$
It was proved in \cite[Theorem 2]{Odoni} that for the polynomial $f(x)=x^2-x+1\in\Q[x]$, the density of primes dividing at least one nonzero element in the orbit of $a_0\in \Z$ is $0$. 
The same density $0$ result holds for other families of polynomials including $f(x)=x^2-kx+k,\ k\in\Z,$ and $x^2+k,\, {k\in\Z\setminus\{-1\}}$, see \cite[Theorem 1.2]{Prim_Galois}. In addition,
under certain conditions on the field $K$ and the polynomial $\f(x)\in K[x]$, it was shown that the density of prime divisors in the orbit of any $a_0\in K$ under $\f(x)$ is zero, see \cite[Theorem 1]{Rafe_jones}.

Given that the density of prime divisors appearing in the critical orbit of $\f(x)$ is zero for a fixed $c\in \Q$, we ask the following question: fixing $n\ge 2$, what is the density of primes that are primitive divisors of $\f^n(0)$ for some $c\in\Q$? We prove that the density of such primes is positive. In addition, assuming certain conditions on the splitting field of the associated Gleason polynomial to $\f(x)$, we explicitly compute the latter density when $d=2$.

For $K=\Q$, it is known that for all $n\geq 1$, there is at least one primitive prime divisor of $\f^n(0)$ except possibly for finitely many values of $n$, \cite[Theorem 1.1]{Holly_Krieger}. One can deduce an elementary upper bound on the number of primitive prime divisors of $\f^n(0)$ that depends on $d,n$ and $c$. 
One then wonders if there is a uniform upper bound on the latter number that only depends on $d$ and $n$.  We show that the answer to this question is negative. In fact, we prove that if $d\ge 2, \ n\ge 1$ and $t\ge 1$ are positive integers, then there exists an integer $c$ such that $f_{d,c}^{n}(0)$ has at least $t$ primitive prime divisors. In addition, these $t$ primes may appear up to any chosen powers. More precisely, the following theorem will be proved.
\begin{Theorem}
\label{first_theorem}
	Fix integers $d\ge 2$ and $m\ge 1$. For $1\le i\le m$, let
	\begin{enumerate}
	    \item $n_i$ be distinct positive integers,
	    \item $t_i$ be positive integers,
	    \item $(k_{i,1},k_{i,2},\dots,k_{i,t_i})$ be $t_i$-tuples of positive integers. 
	    \item $P$ be a set of primes of density zero in the set of all primes.
	\end{enumerate}
	Then, there exists an integer $c$ such that
	for each $1\leq i\leq m$ and $1\leq j\leq t_i$, there is a primitive prime divisor $p_{i,j}$ of $f_{d,c}^{n_i}(0)$ with $\nu_{p_{i,j}}(f_{d,c}^{n_i}(0))={k_{i,j}}$ and $p_{i,j}\not\in P$. 
\end{Theorem}

 We note that Theorem \ref{first_theorem} is a special case of Theorem \ref{last theorem} when $K=\Q$. One main ingredient in proving this result is showing that for a chosen integer $u\ge 1$, if $p$ is a primitive prime divisor of $\f^n(0)$,  then there is $c'\in\Q$ such that $p$ is a primitive divisor of $f_{d,c'}^n(0)$, $p^u\mid f_{d,c'}^n(0)$ and $p^{u+1}\nmid f_{d,c'}^n(0)$.

  Let $\f(x)\in \Q[x]$ and $K_n$ be the splitting field of $\f^n(x)$. Denote the Galois group of $K_n$ as an extension of $K_{n-1}$ by $H_n$. As mentioned in \cite{Rafe_jones}, $H_n\isom (\Z/d\Z)^m$ for some $0\leq m\leq \deg(\f^{n-1})=d^{n-1}$ with $H_n$ being maximal when $m=d^{n-1}$. It was shown in \cite[Theorem 4.3]{Rafe_jones} that if $\f^n$ is irreducible for all $n\geq 1$, then the existence of a primitive prime divisor $p\nmid d$ of $\f^n(0)$ such that $\nu_p(\f^n(0))$ is coprime to $d$ implies that $H_n$ is maximal. Following this result, one may ask whether there exists a polynomial $\f(x)\in\Q[x]$ such that $\Gal(K_n/\Q)$ attains the maximal possible order. We use a constructive proof to show the existence of such a polynomial. This is done by using our result, namely that for any $n$, there exists an integer $c$ such that $\f^n(0)$ has at least one primitive prime divisor whose square does not divide $\f^n(0)$. This is used to prove that for any $m\geq 1$, there exists an integer $c$ such that $\f^n(x)$ is irreducible for all $n\geq 1$ and $\f^n(0)$ has a primitive prime divisor $p$ that exactly divides $\f^n(0)$ for all $1\leq n\leq m$. The maximality of $H_n$ for all $1\leq n\leq m$ implied by \cite[Theorem 4.3]{Rafe_jones} yields the maximality of $\Gal(K_n/\Q)$.

\subsection*{Acknowledgments} This work is supported by The Scientific and Technological Research Council of Turkey, T\"{U}B\.{I}TAK; research grant: ARDEB 1001/120F308. M. Sadek is supported by BAGEP Award of the Science Academy, Turkey.
\section{Preperiodic Orbits of $\f(x)$ over Local Fields}
\label{sec1}

In this section, $K$ is a discrete valuation field with ring of integers $R$, and discrete valuation $\nu$ corresponding to the unique maximal ideal $\p$ in $R$. The residue field of $R$ with respect to $\p$ is denoted by $k=R/\p$ with characteristic $p$. 
If $r\in R$, then we write $\widetilde{r}$ for its image under the reduction map $R\to k$, while in general we write $r+\p^t$ for the reduction of $r$ modulo $\p^t$, $t\ge1$. Similarly, the image of $f(x)\in R[x]$ under the reduction map $R[x]\rightarrow k[x]$ is denoted by $\widetilde{f}(x)$, and we write $f(x)+\p^t[x]$. for the reduction of $f(x)$ modulo $\p^t[x]$. The units of $R$ are denoted by $R^*$.

We set $\f(x)=x^d+c\in R[x]$, $d\geq 2$. In this section, we study the connection between the orbit structure of a point $r\in R$ under the iterations of $\f(x)\in R[x]$, and the orbit structure of $\widetilde{r}\in k$ under the iterations of $\widetilde{\f}(x)\in k[x]$. Since $k$ is a finite field, the orbit of $\widetilde{r}$ must be finite, hence $\widetilde{r}$ is preperiodic. First, we study the case when $\widetilde{r}$ is strictly preperiodic, i.e, the tail length of the orbit of $\widetilde{r}$ under the iteration of $\widetilde{\f}$ is not $0$. We then concern ourselves with the case $\widetilde{r}=0$ is periodic under $\widetilde{\f}(x)$. 

\subsection{Strictly preperiodic orbits}
 We recall that if $x_0$ is a periodic point of $f(x)\in K[x]$ with exact period $n$, the multiplier of $x_0$ is defined to be $\lambda_{x_0}(f):=\frac{\partial f^n(x)}{\partial x}|_{x=x_0}$.
 
Assuming that the orbit of $\widetilde{r}$ under $\widetilde{\f}(x)$ is strictly preperiodic in $k$, we are particularly interested in finding conditions under which the orbit of $r$ under $\f(x)$ is finite. We introduce the following lemma that will be used throughout the rest of this subsection.

\begin{Lemma}
\label{lem1}
		Let $\f(x)=x^d+c\in R[x]$ where $\nu\left(d\right)=0$. Let $r\in R$ be such that $\widetilde{r}$ is a strictly preperiodic point of $\widetilde{\f}(x)$. If $(m_t,n_t)$ is the period type of $r+\p^t$ in $ R/(\p^t)$, $t\ge 1$,  for $\f(x)+\p^t[x]$, then $\f^{n_t}(x)+\p^{t+1}[x]$ behaves as a linear function around $\f^{m_t}(r)+\p^{t+1}$ in $ R/\p^{t+1}$, i.e, for $y\in\p^t$ 
		\[\left(\f^{n_t}\big(\f^{m_t}(r)+y\big)-\f^{m_t}(r)\right)+\p^{t+1}= \lambda y + b+\p^{t+1}\]
		where $b\in\p^t$ and $\lambda=\frac{\partial \f^{n_t}(x)}{\partial x}|_{x=\f^{m_t}(r)}$.
	\end{Lemma}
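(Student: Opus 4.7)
The plan is to apply the formal Taylor expansion of a polynomial over $R$. For any $P(x) \in R[x]$ and $a \in R$, one can write $P(a + y) = P(a) + P'(a)\, y + y^2 Q(y)$ with $Q(y) \in R[y]$, simply by expanding $P(a+y)$ as a polynomial in $y$ and collecting coefficients; no denominators appear in this purely algebraic identity. I would apply this with $P = \f^{n_t}$ and $a = \f^{m_t}(r)$, so that the linear coefficient is precisely the multiplier $\lambda = \frac{\partial \f^{n_t}(x)}{\partial x}\big|_{x = \f^{m_t}(r)}$ named in the statement.

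The key observation is then that for $y \in \p^t$ with $t \geq 1$, the quadratic remainder satisfies $y^2 Q(y) \in \p^{2t} \subseteq \p^{t+1}$, since $2t \geq t+1$. Thus modulo $\p^{t+1}$ only the constant and linear terms survive, giving
\[\f^{n_t}\bigl(\f^{m_t}(r) + y\bigr) \equiv \f^{n_t}\bigl(\f^{m_t}(r)\bigr) + \lambda y \pmod{\p^{t+1}}.\]
Setting $b := \f^{n_t}(\f^{m_t}(r)) - \f^{m_t}(r)$, the hypothesis that $r + \p^t$ has period type $(m_t, n_t)$ for $\f(x) + \p^t[x]$ forces $\f^{m_t}(r) + \p^t$ to be periodic of exact period $n_t$ in $R/\p^t$, hence $b \in \p^t$. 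Subtracting $\f^{m_t}(r)$ from the previous congruence then gives the desired identity $\f^{n_t}(\f^{m_t}(r) + y) - \f^{m_t}(r) \equiv \lambda y + b \pmod{\p^{t+1}}$.

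There is no substantive obstacle here; the whole lemma is essentially bookkeeping once the Taylor expansion is in place. I note in passing that the hypothesis $\nu(d) = 0$ does not actually enter this lemma; it presumably becomes relevant only in the follow-up results where the multiplier $\lambda$ needs to be a unit modulo $\p$, which in turn requires the degree $d$ to be invertible there.
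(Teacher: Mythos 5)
Your proof is correct and is essentially the same argument as the paper's: where you invoke the formal Taylor expansion $P(a+y) = P(a) + P'(a)y + y^2 Q(y)$ over $R$ and discard the quadratic tail using $y^2 \in \p^{2t} \subseteq \p^{t+1}$, the paper does precisely the same thing by hand, expanding $(l+y)^i \equiv l^i + i\,l^{i-1}y \pmod{\p^{t+1}}$ term by term and collecting the linear coefficient into $\lambda$. Your side remark that the hypothesis $\nu(d)=0$ is not actually used in this lemma is also accurate; it is carried in the statement for uniformity with the subsequent results in the subsection.
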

\begin{Proof}
		Let $(m_t,n_t)$ be the period type of $r+\p^{t}$ in $ R/(\p^t)$. Let $l=\f^{m_t}(r)\in R$.
		
		Write ${\f^{n_t}(x)=\underset{i=0}{\overset{d^{n_t}}{\sum}}a_i \cdot x^i}$ with $a_{d^{n_t}}=1$.		
		Note that $l$ is a fixed point for $\f^{n_t}(x)+\p^{t}[x]$ in $ R/(\p^t)$. In particular, $\underset{i=0}{\overset{d^{n_t}}{\sum}}a_i \cdot l^i=l+ b$ for some $b\in\p^t$. 
		We notice that $\lambda=\underset{i=1}{\overset{d^{n_t}}{\sum}}a_i\cdot i \cdot  l^{i-1}$. It follows that we get the following identity in $ R/(\p^{t+1})$ 
		\[ (\f^{n_t}(l+ y)-l)+\p^{t+1}=\underset{i=0}{\overset{d^{n_t}}{\sum}}a_i \cdot  (l+y)^i-l+\p^{t+1}.\]
		Since $y^2\in\p^{2t}\subseteq\p^{t+1}$, this yields that $(l+y)^i+\p^{t+1}=l^i+i\cdot l^{i-1}\cdot y+\p^{t+1}$. Thus,
		\begin{align*}
		    (\f^{n_t}(l+y)-l)+\p^{t+1}&=\underset{i=0}{\overset{d^{n_t}}{\sum}}a_i \cdot  (l^i+i\cdot l^{i-1}\cdot y)-l+\p^{t+1}=\left(\underset{i=0}{\overset{d^{n_t}}{\sum}}a_i \cdot  l^i-l\right)+\underset{i=1}{\overset{d^{n_t}}{\sum}}a_i \cdot  (i\cdot l^{i-1}\cdot y)+\p^{t+1}\\
		    &=b +\left( \underset{i=1}{\overset{d^{n_t}}{\sum}}a_i \cdot  i\cdot l^{i-1}\right) y+\p^{t+1}.
		\end{align*}
	\end{Proof}
Using the notation of Lemma \ref{lem1}, we may define $g_t: \p^t/\p^{t+1}\to \p^t/\p^{t+1}$ by $$g_t(y) =(\f^{n_t}(l+y)-l)+\p^{t+1}$$ where $l=\f^{m_t}(r)$ and ${\f^{n_t}(l)=l+b}$ for some $b\in\p^t$. In fact, we showed that $g_t(y)=\lambda y + b+\p^{t+1}$. 
	
	Using the previous lemma, we can now show that, under certain conditions, if $\widetilde{r}$ is strictly preperiodic for $\widetilde{\f}(x)$ in $k$ with tail length $m>0$, then $r+\p^t$ is strictly preperiodic for $\f(x)+\p^t[x]$ in $R/\p^t$ with tail length $m$ for all integers $t\geq1$.

	\begin{Lemma}
		\label{fixed tail}
		Let $\f(x)=x^d+c\in R[x]$ where $\nu\left(d\right)=0$. Let $r\in R$ be such that $\widetilde{r}$ is a strictly preperiodic point of $\widetilde{\f}(x)$ in $k$ with period type $(m,n)$. Set  $\lambda=\frac{\partial \f^{n}(x)}{\partial x}|_{x=\f^m(r)}$. If $\nu(\lambda)=0$, then $r+\p^{t}$ is strictly preperiodic with period type $(m,n_t)$ for $\f(x)+\p^{t}[x]$ for all $t\geq 1$ and some $n_t\geq 1$.
	\end{Lemma}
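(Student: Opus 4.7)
The plan is to proceed by induction on $t \ge 1$. The base case $t = 1$ is exactly the hypothesis: $\widetilde{r}$ has period type $(m, n)$ in $k$ under $\widetilde{\f}(x)$. For the inductive step, I would assume $r + \p^t$ is strictly preperiodic with period type $(m, n_t)$ for $\f(x) + \p^t[x]$, and aim to produce some $n_{t+1} \ge 1$ so that $r + \p^{t+1}$ has period type $(m, n_{t+1})$ for $\f(x) + \p^{t+1}[x]$.

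The heart of the argument is to apply Lemma~\ref{lem1} at level $t$. It supplies the affine model $g_t : \p^t/\p^{t+1} \to \p^t/\p^{t+1}$ given by $y \mapsto \lambda_{n_t} y + b$, where $\lambda_{n_t} := \frac{\partial \f^{n_t}(x)}{\partial x}\big|_{x = \f^m(r)}$ and $b \in \p^t$ measures the deviation of $\f^{n_t}(\f^m(r))$ from $\f^m(r)$ modulo $\p^{t+1}$. Once $\widetilde{\lambda}_{n_t} \in k^*$ is verified, $g_t$ becomes an affine bijection of a finite set, so every point is periodic; in particular there is $s \ge 1$ with $g_t^s(0) = 0$. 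Unwinding this identity gives
\[
\f^{s n_t}\!\left(\f^m(r)\right) \equiv \f^m(r) \pmod{\p^{t+1}},
\]
so $\f^m(r) + \p^{t+1}$ is periodic for $\f(x) + \p^{t+1}[x]$. Writing $(m_{t+1}, n_{t+1})$ for the period type of $r + \p^{t+1}$, this forces $m_{t+1} \le m$.

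The first subtlety is confirming $\nu(\lambda_{n_t}) = 0$; the hypothesis only supplies this for $\lambda = \frac{\partial \f^n(x)}{\partial x}\big|_{x = \f^m(r)}$. Since $\widetilde{\f^m(r)}$ has exact period $n$ in $k$ and period dividing $n_t$ modulo $\p^t$, reducing further to $k$ forces $n \mid n_t$, say $n_t = nk$. By the chain rule,
\[
\frac{\partial \f^{nk}(x)}{\partial x}\bigg|_{x = \f^m(r)} = \prod_{j=0}^{k-1} \frac{\partial \f^{n}(x)}{\partial x}\bigg|_{x = \f^{jn}(\f^m(r))},
\]
and each factor reduces modulo $\p$ to $\widetilde{\lambda}$, because $\widetilde{\f^{jn}(\f^m(r))} = \widetilde{\f^m(r)}$ and $(\f^n)'$ has coefficients in $R$. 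Hence $\widetilde{\lambda}_{n_t} = \widetilde{\lambda}^k \in k^*$, as required.

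The second subtlety is to rule out $m_{t+1} < m$. Here I would invoke the general monotonicity of tail length under passage to a coarser ideal: if $\f^{m'}(r) + \p^{t+1}$ were already periodic for some $m' < m$, then $\f^{m'}(r) + \p^t$ would be periodic as well, making the tail length of $r + \p^t$ at most $m' < m$, contradicting the inductive hypothesis. Combined with the upper bound $m_{t+1} \le m$ obtained above, this pins down $m_{t+1} = m$ and closes the induction. The genuinely delicate point is the chain-rule computation identifying $\widetilde{\lambda}_{n_t}$ with $\widetilde{\lambda}^k$; the remaining ingredients (Lemma~\ref{lem1}, affine bijectivity of $g_t$, and monotonicity of tail length) are formal.
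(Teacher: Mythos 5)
Your proof is correct and follows essentially the same route as the paper's: induction on $t$, the affine model $g_t$ from Lemma~\ref{lem1}, showing $\f^m(r)+\p^{t+1}$ is periodic, and combining that with the monotonicity $m_{t+1}\geq m$. One genuine improvement on your side: you explicitly verify, via $n\mid n_t$ and the chain rule
\[
\frac{\partial \f^{n_t}(x)}{\partial x}\Big|_{x=\f^m(r)}=\prod_{j=0}^{k-1}\frac{\partial \f^{n}(x)}{\partial x}\Big|_{x=\f^{jn+m}(r)},
\]
that the multiplier appearing in $g_t$ is a unit; the paper uses the same symbol $\lambda$ for both the hypothesis multiplier $\frac{\partial\f^n(x)}{\partial x}\big|_{x=\f^m(r)}$ and the $g_t$-multiplier $\frac{\partial\f^{n_t}(x)}{\partial x}\big|_{x=\f^m(r)}$, and silently assumes the latter lies in $R^*$ when splitting into the cases $\lambda=1$ and $\lambda\in R^*\setminus\{1\}$. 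Your identity $\widetilde{\lambda}_{n_t}=\widetilde{\lambda}^{\,k}$ supplies exactly the missing justification. The only other (cosmetic) difference is that you invoke bijectivity of $g_t$ on the finite set $\p^t/\p^{t+1}$ to get periodicity of $0$, whereas the paper explicitly computes an $s$ with $g_t^s(0)=0$ using the geometric-sum formula; both lead to the same conclusion.
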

\begin{Proof} 
We use an induction argument. 
		For $t=1$, the period type of $\widetilde{r}$ under $\widetilde{\f}(x)$ is $(m,n)$. Assume that $r+\p^{t}$ is strictly preperiodic for $\f(x)+\p^{t}[x]$ with period type $(m,n_t)$. 
		Since $ R/\p^{t+1}$ is finite, $r+\p^{t+1}$ will remain preperiodic, say with period type $(m_{t+1},n_{t+1})$. We want to show that the tail length $m_{t+1}=m$.
		Since $m_{t+1}\geq m$, it suffices to show that $\f^m(r)+\p^{t+1}$ is periodic modulo $ \p^{t+1}$.
		
Setting		
		$l=\f^m(r),\ \f^{n_t}(l)=l+b$ where $b\in\p^t$,$\ g_t(y)=(\f^{n_t}(l+y)-l)+\p^{t+1}=\lambda y + b+\p^{t+1}$, one gets
		$\O_{g_t}(0)=\{0, b+\p^{t+1}, b(1+\lambda)+\p^{t+1},b(1+\lambda+\lambda^2)+\p^{t+1},\dots\}$. 
		If $\lambda=1$, then $ (\underset{p-times}{\underbrace{1+\lambda+\dots}})=p$ where $p=\Char k$. This means that $(\underset{p-times}{\underbrace{1+\lambda+\dots}})\in\p$ and since $b\in\p^{t}$, it follows that $g_t^p(0)\in \p^{t+1}$
		If $\lambda\in R^*\setminus\{1\}$, then there is a positive integer $s$ such that $\lambda^s+\p=1+\p$. So, $g_t^{s-1}(0)=b(1+\lambda+\dots+\lambda^{s-1})+\p^{t+1}=b\cdot \frac{1-\lambda^{s}}{1-\lambda}+\p^{t+1}\in \p^{t+1}$. This implies that $0$ is always a periodic point of $g_t$. 
		
		Claim: $g_t^\alpha(0)=(\f^{\alpha\cdot n_t+m}(r)-\f^m(r))+\p^{t+1}$ for any $\alpha\geq 1$.
		
		The proof of the claim is by induction. For $\alpha=1$, the claim is clear. Assume the claim holds for all $\alpha\le \alpha_0$. Now,
		\begin{eqnarray*}
		g_t^{\alpha_0+1}(0)&=&g_t(g_t^{\alpha_0}(0))=g_t\left(\f^{\alpha_0\cdot n_t+m}(r)-\f^m(r)\right)+\p^{t+1}\\
		&=&\left(\f^{n_t}\left(\f^m(r)+\left(\f^{\alpha_0\cdot n_t+m}(r)-\f^m(r)\right)\right)-\f^m(r)\right)+\p^{t+1}\\
		&=&(\f^{n_t}(\f^{\alpha_0\cdot n_t+m}(r))-\f^m(r))+\p^{t+1}\\
		&=&\left(\f^{(\alpha_0+1)\cdot n_t+m}(r)-\f^m(r)\right)+\p^{t+1}.
		\end{eqnarray*}
		It follows that if $\alpha$ is the period of $0$ under the iteration of $g_t$, then
		\[(\f^{\alpha\cdot n_t+m}(r)-\f^m(r))+\p^{t+1}=\p^{t+1}.\]
In other words,
		$\f^m(r)+\p^{t+1}\text{ is periodic for } \f(x)+\p^{t+1}[x].$
\end{Proof}	
	
\begin{Corollary}
		Let $\f(x)=x^d+c\in R[x]$ where $\nu\left(d\right)=0$. Assume, moreover, that $\widetilde{0}$ is a strictly preperiodic point of $\widetilde{\f}(x)\in k [x]$. Then for any $r\in R$ such that $\widetilde{r}$ is a strictly preperiodic point of $\widetilde{\f}(x)\in k[x]$ with period type $(m,n)$, $r+\p^{t}$ is a strictly preperiodic point of $\f(x)+\p^{t}[x]$ with period type $(m,n_t)$ for all $t\geq 1$ and some $n_t\geq 1$.
	\end{Corollary}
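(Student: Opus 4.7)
The plan is to reduce the corollary directly to Lemma \ref{fixed tail} by verifying its one nontrivial hypothesis, namely that $\nu(\lambda)=0$ for $\lambda=\frac{\partial \f^{n}(x)}{\partial x}|_{x=\f^m(r)}$.

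First, I would compute $\lambda$ explicitly via the chain rule applied to the composition $\f^{n}=\f\circ\f\circ\cdots\circ\f$. Since $\f'(x)=d\,x^{d-1}$, this gives
\[
\lambda\;=\;\prod_{i=0}^{n-1}\f'\bigl(\f^{m+i}(r)\bigr)\;=\;d^{n}\prod_{i=0}^{n-1}\bigl(\f^{m+i}(r)\bigr)^{d-1}.
\]
Because $\nu(d)=0$ by assumption, $d^{n}$ is a unit, and hence
\[
\nu(\lambda)\;=\;(d-1)\sum_{i=0}^{n-1}\nu\bigl(\f^{m+i}(r)\bigr).
\]
So it suffices to show that $\nu(\f^{m+i}(r))=0$ for every $0\le i\le n-1$, i.e.\ that no element of the periodic cycle of $\widetilde{r}$ under $\widetilde{\f}$ reduces to $\widetilde{0}$.

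The key observation is that the elements $\widetilde{\f^{m}(r)},\widetilde{\f^{m+1}(r)},\ldots,\widetilde{\f^{m+n-1}(r)}$ form a periodic cycle of $\widetilde{\f}$ in $k$ of exact period $n$, since $(m,n)$ is the period type of $\widetilde{r}$. If any of them were equal to $\widetilde{0}$, then $\widetilde{0}$ itself would lie on this cycle and would therefore be a periodic point of $\widetilde{\f}$, contradicting the hypothesis that $\widetilde{0}$ is strictly preperiodic for $\widetilde{\f}$. Hence $\widetilde{\f^{m+i}(r)}\neq \widetilde{0}$ for all such $i$, which forces $\nu(\f^{m+i}(r))=0$ and thus $\nu(\lambda)=0$.

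With $\nu(\lambda)=0$ established, Lemma \ref{fixed tail} applies verbatim and yields the conclusion that $r+\p^{t}$ is strictly preperiodic for $\f(x)+\p^{t}[x]$ with period type $(m,n_t)$ for every $t\ge 1$. The only real content of the corollary is the passage from the global hypothesis ``$\widetilde{0}$ strictly preperiodic'' to the pointwise unit condition on the multiplier, and this passage is the one-line argument ruling out $\widetilde{0}$ from the cycle; I do not anticipate any substantive obstacle.
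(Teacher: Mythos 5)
Your proposal is correct and follows the same route as the paper: compute the multiplier via the chain rule to obtain $\lambda=d^{n}\prod_{i=0}^{n-1}(\f^{m+i}(r))^{d-1}$, note that $\nu(d)=0$ kills the $d^n$ factor, and observe that $\nu(\f^{m+i}(r))>0$ for some $i$ would force $\widetilde{0}$ onto the periodic cycle of $\widetilde{r}$, contradicting the hypothesis that $\widetilde{0}$ is strictly preperiodic. The conclusion then follows from Lemma \ref{fixed tail}, exactly as in the paper.
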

	\begin{Proof}
	This is a direct consequence of the fact that $\lambda=d^{n}\underset{i=0}{\overset{n-1}{\prod}}(\f^{m+i}(r))^{d-1}$. If $\nu(\lambda)\neq 0$, then either $\nu\left(d\right)>0$ contradicting the hypothesis; or $\nu\left(\f^{m+i}(r)\right)>0$ for some $0\leq i\leq n-1$ which means $\f^{m+i}(r)\in\p$. In the latter case, $\widetilde{\f}^{m+i}(\widetilde{r})=\widetilde{0}$ is a periodic point for $\widetilde{\f}(x)$ contradicting the assumption.
	\end{Proof}
	
	\begin{Theorem}
	\label{last_preper}
		Let $\f(x)=x^d+c\in R[x]$ where $\nu\left(d\right)=0$. Let $r\in R$ be such that $\widetilde{r}$ is a strictly preperiodic point of $\widetilde{\f}(x)\in k[x]$ with period type $(m,n)$, and $\nu(\lambda)=0$ where $\lambda=\frac{\partial \f^{n}(x)}{\partial x}|_{x=\f^m(r)}$. Then either $r$ is a wandering point; or $r$ is preperiodic of type $(m,l)$ where $l=n$, $l=ns$ or $l=nsp^e$ with $s=\ord_{ k^*}(\lambda)$ and $e\ge 0$.
	\end{Theorem}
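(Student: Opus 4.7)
The plan is to track how the period $n_t$ of $r+\p^t$ under $\f(x)+\p^t[x]$ evolves as $t$ increases, using the linearization from Lemma~\ref{lem1}. If $r$ is wandering we are done, so assume $r$ is preperiodic in $R$, say of period type $(m',l')$. Since any global period yields a period modulo $\p^t$, we have $n_t\mid l'$; combined with Lemma~\ref{fixed tail}, which forces the tail length to be exactly $m$ modulo every $\p^t$, a short argument shows $m'=m$ and $l'=\lim_t n_t$ (the sequence $n_t$ divides $l'$ and is non-decreasing under divisibility, so it stabilizes). It therefore suffices to classify the possible stable values of $n_t$.

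Set $\lambda_t=\frac{\partial \f^{n_t}(x)}{\partial x}\big|_{x=\f^m(r)}$ and pick $b_t\in\p^t$ with $\f^{n_t}(\f^m(r))-\f^m(r)\equiv b_t\pmod{\p^{t+1}}$. By Lemma~\ref{lem1}, the map $g_t$ on $\p^t/\p^{t+1}$ sends $y$ to $\lambda_t y+b_t+\p^{t+1}$. Reading off the orbit of $0$ exactly as in the proof of Lemma~\ref{fixed tail}, the ratio $\alpha_t:=n_{t+1}/n_t$ equals $1$ if $b_t\in\p^{t+1}$; equals $s_t:=\ord_{k^*}(\widetilde{\lambda_t})$ if $b_t\notin\p^{t+1}$ and $\widetilde{\lambda_t}\neq 1$; and equals $p=\Char k$ if $b_t\notin\p^{t+1}$ and $\widetilde{\lambda_t}=1$.

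The crucial ingredient is how $\widetilde{\lambda_t}$ evolves. Since $\f^m(r)$ has exact period $n_t$ modulo $\p^t$, we have $\f^{in_t}(\f^m(r))\equiv \f^m(r)\pmod{\p^t}$ for every $i$; writing $\f^{n_{t+1}}=(\f^{n_t})^{\alpha_t}$ and applying the chain rule gives $\lambda_{t+1}\equiv \lambda_t^{\alpha_t}\pmod{\p^t}$, whence $\widetilde{\lambda_{t+1}}=\widetilde{\lambda_t}^{\alpha_t}$. In each of the three cases above one checks $\widetilde{\lambda_t}^{\alpha_t}\in\{\widetilde{\lambda_t},1\}$, so that once $\widetilde{\lambda_t}$ hits $1$ (which happens precisely when $\alpha_t=s_t$), it remains $1$ forever after, and the only admissible subsequent factors are $1$ and $p$.

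Putting the pieces together: if $\widetilde{\lambda}=1$ (so $s=1$), then each $\alpha_t\in\{1,p\}$ and $l'=np^e=nsp^e$ for some $e\ge 0$. If $\widetilde{\lambda}\neq 1$, the sequence $\widetilde{\lambda_t}$ equals $\widetilde{\lambda}$ until a first step $T$ at which $\alpha_T=s$, and equals $1$ thereafter; if no such $T$ exists then $l'=n$, otherwise $n_{T+1}=ns$ and from that point on each factor is $1$ or $p$, giving $l'=nsp^e$ for some $e\ge 0$ (with $e=0$ recovering $l'=ns$). The main obstacle is the chain-rule identity $\widetilde{\lambda_{t+1}}=\widetilde{\lambda_t}^{\alpha_t}$ together with the observation that $\widetilde{\lambda_t}=1$ is an absorbing state; once these are in hand, the case analysis is pure bookkeeping.
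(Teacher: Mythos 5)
Your proof is correct, but it takes a genuinely different route from the paper's. After reducing to the case where $r$ is preperiodic of type $(m', l')$ and showing $m'=m$, the paper simply invokes \cite[Theorem 2.21]{dynamicsbook} (Silverman's reduction theorem for periodic points) to obtain $l'\in\{n,\,ns,\,nsp^e\}$ from the period $n$ and multiplier $\widetilde{\lambda}$ of the reduced cycle. You instead re-derive this classification from first principles, tracking the period $n_t$ of $r+\p^t$ through the linearization $g_t(y)=\lambda_t y+b_t+\p^{t+1}$ of Lemma~\ref{lem1}. The transition rule $\alpha_t:=n_{t+1}/n_t\in\{1,\,s_t,\,p\}$ you read off is correct (note $n_t\mid n_{t+1}$ because a period modulo $\p^{t+1}$ remains one modulo $\p^t$, and the Claim inside the proof of Lemma~\ref{fixed tail} identifies the period of $0$ under $g_t$ with $n_{t+1}/n_t$). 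The chain-rule identity is the decisive ingredient: writing $\lambda_{t+1}=\prod_{i=0}^{\alpha_t-1}(\f^{n_t})'\bigl(\f^{in_t}(\f^m(r))\bigr)$ and using $\f^{in_t}(\f^m(r))\equiv\f^m(r)\pmod{\p^t}$ together with the fact that $(\f^{n_t})'\in R[x]$ preserves congruences gives $\lambda_{t+1}\equiv\lambda_t^{\alpha_t}\pmod{\p^t}$, hence $\widetilde{\lambda_{t+1}}=\widetilde{\lambda_t}^{\alpha_t}$, and the absorbing-state observation is immediate. The stabilization $l'=\lim_t n_t$ also holds since the finitely many distinct points $\f^{m+i}(r)$, $0\le i<l'$, become distinct modulo $\p^t$ for $t$ large. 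So your argument is sound and self-contained, at the cost of essentially reproving (a special case of) Silverman's theorem rather than citing it. Two small things to tighten: (a) the step ``a short argument shows $m'=m$'' should be fleshed out — as in the paper, if $m'>m$ one sets $t_0=\nu(\f^{m+l'}(r)-\f^m(r))$, notes $0<t_0<\infty$ (the lower bound needs $n\mid l'$, which follows from $n=n_1\mid l'$), and deduces that $\f^m(r)+\p^{t_0+1}$ is not periodic, contradicting Lemma~\ref{fixed tail}; (b) it is worth stating explicitly that $k$ is finite (so $s=\ord_{k^*}(\widetilde{\lambda})$ exists and is coprime to $p$), though the paper makes the same standing assumption.
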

	\begin{Proof}
	Assume that $r$ is not a wandering point.  Thus, $r$ is a preperiodic point of type $(m_R,l)$. for $\f(x)$. In particular, $\f^{m_R}(r)$ is periodic, and so $\widetilde{\f}^{m_R}(\widetilde{r})$ is periodic in $k$. 
	
	It is clear that it cannot be the case that $m_R<m$. Therefore, we assume that $m_R>m$. It follows that $\f^m(r)$ is not periodic in $R$. This implies that $0<\nu(\f^{m+l}(r)-\f^{m}(r))=t_0<\infty$, where the first inequality is due to the fact that $\widetilde{r}$ has period type $(m,n)$. In particular, we have that $\f^{m+l}(r)- \f^m(r)\not\in\p^{t_0+1}$. It follows that $\f^m(r)+\p^{t_0+1}$ is not a periodic point for $\f(x)+\p^{t+1}[x]$. That in turn implies that the tail length of the orbit of $r+\p^{t_0+1}$ is strictly greater than $m$, contradicting Lemma \ref{fixed tail}. We conclude that $m_R=m$.
		
		If the period type of $r$ in $ R$ is $(m,l)$, then $\f^m(r)$ is a periodic point with period $l$ in $ R$ and its reduction $\widetilde{\f}^m(\widetilde{r})$ is periodic with period $n$ in $k$. By \cite[Theorem 2.21]{dynamicsbook}, we have that $l=n,\ \ l=ns,\ or\ l=nsp^e$.
	\end{Proof}
	
	\begin{Corollary}
		Let $\f(x)=x^d+c\in\mathbb{Z}_p[x]$, where $p$ is a prime such that $p\nmid 6d$. If $\widetilde{r}$ is strictly preperiodic for $\widetilde{\f}(x)\in\Fp[x]$ with period type $(m,n)$ and multiplier $\lambda\in\Z_p^*$, then either $r$ is a wandering point; or $r$ is preperiodic of type $(m,l)$ where $l=n$ or $l=ns$ for $s=\ord_p(\lambda)|(p-1)$.
	\end{Corollary}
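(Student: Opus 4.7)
The plan is to realize this corollary as a direct specialization of Theorem~\ref{last_preper}. Here $R=\Z_p$, the valuation $\nu$ is the $p$-adic valuation, and $k=\Fp$. The hypothesis $p\nmid d$ is precisely $\nu(d)=0$, and $\lambda\in\Z_p^*$ is precisely $\nu(\lambda)=0$; hence Theorem~\ref{last_preper} applies and yields that $r$ is either wandering or preperiodic of type $(m,l)$ with $l\in\{n,\,ns,\,nsp^e\}$ for some $e\ge 0$, where $s=\ord_{\Fp^*}(\lambda)$. The divisibility $s\mid(p-1)$ is then immediate from Lagrange's theorem applied to the cyclic group $\Fp^*$.

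It remains to exclude the case $l=nsp^e$ with $e\ge 1$, and this is where the extra hypothesis $p\nmid 6$ (equivalently $p\ge 5$) enters. I would argue as follows. Set $y=\f^m(r)$ and $\mu=\lambda^s\in 1+p\Z_p$. Iterating the linearization of Lemma~\ref{lem1} $p^e$ times around $y$, the displacement $\f^{nsp^e}(y)-y$ expands as a geometric-type sum whose leading contribution is $(1+\mu+\mu^2+\cdots+\mu^{p^e-1})\cdot b = \frac{\mu^{p^e}-1}{\mu-1}\cdot b$, where $b=\f^{ns}(y)-y\in p\Z_p$, modulo higher-order binomial error terms. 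For $p$ odd, the lifting-the-exponent identity $\nu(\mu^{p^e}-1)=\nu(\mu-1)+e$ (valid once $\nu(\mu-1)\ge 1$) together with bounds on the binomial remainders force $\mu=1$ in $\Z_p$, collapsing the period back to $ns$. The primes $p=2,3$ are excluded precisely because the associated $p$-adic binomial expansions can then fail to gain valuation at the required rate, invalidating the inductive step.

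The main obstacle is the precise $p$-adic bookkeeping eliminating the case $e\ge 1$; the rest of the corollary is a clean application of Theorem~\ref{last_preper} plus the trivial group-theoretic observation that the order of an element of $\Fp^*$ divides $p-1$.
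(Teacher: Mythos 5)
You diverge from the paper at the decisive step. Having reduced, via Theorem~\ref{last_preper}, to ruling out the alternative $l=nsp^e$ for the periodic point $\f^m(r)$, the paper simply cites \cite[Theorem 2.28]{dynamicsbook}, which covers precisely $K=\Q_p$ with $p\nmid 6$; the assertion $s\mid(p-1)$ is, as you note, just Lagrange in $\Fp^*$. You instead attempt a self-contained $p$-adic rederivation. The idea is sound and would buy self-containment, but your sketch has a genuine gap and a misstated conclusion.

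The gap: the ``higher-order binomial error terms'' you set aside are precisely where the hypothesis $p\nmid 6$ does its work, and you do not verify them. Put $g=\f^{ns}$, $y=\f^m(r)$, $b=g(y)-y\in p\Z_p$, $\mu=g'(y)\in 1+p\Z_p$, and iterate $h_{k+1}=b+\mu h_k+c_2h_k^2+\cdots$. Grouping $h_p$ by powers of $b$, the linear part is $b(1+\mu+\cdots+\mu^{p-1})$, of valuation $\nu(b)+1$, while the quadratic part is $c_2b^2\sum_{k=0}^{p-1}\mu^{p-1-k}(1+\mu+\cdots+\mu^{k-1})^2$, whose inner sum is $\equiv\sum_{k=0}^{p-1}k^2=\tfrac{(p-1)p(2p-1)}{6}\pmod p$, divisible by $p$ exactly when $p\nmid 6$. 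Without that extra factor of $p$ (so for $p=2,3$) the quadratic part can sit at valuation $2\nu(b)$, tying the linear part when $\nu(b)=1$, and the inductive claim $\nu\bigl(g^{p^e}(y)-y\bigr)=\nu(b)+e$ fails; this is the computation your proof must actually carry out rather than assert. Separately, your conclusion that the bookkeeping ``force[s] $\mu=1$ in $\Z_p$'' is not the contradiction to aim for: LTE gives $\nu\bigl(\tfrac{\mu^{p^e}-1}{\mu-1}\bigr)=e$ whether or not $\mu=1$, so nothing forces $\mu=1$. Once the errors are shown subordinate, the correct endpoint is that $\nu\bigl(\f^{nsp^e}(\f^m(r))-\f^m(r)\bigr)=\nu(b)+e<\infty$, which contradicts $\f^m(r)$ having exact period $nsp^e$ with $e\ge1$, and that is what discards the $p^e$-branch of Theorem~\ref{last_preper}.
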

	\begin{Proof}
		In the case of $K=\Q$ and $p\nmid6$, the possibility $l=nsp^{e}$ appearing in Theorem \ref{last_preper} does not occur, see \cite[Theorem 2.28]{dynamicsbook}. 
	\end{Proof}
	
	The latter corollary can be found in \cite[Theorem 1.1]{Mullen} in the case $d=2$ and $r=0$ when $p\ne3$. 

\subsection{Preperiodic critical orbits}

In this subsection, we consider the critical orbit of $\f(x)$. If $\f(x)\in R[x]$ is such that $\widetilde{\f}(x)\in k[x]$ possesses a periodic critical orbit in $k$, we show that there are only two possibilities for the structure of the critical orbit of $\f(x)$ in $R$.

\begin{Lemma}
		\label{step1_periodic}
		Let $\f(x)=x^d+c\in R[x]$ where $\nu\left(d\right)=0$. If $\widetilde{0}$ is a periodic point for $\widetilde{\f}(x)\in k[x]$ with exact period $n$, then either $0$ is a wandering point for $\f(x)$; or $0$ is a preperiodic point of $\f(x)$ with period type $(m,n)$ for some $m\ge0$.
	\end{Lemma}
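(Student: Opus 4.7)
The plan is to show that if $0$ is not wandering and is preperiodic of type $(m,l)$ in $R$, then necessarily $l=n$. Unlike the preceding results in this subsection, here the multiplier at the periodic point is \emph{not} a unit: because $\widetilde{0}$ itself sits on its own periodic cycle, the chain rule forces $(\widetilde{\f}^{n})'(\widetilde{0})=0$. So Theorem \ref{last_preper} does not apply. The trade-off is favorable, however: vanishing of the multiplier gives a \emph{super-contraction} estimate that pins down the period directly.

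First, suppose $0$ is preperiodic of type $(m,l)$ and set $y=\f^{m}(0)$. Since $\widetilde{y}$ lies on the $\widetilde{\f}$-cycle of $\widetilde{0}$, which has exact period $n$, the point $\widetilde{y}$ also has exact period $n$, and $\widetilde{\f}^{l}(\widetilde{y})=\widetilde{y}$ yields $n\mid l$. Write $l=nq$. The goal is to force $q=1$.

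Next, the chain rule gives
\[
(\f^{n})'(y)=d^{n}\prod_{i=0}^{n-1}\bigl(\f^{i}(y)\bigr)^{d-1}.
\]
As $\widetilde{y}$ lies on the cycle through $\widetilde{0}$, some index $0\le i_{0}<n$ satisfies $\widetilde{\f}^{i_{0}}(\widetilde{y})=\widetilde{0}$, i.e. $\f^{i_{0}}(y)\in\p$. Since $\nu(d)=0$, this forces $(\f^{n})'(y)\in\p$. The same conclusion holds with $y$ replaced by any $z\in R$ with $\widetilde{z}=\widetilde{y}$, because derivatives are preserved under reduction. Expanding the polynomial $\f^{n}(z+h)-\f^{n}(z)=(\f^{n})'(z)\,h+h^{2}\xi$ with $\xi\in R$, we obtain the super-contraction estimate
\[
\nu\!\bigl(\f^{n}(z+h)-\f^{n}(z)\bigr)\ \ge\ \nu(h)+1\qquad\text{whenever }\nu(h)\ge 1.
\]

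Finally, define $z_{i}=(\f^{n})^{i}(y)$, so $z_{0}=z_{q}=y$, and $z_{0},\dots,z_{q-1}$ are pairwise distinct because $y$ has exact period $nq$. All $z_{i}$ reduce to $\widetilde{y}$, so each difference $z_{i}-z_{i-1}$ lies in $\p$; let $t_{i}=\nu(z_{i}-z_{i-1})\ge 1$. Applying the super-contraction estimate iteratively gives $t_{i+1}\ge t_{i}+1$, hence $t_{1}<t_{2}<\cdots<t_{q}$. From the telescoping identity $z_{q}-z_{0}=\sum_{i=1}^{q}(z_{i}-z_{i-1})$ and the non-archimedean property with uniquely attained minimum valuation, we would get $\nu(z_{q}-z_{0})=t_{1}<\infty$ if $q>1$, contradicting $z_{q}-z_{0}=0$. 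Thus $q=1$ and $l=n$.

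The main obstacle is precisely the unit-multiplier gap between this lemma and the earlier ones; my only real work is to identify the critical factor $\f^{i_{0}}(y)\in\p$ that makes the multiplier vanish and then to package the resulting super-contraction into the clean ultrametric telescoping contradiction above. I expect no additional hypotheses beyond $\nu(d)=0$ to be needed.
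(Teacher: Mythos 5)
Your proof is correct, and it takes a genuinely different (and more self-contained) route than the paper's. The paper first picks $u \ge m/n$ so that $\f^{un}(0)$ is periodic with exact period $l$, then invokes \cite[Theorem 2.21]{dynamicsbook} to conclude $l\in\{n,\,ns,\,nsp^e\}$ with $s=\ord_{k^*}(\lambda)$, and finally observes that $\f^{un}(0)$ (which lies in $\p$) divides the multiplier $\lambda$, so $\lambda\in\p$ and $s$ is infinite; the only finite option left is $l=n$. You instead reprove the relevant zero-multiplier case of that cited classification from first principles: you locate the factor $\f^{i_0}(y)\in\p$ that kills $(\f^n)'$ at every lift of $\widetilde{y}$, derive the super-contraction $\nu\bigl(\f^n(z+h)-\f^n(z)\bigr)\ge\nu(h)+1$ from the Taylor expansion, and then run it along the $\f^{n}$-orbit $z_0,\dots,z_q=z_0$ of $y=\f^m(0)$ to get strictly increasing finite valuations $t_1<\dots<t_q$, which contradicts the telescoping identity $0=z_q-z_0$ by the ultrametric triangle inequality once $q>1$. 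The two arguments establish the same statement under the same hypothesis $\nu(d)=0$: the paper's is shorter at the price of a black-box reference, while yours is longer but self-contained and makes the underlying mechanism explicit --- the vanishing multiplier forces strict contraction modulo $\p$, so the period of $0$ cannot exceed that of $\widetilde{0}$.
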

\begin{Proof}
		We assume that $0$ is not a wandering point. Thus, $\f^m(0)$ is a periodic point for some $m\ge 0$. We will show that the period of $\f^m(0)$ is exactly $n$. Assume that $\f^m(0)$ has period $l$. We choose an integer $u$ such that $u\geq \frac{m}{n}$. Then, $\f^{un}(0)$ is also periodic with exact period $l$.
		
		Since $\widetilde{0}$ is a periodic point for $\widetilde{\f}(x)\in k[x]$ with period $n$, then $\widetilde{\f}^{un}(\widetilde{0})=\widetilde{0}$, hence $\widetilde{\f}^{un}(\widetilde{0})$ has period $n$. In virtue of \cite[Theorem 2.21]{dynamicsbook}, we have $l=n$, $l=ns$ or $l=nsp^e$, where $s=\ord_{k^*}(\lambda)$ and $\lambda=\frac{\partial \f^l(x))}{\partial x}|_{x=\f^{un}(0)}$. Since $\f^{un}(0)$ is a factor of the multiplier $\lambda$, it follows that $\lambda\in\p$. This means that $s=\infty$ contradicting the assumption that $0$ is not a wandering point. Therefore, $l=n$.
	\end{Proof}
In the following lemma, we mark some facts about the valuations of differences of the form $\f^{l_1 n+a}(0)-\f^{l_2 n+a}(0)$.

\begin{Lemma}
	\label{valuation lemma}
		Let $\f(x)=x^d+c\in R[x]$ where $\nu\left(d\right)=0$.
		Let $\widetilde{0}$ be a periodic point for $\widetilde{\f}(x)\in k[x]$ with exact period $n$. Then, for all $m\geq 1$, we have that
		\begin{enumerate}
		    \item \label{1} $\nu\left(\f^{mn+a}(0)-\f^{(m-1)n+a}(0)\right)=\nu\left(\f^{mn+1}(0)-\f^{(m-1)n+1}(0)\right)$,  where $n\ge a >0$.
		    \item\label{2} $\left(\f^{mn}(0)-\f^{(m-1)n}(0)\right)$ divides $\left(\f^{mn+1}(0)-\f^{(m-1)n+1}(0)\right)$.
		\end{enumerate}
	\end{Lemma}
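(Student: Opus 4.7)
The plan is to apply the elementary factorization $A^d - B^d = (A-B) \cdot S$ with $S = \sum_{i=0}^{d-1} A^i B^{d-1-i}$ to the pair $A = \f^{mn+a-1}(0)$, $B = \f^{(m-1)n+a-1}(0)$. Since $\f(x) = x^d + c$, we have
\[
\f^{mn+a}(0) - \f^{(m-1)n+a}(0) = \bigl(\f^{mn+a-1}(0)\bigr)^d - \bigl(\f^{(m-1)n+a-1}(0)\bigr)^d = \bigl(\f^{mn+a-1}(0) - \f^{(m-1)n+a-1}(0)\bigr) \cdot S_a,
\]
where $S_a = \sum_{i=0}^{d-1} \bigl(\f^{mn+a-1}(0)\bigr)^i \bigl(\f^{(m-1)n+a-1}(0)\bigr)^{d-1-i} \in R$.

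Part \eqref{2} is the case $a=1$ of this identity: it shows $\f^{mn+1}(0) - \f^{(m-1)n+1}(0) = \bigl(\f^{mn}(0) - \f^{(m-1)n}(0)\bigr) S_1$ as elements of $R$, which is the claimed divisibility.

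For part \eqref{1}, the identity yields the recurrence
\[
\nu\bigl(\f^{mn+a}(0) - \f^{(m-1)n+a}(0)\bigr) = \nu\bigl(\f^{mn+a-1}(0) - \f^{(m-1)n+a-1}(0)\bigr) + \nu(S_a),
\]
so it suffices to show $\nu(S_a) = 0$ for every $a$ with $2 \le a \le n$, and then iterate down to $a=1$. Reducing $S_a$ modulo $\p$ and using that $\widetilde{0}$ has exact period $n$ under $\widetilde{\f}$, both $\f^{mn+a-1}(0)$ and $\f^{(m-1)n+a-1}(0)$ have the same reduction $\widetilde{\f}^{a-1}(\widetilde{0})$, so
\[
S_a \equiv d \cdot \bigl(\widetilde{\f}^{a-1}(\widetilde{0})\bigr)^{d-1} \pmod{\p}.
\]
The crux is that this residue is nonzero: $\nu(d)=0$ by hypothesis, and $\widetilde{\f}^{a-1}(\widetilde{0}) \ne \widetilde{0}$ because the exact period of $\widetilde{0}$ is $n$, so $\widetilde{\f}^{a-1}(\widetilde{0}) = \widetilde{0}$ would force $n \mid (a-1)$, which is excluded for $1 \le a-1 \le n-1$. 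Hence $S_a \in R^*$ and $\nu(S_a)=0$, which completes the induction.

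The only step that requires any care is the reduction computation for $S_a$; everything else is formal bookkeeping. This is not really an obstacle so much as the one point where both hypotheses, $\nu(d)=0$ and the exactness of the period $n$ of $\widetilde{0}$, are actually used, so the write-up should make clear where each is invoked.
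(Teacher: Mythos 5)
Your proof is correct and rests on the same core observation as the paper's: the factor relating consecutive differences reduces modulo $\p$ to $d\cdot\bigl(\widetilde{\f}^{a-1}(\widetilde{0})\bigr)^{d-1}$, which is a unit because $\nu(d)=0$ and the exactness of the period $n$ forces $\widetilde{\f}^{a-1}(\widetilde{0})\neq\widetilde{0}$ for $1\le a-1\le n-1$. The only cosmetic difference is that you derive both parts from the single exact identity $A^d-B^d=(A-B)\sum_{i=0}^{d-1}A^iB^{d-1-i}$ and then reduce the cofactor modulo $\p$, whereas the paper proves part \eqref{1} by expanding $(B+\beta)^d-B^d$ and truncating modulo $\p^{\alpha+1}$ (where $\alpha=\nu(\beta)$) and proves part \eqref{2} with a separate binomial-coefficient factorization; your version is a slightly cleaner, unified write-up of the same argument.
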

	\begin{Proof}
	    Let $n>a\ge 1$. Set $\beta:=\f^{mn+a}(0)-\f^{(m-1)n+a}(0)$. Since $\widetilde{0}$ is periodic with period $n$, it follows that $\beta\in\p$. 
		We also remark that since $n\nmid ((m-1)n+a)$, we have $\f^{(m-1)n+a}(0)\not\in\p$. Let $\alpha\geq1$ be such that $\beta\in\p^\alpha\setminus\p^{\alpha+1}$.
		
		Now, we consider the following identities 
		\begin{align*}
		\f^{mn+a+1}(0)-\f^{(m-1)n+a+1}(0)+\p^{\alpha+1}&=\left[\f^{mn+a}(0)\right]^d-\left[\f^{(m-1)n+a}(0)\right]^d+\p^{\alpha+1}\\
		&=\left[\f^{(m-1)n+a}(0)\right]^d+\beta\cdot d\cdot \left[\f^{(m-1)n+a}(0)\right]^{d-1}-\left[\f^{(m-1)n+a}(0)\right]^d+\p^{\alpha+1}\\
		&=\beta\cdot d\cdot \left[\f^{(m-1)n+a}(0)\right]^{d-1}+\p^{\alpha+1},
		\end{align*}
		where the second identity is due to the fact that $\left[\f^{mn+a}(0)\right]^d=\left[\f^{(m-1)n+a}(0)+\beta\right]^d$ and that $\beta^2\in\p^{\alpha+1}$.
		
		Since $d\cdot\f^{(m-1)n+a}(0)^{d-1}\not\in\p$ and $\beta\in\p^\alpha\setminus\p^{\alpha+1}$, it follows that 
		\[\f^{mn+a+1}(0)-\f^{(m-1)n+a+1}(0)\in\p^{\alpha}\setminus\p^{\alpha+1}\]
		which implies that 
		\[\nu\left(\f^{mn+a+1}(0)-\f^{(m-1)n+a+1}(0)\right)=\nu(\beta)=\nu\left(\f^{mn+a}(0)-\f^{(m-1)n+a}(0)\right) \quad\textrm{for any }a,\, n>a>0.\]

		For (\ref{2}), we write $\f^{(m-1)n}(0)=t_1$ and  $\left(\f^{mn}(0)-\f^{(m-1)n}(0)\right)=t_2$. Now, we have		
		\begin{align*}
		\f^{mn+1}(0)-\f^{(m-1)n+1}(0)&=\left[\f^{mn}(0))\right]^d-\left[\f^{(m-1)n}(0)\right]^d&=(t_1+t_2)^d-t_1^d&=\underset{i=1}{\overset{d}{\sum}}\binom{d}{i}t_2^i t_1^{d-i}\\
		&=t_2 \cdot  \underset{i=1}{\overset{d}{\sum}}\binom{d}{i}t_2^{i-1} t_1^{d-i}.
		\end{align*}
		This concludes the proof.
	\end{Proof}

	\begin{Lemma}
	\label{mn+1 lemma}
		Let $\f(x)=x^d+c\in R[x]$ where $\nu\left(d\right)=0$. Assume that $\widetilde{0}$ is a  periodic point for $\widetilde{\f}(x)\in k[x]$ with period $n$. If $0$ is a preperiodic point for $\f(x)$, then its period type must be $(mn+1,n)$ for some integer $m\geq1$.
	\end{Lemma}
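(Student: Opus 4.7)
The plan is to combine Lemma \ref{step1_periodic} with the block-monotonicity structure encoded in Lemma \ref{valuation lemma}, and then to pin down the tail length by computing one explicit valuation.

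First, by Lemma \ref{step1_periodic}, the exact period of $0$ under $\f$ equals the residue period $n$, so its period type is $(M,n)$ for some $M\ge 0$. Since the desired conclusion forces $M\ge n+1$, I assume throughout that $M\ge 1$ (otherwise there is nothing to prove). I then introduce
\[
v_k := \nu\bigl(\f^{k+n}(0)-\f^k(0)\bigr), \qquad k\ge 0.
\]
Because the period in $R$ is exactly $n$, the iterate $\f^k(0)$ is periodic precisely when $v_k=\infty$, so $M$ is the least $k$ with $v_k=\infty$.

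Next I extract the following block picture from Lemma \ref{valuation lemma}. Part (i) says that on each block $\{jn+1,jn+2,\ldots,(j+1)n\}$ the values of $v_k$ are constant; call the common value $V_{j+1}$, and set $V_0:=v_0$. Part (ii), applied with $m=j+1$, gives $v_{jn}\le v_{jn+1}$, which reads $V_j\le V_{j+1}$. Hence $(V_j)_{j\ge 0}$ is non-decreasing, and the first $k$ at which $v_k=\infty$ must occur at the left endpoint of some block; that is, $M=(j_0-1)n+1$, where $j_0\ge 1$ is the least index with $V_{j_0}=\infty$. This already yields $M\equiv 1\pmod{n}$, so $M=mn+1$ for some $m=j_0-1\ge 0$.

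It remains to rule out $m=0$, i.e.\ $j_0=1$. For this I will evaluate $V_1$ by specialising the identity in the proof of Lemma \ref{valuation lemma}(ii) to $m=1$: one then has $t_1=\f^0(0)=0$ and $t_2=\f^n(0)$, so in the expansion
\[
(t_1+t_2)^d-t_1^d \;=\; \sum_{i=1}^{d}\binom{d}{i}t_2^{i}t_1^{d-i}
\]
only the $i=d$ term survives, giving $\f^{n+1}(0)-\f(0)=t_2^d=\bigl(\f^n(0)\bigr)^d$. Therefore $V_1=d\cdot v_0$, and the strictly preperiodic assumption forces $\f^n(0)\neq 0$, hence $v_0<\infty$ and consequently $V_1<\infty$. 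Thus $j_0\ge 2$, which gives $M\ge n+1$ and $m=j_0-1\ge 1$, as claimed.

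The main obstacle is keeping the indexing honest: one must check that part (i) of Lemma \ref{valuation lemma} covers the endpoint $a=n$ so that consecutive blocks abut without a gap and the monotonicity $V_j\le V_{j+1}$ propagates across every boundary, and that under this non-decreasing block structure the minimum index with $v_k=\infty$ cannot fall in the interior of a block. Once that is in hand, the explicit identity $V_1=d\cdot v_0$ is just a clean specialisation of a formula already appearing in the previous lemma.
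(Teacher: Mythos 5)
Your proof is correct and follows essentially the same route as the paper's: both hinge on Lemma~\ref{valuation lemma}(i) to force the tail length to be $\equiv 1\pmod n$ (the paper applies the lemma directly at the tail index; you repackage the same fact as block-constancy of $v_k$), and both rule out $m=0$ via the identity $\f^{n+1}(0)-\f(0)=\bigl(\f^n(0)\bigr)^d$. One small remark: your appeal to Lemma~\ref{valuation lemma}(ii) and the monotonicity $V_j\le V_{j+1}$ is superfluous, since block-constancy alone already forces the least index with $v_k=\infty$ to be a block left endpoint; it does no harm, but the paper's direct application of part~(i) at $k=mn+a$ is the leaner version of the same idea.
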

	\begin{Proof}

	That the period is $n$ is granted by Lemma \ref{step1_periodic}.
		Assume that $0$ is a strictly preperiodic point of $\f(x)$ with period type $(mn+a,n)$, for some $m\ge 0$ and $0<a\leq n$. So, we have that ${\f^{(m+1)n+a}(0)-\f^{mn+a}(0)=0}$. In view of Lemma \ref{valuation lemma} (\ref{1}), we must have 
		 $\f^{(m+1)n+1}(0)=\f^{mn+1}(0)$. In particular, $\f^{mn+1}(0)$ is a periodic point for $\f(x)$. Therefore, $a=1$ since otherwise we obtain a contradiction to the fact that $0$ is strictly preperiodic with period type $(mn+a,n)$.
		
		Now, we deal with the case $m=0$, i.e, we assume that $0$ is a preperiodic point of period type $(1,n)$. This means that $\f^{n+1}(0)=\f(0)=c$, or equivalently, $\left(\f^n(0)\right)^d+c=c$. This implies that $\f^n(0)=0$, which contradicts the assumption that $0$ is strictly preperiodic. Thus, $m\geq1$.

	\end{Proof}
	In the following theorem we show that the conclusion of Lemma \ref{mn+1 lemma} never occurs. 
\begin{Theorem}
		\label{last_periodic}
		Let $\f(x)=x^d+c\in R[x]$ where $\nu\left(d\right)=0$ and $\Char(k)=p>d$. If $\widetilde{0}$ is a periodic point for $\widetilde{\f}(x)$ in $k$ with exact period $n$, then either $0$ is a periodic point for $\f(x)$ with period $n$; or $0$ is a wandering point.
	\end{Theorem}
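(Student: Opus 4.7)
The plan is to argue by contradiction using Lemma~\ref{mn+1 lemma}: suppose $0$ is neither wandering nor periodic, so $0$ has period type $(mn+1,n)$ for some $m\ge 1$. Setting $a=\f^{mn}(0)$ and $b=\f^{(m+1)n}(0)$, I will produce two incompatible estimates on $\nu(a-b)$. First note that $a,b\in\p\setminus\{0\}$ (both reduce to $\widetilde{0}$, and nonvanishing follows from $0$ being strictly preperiodic), and the periodicity $\f^{mn+1}(0)=\f^{(m+1)n+1}(0)$ forces $\f(a)=\f(b)$, i.e.\ $a^d=b^d$. Hence $u:=a/b$ is a $d$-th root of unity in $K$ with $u\ne 1$. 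Because $p>d$, the polynomial $x^d-1$ is separable over $k$, so $\widetilde{u}\ne 1$ and $u-1\in R^*$; consequently $\nu(a-b)=\nu(b)$.

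For the dynamical estimate I would exploit that $\f(x)=x^d+c$ implies every iterate $\f^n$ is a polynomial in $x^d$ alone: $\f^n(x)=\sum_{j\ge 0}A_{jd}x^{jd}$ with $A_0=\f^n(0)$. A Taylor expansion of $\widetilde{\f}^n(x)=\widetilde{\f}^{n-1}(x^d+\widetilde{c})$ about $\widetilde{c}$ shows that $A_d\equiv (\widetilde{\f}^{n-1})'(\widetilde{c})\pmod{\p}$, which is nonzero because $\nu(d)=0$ and $\widetilde{\f}^j(\widetilde{0})\ne\widetilde{0}$ for $1\le j\le n-1$. Hence $A_d\in R^*$, yielding $\nu(\f^n(y)-\f^n(0))=d\,\nu(y)$ for every $y\in\p\setminus\{0\}$. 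Applying this along the orbit $y_k:=\f^{kn}(0)$, a short induction gives $\nu(y_k)=\nu(y_1)$ for every $k\ge 1$ and, crucially, $y_k\equiv y_1\pmod{\p^{d\nu(y_1)}}$. Specializing to $y_m=a$ and $y_{m+1}=b$ produces $\nu(a-b)\ge d\,\nu(b)$.

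Combining the two estimates yields $\nu(b)\ge d\,\nu(b)$, contradicting $\nu(b)\ge 1$ and $d\ge 2$. The main obstacle is the dynamical estimate: one must recognize that $\f^n$ has no $x$-terms of degree strictly less than $d$ and that the coefficient of $x^d$ is a unit, which is precisely what forces the entire $\f^n$-orbit of $0$ to cluster in a small coset of $\p^{d\nu(y_1)}$ about $y_1$. The algebraic constraint $a=ub$ with $u-1$ a unit only permits the weaker clustering $\nu(a-b)=\nu(b)$, and comparing the two precisions is the source of the contradiction.
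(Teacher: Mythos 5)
Your proof is correct, and it takes a genuinely different route from the paper's. Both arguments begin from Lemma~\ref{mn+1 lemma}, assuming $0$ has period type $(mn+1,n)$ with $m\ge 1$, and both set $a=\f^{mn}(0)$, $b=\f^{(m+1)n}(0)$ and exploit the identity $a^d=b^d$. The paper expands $(t_1+t_2)^d-t_1^d$ binomially and splits into cases according to whether $\nu(a)=\nu(b-a)$: when they differ, the binomial terms have pairwise distinct valuations (here is where $p>d$ is used), giving a nonzero sum; when they agree, the proof calls on Lemma~\ref{valuation lemma} (whose part (2) is itself the $x^d$-structure in disguise) to force $\nu(b-a)$ strictly upward. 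You instead observe that $a^d=b^d$ makes $u=a/b$ a nontrivial $d$-th root of unity, and that $\nu(d)=0$ forbids $\widetilde{u}=1$ (from $1+u+\cdots+u^{d-1}=0$ reducing to $d\ne 0$), so $u-1\in R^*$ and $\nu(a-b)=\nu(b)$; this disposes of the paper's case distinction in one stroke. You then get the opposite inequality $\nu(a-b)\ge d\nu(b)$ directly from the fact that $\f^n$ is a polynomial in $x^d$ whose $x^d$-coefficient $(\f^{n-1})'(c)=d^{n-1}\prod_{j=1}^{n-1}(\f^j(0))^{d-1}$ is a unit (precisely the hypotheses $\nu(d)=0$ and exact period $n$), so $\nu(\f^n(y)-\f^n(0))=d\nu(y)$ for $y\in\p\setminus\{0\}$ and the whole orbit $\{\f^{kn}(0)\}_{k\ge1}$ clusters in $\f^n(0)+\p^{d\nu(\f^n(0))}$. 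The combination $\nu(b)\ge d\nu(b)$ is the desired contradiction. Your version is more self-contained (it does not invoke Lemma~\ref{valuation lemma}) and the root-of-unity reduction is a cleaner structural explanation of why the two valuations must coincide; the paper's version has the advantage of reusing machinery that serves elsewhere in Section~\ref{sec1}. Two small points you should make explicit: you need $a\ne b$ to conclude $u\ne 1$, which follows because $a=b$ would give $0$ tail length at most $mn$, contradicting $(mn+1,n)$; and the step from ``$x^d-1$ separable over $k$'' to ``$\widetilde{u}\ne 1$'' deserves a sentence, since over a non-Henselian $R$ it is not automatic that distinct roots in $R$ reduce to distinct roots in $k$ --- the clean argument is the factor $1+u+\cdots+u^{d-1}$ reducing to the unit $d$.
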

\begin{Proof}
If $0$ is not a wandering point for $\f(x)$, then its period must be $n$, see Lemma \ref{step1_periodic}.
		If $0$ is a preperiodic point of $\f(x)$, then it must be of type $(mn+1,n)$ for some integer $m\ge 1$, see Lemma \ref{mn+1 lemma}. We now show that the tail length $mn+1$ can never occur, so we assume on the contrary that the tail length is $mn+1$.
		
		We set $\f^{mn}(0)=t_1\ne 0$ with $\nu\left(t_1\right)=\alpha_1$, and $\f^{(m+1)n}(0)-\f^{mn}(0)=t_2\ne0$ with $\nu\left(t_2\right)=\alpha_2$.
Now, we have
		\begin{align*}
		\nu\left(\f^{(m+1)n+1}(0)-\f^{mn+1}(0)\right)&=\nu\left((\f^{(m+1)n}(0))^d-\f^{mn}(0)^d\right)&=\nu\left((t_1+t_2)^d-t_1^d\right)\\
		&=\nu\left(\underset{i=1}{\overset{d}{\sum}}\binom{d}{i}t_2^i t_1^{d-i}\right).
		\end{align*}
		Since $p>d$, it follows that $p\nmid \binom{d}{i}$ for any $1\leq i\leq d$. This means that ${\nu\left(\binom{d}{i}t_2^it_1^{d-i}\right)=i\alpha_2+(d-i)\alpha_1}$. Assume $\alpha_1\neq\alpha_2$. If $\nu\left(\binom{d}{i}t_2^it_1^{d-i}\right)=\nu\left(\binom{d}{j}t_2^jt_1^{d-j}\right)$, then $i\alpha_2+(d-i)\alpha_1=j\alpha_2+(d-j)\alpha_1$ implying that $i=j$.
		Therefore, we obtain
		\begin{align*}
		\nu\left(\underset{i=1}{\overset{d}{\sum}}\binom{d}{i}t_2^i t_1^{d-i}\right)&=\underset{1\leq i\leq d}{\min}\left(\nu\left(\binom{d}{i}t_2^i t_1^{d-i}\right)\right)&=\underset{1\leq i\leq d}{\min}\left(i\alpha_2+(d-i)\alpha_1\right)&\neq \infty.
		\end{align*}
		It follows that $f^{(m+1)n+1}(0)\ne\f^{mn+1}(0)$. 
		
		 Now, we are left with case $\alpha_1=\alpha_2=:\alpha$. We remark that $\f^{n}(\f^{mn}(0))=\f^{(m+1)n}(0)$, or equivalently, $\f^n(t_1)=t_1+t_2$. We write $\f^n(x)=\underset{i=0}{\overset{d^{n-1}}{\sum}}a_i \cdot  x^{di}\text{ with } a_0=\f^n(0)$,
		hence we obtain ${\f^n(t_1)=a_0+\underset{i=1}{\overset{d^{n-1}}{\sum}}a_i \cdot  t_1^{di}}$. In particular,
		$t_1+t_2-\underset{i=1}{\overset{d^{n-1}}{\sum}}a_i \cdot  t_1^{di}=a_0$, and hence
		 $\nu\left(a_0\right)=\nu\left(\f^n(0)\right)\geq \alpha$.
		
		If $\nu\left(\f^n(0)\right)=\infty$, then $0$ is a periodic points of $\f(x)$ where this contradicts the assumption that $0$ is a preperiodic point of type $(mn+1,n)$.  Thus, $\nu\left(\f^n(0)\right)<\infty$. 
		
		We notice that
		\[\nu\left(\f^{n+1}(0)-\f(0)\right)=\nu\left(\f^{n}(0)^d\right)=d\cdot \nu\left(\f^{n}(0)\right)\geq d\alpha> \alpha.\]
		As seen in Lemma \ref{valuation lemma}(\ref{1}), $\nu\left(\f^{mn+n}(0)-\f^{mn}(0)\right)=\nu\left(\f^{mn+1}(0)-\f^{(m-1)n+1}(0)\right)$. In particular, this yields that $\nu\left(\f^{2n}(0)-\f^{n}(0)\right)=\nu\left(\f^{n+1}(0)-\f(0)\right)>\alpha$.
		Also, in view of Lemma \ref{valuation lemma}(\ref{2}), we have that
		\[\nu\left(\f^{(m+1)n}(0)-\f^{mn}(0)\right)=\nu\left(\f^{mn+1}(0)-\f^{(m-1)n+1}(0)\right)\geq\nu\left(\f^{mn}(0)-\f^{(m-1)n}(0)\right).\]
		An induction argument shows that  
		$\nu\left(\f^{(m+1)n}(0)-\f^{mn}(0)\right)> \alpha.$
		This contradicts the fact that $\alpha=\nu\left(\f^{(m+1)n}(0)-\f^{mn}(0)\right)$.
	\end{Proof}
	
		It is worth mentioning that if $K=\Q_p$ and $d=2$, then Theorem \ref{last_periodic} was proved in \cite[Proposition 2.4]{Mullen}.

\section{Polynomial dynamical systems modulo prime powers}

In this section, $K$ is a number field with ring of integers $R$. If $\p$ is a prime ideal in $R$, then $K_{\p}$ is the completion of $K$ with respect to $\p$, $R_{\p}$ is its ring of integers, $\nu_{\p}$ is  the discrete valuation corresponding to the place $\p$ of $K$, and $k_{\p}$ is the corresponding residue field. We write $N_{K/\Q}$ for the norm map.

We recall that a {\em critical point} of a polynomial $f$ is a point $s$ such that $f'(s)=0$.
 If all critical points of $f$ have finite orbits, then $f$ is called post-critically finite (PCF).  The polynomial $\f(x)$ is a PCF polynomial with a unique critical point $0$ whose orbit is called the critical orbit of $\f(x)$.

We start this section with establishing a link between primitive prime divisors in the critical orbit of the polynomial $\f(x)\in K[x]$ and the periodicity of the critical orbit of the reduction of $\f(x)$ modulo these primes.

The following proposition is a direct consequence of the definitions of primitive prime divisors and the period of a periodic point.

\begin{Proposition}
\label{prop_prim_per}
Let $\f(x)=x^d+c\in K[x]$. The prime $\p$ is a primitive prime divisor for $\f^n(0)$, $n\ge 1$, with $\nu_{\p}\left(\f^n(0)\right)\geq t$, $t\geq1$, if and only if $0+\p^t$ is a periodic point for $\f(x)+\p^t[x]$ with period $n$.
\end{Proposition}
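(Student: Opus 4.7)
The plan is to prove the biconditional in two steps. The forward implication is a direct translation of definitions, while the converse requires a short argument exploiting that $0$ is the critical point of $\f$.

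For the forward direction, assume $\p$ is primitive for $\f^n(0)$ with $\nu_\p(\f^n(0))\ge t$. The primitivity condition $\nu_\p(\f^i(0))=0$ for $1\le i<n$ yields $\f^i(0)\notin\p\supseteq\p^t$, while $\nu_\p(\f^n(0))\ge t$ gives $\f^n(0)\in\p^t$. Thus $n$ is the smallest positive integer with $\f^n(0)\equiv 0\pmod{\p^t}$, so $0+\p^t$ has exact period $n$ under $\f(x)+\p^t[x]$.

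For the converse, assume $0+\p^t$ is periodic with exact period $n$. The relation $\f^n(0)\equiv 0\pmod{\p^t}$ gives $\nu_\p(\f^n(0))\ge t$ for free, so what remains is the primitivity statement $\nu_\p(\f^i(0))=0$ for $1\le i<n$. Let $n_1$ denote the exact period of $\widetilde 0\in k_\p$; reducing $\f^n(0)\in\p$ modulo $\p$ gives $n_1\mid n$. The key input is that $0$ is a critical point of $\f$, which by an easy induction from $\f(x)-\f(0)=x^d$ produces a factorization
\[
\f^{n_1}(x)-\f^{n_1}(0)=x^d\cdot Q(x)
\]
with $Q\in R_\p[x]$. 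Setting $a_k:=\f^{kn_1}(0)$ and evaluating at $x=a_k$ yields $a_{k+1}-a_1=a_k^d\,Q(a_k)$, whence $\nu_\p(a_{k+1}-a_1)\ge d\cdot\nu_\p(a_k)$.

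Writing $\alpha:=\nu_\p(a_1)\ge 1$, an induction on $k$ using $d\ge 2$ shows that $\nu_\p(a_k)=\alpha$ for every $k\ge 1$, since $d\alpha>\alpha$ forces $\nu_\p(a_{k+1})=\nu_\p(a_1)$ by the ultrametric inequality. Applied to $k=n/n_1$, this invariance gives $\nu_\p(\f^{n_1}(0))=\nu_\p(\f^n(0))\ge t$, so $\f^{n_1}(0)\in\p^t$; thus $n_1$ is also a period of $0+\p^t$, and minimality of $n$ forces $n=n_1$. Finally, minimality of $n_1$ as the exact period of $\widetilde 0$ gives $\widetilde{\f^i(0)}\ne 0$, i.e.\ $\nu_\p(\f^i(0))=0$, for all $1\le i<n$. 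The only nontrivial step is the valuation-invariance $\nu_\p(\f^{kn_1}(0))=\nu_\p(\f^{n_1}(0))$ along the critical orbit; once that is established, the proposition is a clean minimality argument.
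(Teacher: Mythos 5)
Your proof is correct, and it supplies an argument where the paper gives none: the authors only remark that the proposition is a direct consequence of the definitions. The forward direction is indeed a definitional translation, exactly as you write. The converse, however, is not: knowing that $0+\p^t$ has exact period $n$ modulo $\p^t$ does not by itself preclude $0<\nu_\p(\f^i(0))<t$ for some $i<n$, which would destroy primitivity, and for a non-critical starting point this implication really can fail. You correctly locate the mechanism in the critical-point structure of $0$: the factorization $\f^{n_1}(x)-\f^{n_1}(0)=x^d\,Q(x)$ with $Q\in R_\p[x]$ forces $\nu_\p\bigl(\f^{kn_1}(0)\bigr)$ to be independent of $k$, so the exact period modulo $\p$ is already the exact period modulo $\p^t$, and primitivity follows by minimality. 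This is the same critical-orbit valuation rigidity the authors develop separately later in Section 2 (the lemma on valuations of differences along the orbit and the theorem excluding strictly preperiodic lifts of a periodic residue orbit); packaging it into a short self-contained lemma, as you do, is arguably the right level of detail for this proposition. One minor point worth stating explicitly: both the proposition and your factorization $Q\in R_\p[x]$ implicitly require $c\in R_\p$, which is harmless (if $\nu_\p(c)<0$ then $\nu_\p(\f^i(0))<0$ for all $i$, both sides of the equivalence are vacuous, and $\f(x)+\p^t[x]$ is not even defined) but should be noted.
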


\begin{Theorem}
		\label{lifting theorem}
		Fix $d\ge 2$. Let $c_0\in K$ be such that $\p$ is a primitive prime divisor for $\fo^n(0)$.
		If $\nu_{\p}\left(\fo^n(0)\right)>2\nu_{\p}\left(\frac{\partial \f^n(0)}{\partial c}|_{c=c_0}\right)$, then there is a unique $\overline{c_0} \in R_{\p}$ such that the following statements hold:
		\begin{enumerate}
		    \item $\nu_{\p}\left(\overline{c_0}- c_0\right)>\nu_{\p}\left(\frac{\partial \f^n(0)}{\partial c}|_{c=c_0}\right)\geq 0$.
		    \item The point 0 is periodic with exact period $n$ for  $f_{d,\overline{c_0}}(x)\in R_{\p}[x]$
		\end{enumerate}
		In particular, $\nu_{\p}\left(\overline{c_0}- c_0\right)=\nu_{\p}\left(\fo^n(0)\right)-\nu_{\p}\left(\frac{\partial \f^n(0)}{\partial c}|_{c=c_0}\right)$.
	\end{Theorem}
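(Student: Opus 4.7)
The plan is to view $F(c) := f_{d,c}^n(0)$ as a polynomial in the variable $c$ with integer coefficients (since $\f(x)=x^d+c$ has all coefficients in $\mathbb{Z}[c]$), and then apply Hensel's lemma in its quantitative form to produce the lift $\overline{c_0}$ as a root of $F$ near $c_0$. Observe that the assumption that $\p$ is a primitive prime divisor of $\fo^n(0)$ forces $c_0 \in R_{\p}$: for $n=1$ we have $\nu_{\p}(c_0) = \nu_{\p}(\fo(0)) > 0$, while for $n \ge 2$ we have $\nu_{\p}(c_0) = \nu_{\p}(\fo(0)) = 0$. Therefore $F \in R_{\p}[c]$ and we may reasonably seek a root in $R_{\p}$ close to $c_0$.

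The first key step is to translate the hypotheses into the Hensel-lemma premise. We have $F(c_0) = \fo^n(0)$ and $F'(c_0) = \frac{\partial \f^n(0)}{\partial c}|_{c=c_0}$. The primitive-prime-divisor assumption gives $\nu_{\p}(F(c_0)) > 0$, and the inequality in the statement gives $\nu_{\p}(F(c_0)) > 2\nu_{\p}(F'(c_0))$, which is exactly the hypothesis of the strong form of Hensel's lemma. Applying it produces a unique $\overline{c_0} \in R_{\p}$ satisfying $F(\overline{c_0}) = 0$ together with
\[
\nu_{\p}(\overline{c_0} - c_0) \;=\; \nu_{\p}(F(c_0)) - \nu_{\p}(F'(c_0))
\;>\; \nu_{\p}(F'(c_0)) \ge 0,
\]
which simultaneously delivers item (i) and the final formula in the statement.

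It remains to verify item (ii), that the exact period of $0$ under $f_{d,\overline{c_0}}$ is exactly $n$. On one side, the equation $F(\overline{c_0}) = 0$ means $f_{d,\overline{c_0}}^n(0) = 0$, so the true period $m$ of $0$ divides $n$. For the reverse divisibility I would use Proposition \ref{prop_prim_per}: since $\p$ is a primitive prime divisor of $\fo^n(0)$, the reduction $\widetilde{0}$ has exact period $n$ under $\widetilde{\fo}$. But $\overline{c_0} \equiv c_0 \pmod{\p}$ implies $\widetilde{f_{d,\overline{c_0}}} = \widetilde{\fo}$ in $k_{\p}[x]$, so $\widetilde{0}$ also has exact period $n$ under $\widetilde{f_{d,\overline{c_0}}}$. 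Since $f_{d,\overline{c_0}}^m(0) = 0$ reduces to $\widetilde{f_{d,\overline{c_0}}}^m(\widetilde{0}) = \widetilde{0}$, we get $n \mid m$, and combined with $m \mid n$ this forces $m = n$.

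The argument is essentially routine once Hensel's lemma is invoked; the only genuine subtlety, and the place I would be most careful, is confirming that the strong Hensel hypothesis holds when $\nu_{\p}(F'(c_0)) > 0$ (so we are in a genuinely ramified situation rather than the classical simple-root version), and that the uniqueness assertion really yields a unique $\overline{c_0}$ in the open disc of radius $\p^{\nu_{\p}(F'(c_0))+1}$ around $c_0$; these are both standard but need to be cited in the right form. The period argument itself poses no obstacle, provided Proposition \ref{prop_prim_per} is invoked before collapsing the two divisibilities $m \mid n$ and $n \mid m$.
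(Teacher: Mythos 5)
Your proposal is correct and follows essentially the same route as the paper: apply the strong form of Hensel's lemma to $F(c) := \f^n(0)$ viewed as a polynomial in $c$ over $R_{\p}$ to produce the unique root $\overline{c_0}$ with the stated valuation of $\overline{c_0}-c_0$, and then use the reduction $\widetilde{\overline{c_0}}=\widetilde{c_0}$ together with Proposition \ref{prop_prim_per} to pin down the exact period. Your divisibility formulation ($m\mid n$ from $F(\overline{c_0})=0$ and $n\mid m$ from the exact period of $\widetilde{0}$ in $k_{\p}$) is just a rephrasing of the paper's direct check that $f_{d,\overline{c_0}}^t(0)\neq 0$ for $t<n$.
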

\begin{Proof}
		That $\p$ is a primitive prime divisor for $\fo^n(0)$ means that $\widetilde{\fo^n}(0)=\widetilde{0}$, see Proposition \ref{prop_prim_per}. Also, since $\nu_{\p}\left(\fo^n(0)\right)>2\nu_{\p}\left(\frac{\partial \f^n(0)}{\partial c}|_{c=c_0}\right)$, it follows by Hensel's lemma \cite[Theorem 6.28]{advanced_algebra}, that there's a unique $\overline{c_0} \in R_{\p}$ such that 
		${\nu_{\p}\left(\overline{c_0}- c_0\right)\geq\nu_{\p}\left(\fo^n(0)\right)-\nu_{\p}\left(\frac{\partial \f^n(0)}{\partial c}|_{c=c_0}\right)}$ and $f_{d,\overline{c_0}}^n(0)=0\in R_{\p}$. We also have that $\nu_{\p}\left(\overline{c_0}- c_0\right)=\nu_{\p}\left(\fo^n(0)\right)-\nu_{\p}\left(\frac{\partial \f^n(0)}{\partial c}|_{c=c_0}\right)$.
		
		Since $\widetilde{\overline{c_0}}=\widetilde{c_0}$, it follows that $\widetilde{f_{d,\overline{c_0}}^t}(0)=\widetilde{f_{d,c_0}^t}(0)$ for all integers $t\geq1$. Moreover, since $\p$ is a primitive prime divisor for $\fo^n(0)$, we have that $\widetilde{f_{d,\overline{c_0}}}^t(0)=\widetilde{f_{d,c_0}}^t(0)\neq\widetilde{0}$, for any $t<n$. This implies that $f_{d,\overline{c_0}}^t(0)\neq0\in R_{\p}$, i.e, 0 has exact period $n$ for $f_{d,\overline{c_0}}(x)\in R_{\p}[x]$.
	\end{Proof}

\begin{Example}
	\label{lifting example}
	Let $K=\Q$. The prime $5$ is a primitive divisor for $f_{2,1}^3(0)$. We also have that
	$\nu_5(f_{2,1}^3(0))=1>0=2\cdot \nu_5\left(\frac{\partial f_{2,c}^3(0)}{\partial c}|_{c=1}\right).$
	By Theorem \ref{lifting theorem}, there is $\overline{c_0}\in\Z_5$ such that $\overline{c_0}\equiv 1 \mod 5$, and $f_{2,\overline{c_0}}^3(0)=0\in\Z_5$. In fact, we may show that
	$\overline{c_0}=1+5t_0$ for some $t_0\in\Z_p$ where
 $t_0=3+25t_1$ for some $t_1\in\Z_5$. In particular, $\overline{c_0}=16+125t_1$. 
\end{Example}

In the following example, we show that the conditions in Theorem \ref{lifting theorem} are  necessary. In fact, we can see for that for any value close to $c_0$, i.e, $\overline{c_0}=c_0+p\cdot t$ where $t\in\Z_p$, we get $\nu_p(f_{2,\overline{c_0}}^n(0))\leq \nu_p(f_{2,c_0}^n(0))$. This means that $\nu_p(f_{2,\overline{c_0}}^n(0))<\infty$, and so, 0 is not periodic of period $n$ for $f_{2,\overline{c_0}}^n(0)$.

\begin{Example}
\label{bad_ex}
	Let $K=\Q$. The prime $13$ is a primitive divisor for $f_{2,3}^5(0)$. We see that $\nu_{13}(f_{2,3}^5(0))=1$ and $\nu_{13}\left(\frac{\partial f_{2,c}^5(0)}{\partial c}|_{c=3}\right)=1$.
	It can be seen that for any $t_0\in\Z_{13}$, we have $f_{2,3+13t_0}^5(0)\not\equiv 0 \mod 13^2$. This implies that $f_{2,3+13t_0}^5(0)\neq 0\in\Z_{13}$. 
\end{Example}

Fix an integer $n\ge 1$. Given a polynomial $f(x)\in F[x]$, where $F$ is a field, we can define the corresponding {\em dynatomic polynomial} as follows \cite[Section 4.1]{dynamicsbook}
    \[\phi_n(x):=\underset{t|n}{\prod}(f^t(x)-x)^{\mu(\frac{n}{t})}\]
    where $\mu$ is the M\"{o}bius function.
    We note that the roots of the dynatomic polynomial are periodic points of $f(x)$ of period $n$ but not necessarily of exact period $n$. Therefore, the roots of the dynatomic polynomial are said to be points of {\em formal period} $n$. Now, the {\em Gleason polynomials} associated to $\f(x)\in F[x]$ are defined as follows
    \begin{equation*}
    G_{d,n}(c)=\phi_{d,n}(0,c):=\underset{t|n}{\prod}(\f^t(0))^{\mu(\frac{n}{t})}.
    \label{gleason}
    \end{equation*}
    We note that by Möbius inversion, we get that
    \[\f^n(0)=\underset{t|n}{\prod}G_{d,t}(c).\]

In what follows we replace the condition $\nu_{\p}\left(\fo^n(0)\right)>2\nu_{\p}\left(\frac{\partial \f^n(0)}{\partial c}|_{c=c_0}\right)$ in Theorem \ref{lifting theorem} by a condition involving Gleason polynomials. 

\begin{Lemma}
		\label{disc lemma}
		Let $\p$ be a primitive prime divisor for $\fo^n(0)$. If $\nu_{\p}\left(\frac{\partial \f^n(0)}{\partial c}|_{c=c_0}\right)>0$, then $\disc_c(\G(c))\in\p$,
		where $\G(c):=\phi_{d,n}(0,c)$ is the Gleason polynomial.
	\end{Lemma}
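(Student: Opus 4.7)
The plan is to exploit the Möbius factorization $\f^n(0)=\prod_{t\mid n}G_{d,t}(c)$ relating $\f^n(0)$ to the Gleason polynomials. First I would translate the primitive-prime-divisor hypothesis into divisibility information on the individual Gleason factors: since $\p$ is primitive for $\fo^n(0)$, we have $\nu_\p(\f^t(0)|_{c=c_0})=0$ for every $1\le t<n$; combined with $\f^s(0)=\prod_{r\mid s}G_{d,r}(c_0)$, this yields $\nu_\p(G_{d,s}(c_0))=0$ for every proper divisor $s$ of $n$. On the other hand, the product formula and $\nu_\p(\f^n(0)|_{c=c_0})>0$ concentrate the positive valuation in the remaining factor, so $\nu_\p(\G(c_0))>0$. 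In other words, $\widetilde{c_0}\in k_\p$ is a root of $\widetilde{\G}(c)$ and of none of the $\widetilde{G_{d,s}}(c)$ for proper divisors $s$ of $n$.

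Next I would differentiate the product factorization with respect to $c$ via the Leibniz rule and evaluate at $c_0$:
\[
\frac{\partial \f^n(0)}{\partial c}\bigg|_{c=c_0}=\sum_{t\mid n}G_{d,t}'(c_0)\prod_{\substack{s\mid n\\ s\ne t}}G_{d,s}(c_0).
\]
Each summand indexed by a proper divisor $t<n$ still contains $\G(c_0)$ among its factors, hence has positive $\p$-valuation by the first step. The only potentially unit summand is the $t=n$ term, equal to $\G'(c_0)\cdot\prod_{s\mid n,\,s<n}G_{d,s}(c_0)$, whose second factor is a $\p$-adic unit by the first step. Therefore the hypothesis $\nu_\p\bigl(\partial\f^n(0)/\partial c|_{c=c_0}\bigr)>0$ forces $\nu_\p(\G'(c_0))>0$.

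Combining the two steps, $\widetilde{c_0}$ is a common root of the reduction $\widetilde{\G}(c)$ and of its formal derivative. Since $\G$ is monic in $c$, its reduction modulo $\p$ has the same degree, and a common root of a monic polynomial and its derivative is a multiple root; equivalently $\disc_c(\widetilde{\G})=0$ in $k_\p$, i.e.\ $\disc_c(\G(c))\in\p$, which is the desired conclusion. The only delicate point is the second step, where one needs to rule out a cancellation in the Leibniz sum producing a spurious high valuation; this is taken care of by the observation that exactly one summand (the $t=n$ one) has any chance of being a unit, and it is a unit if and only if $\G'(c_0)$ is.
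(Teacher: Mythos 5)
Your proof is correct and follows essentially the same route as the paper: factor $\f^n(0)$ as $\prod_{t\mid n}G_{d,t}(c)$, apply Leibniz, use primitivity to isolate the $t=n$ summand, and conclude that $\widetilde{c_0}$ is a common root of $\widetilde{\G}$ and $\widetilde{\G}'$. The only noticeable difference is cosmetic: the paper establishes $\widetilde{\G}(\widetilde{c_0})=0$ by citing the equivalence of formal and exact period for the critical point (Silverman, Theorem 4.5), whereas you deduce it more elementarily from the Möbius factorization and the vanishing of $\nu_\p$ on the lower iterates — a slightly more self-contained derivation of the same fact.
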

\begin{Proof}
		Recall that $\p$ is a primitive prime divisor for $\fo^n(0)$ is equivalent to $0$ being a  periodic point for $\widetilde{\fo}(x)$ in $k_{\p}$ with exact period $n$, see Proposition \ref{prop_prim_per}. In view of \cite[Theorem\ 4.5]{dynamicsbook}, the point 0 has formal period $n$ if and only if 0 has exact period $n$. This implies that $\widetilde{c_0}$ is a root of the polynomial $\widetilde{\G}(c)$, i.e, $\widetilde{\G}(\widetilde{c_0})=0\in k_{\p}$.
	
		Now, the derivative of $\f^n(0)=\underset{t|n}{\prod}\Gt(c)$ satisfies the identity
		\[\frac{\partial \f^n(0)}{\partial c}=\underset{t|n}{\sum}\frac{\partial \Gt(c)}{\partial c} \underset{\underset{u\neq t}{u|n}}{\prod}\Gu(c).\]
		Since $\widetilde{\G}(\widetilde{c_0})=0$ and $\G(c_0)$ divides each term except when $t=n$, we have
		\[\frac{\partial \widetilde{\f}^n(0)}{\partial c}|_{c=\widetilde{c_0}} = \frac{\partial \widetilde{\G}(c)}{\partial c}|_{c=\widetilde{c_0}} \underset{\underset{u\neq n}{u|n}}{\prod}\widetilde{\Gu}(\widetilde{c_0})\]
		Since $\p$ is a primitive prime divisor for $\f^n(0)$, we have that $\widetilde{\Gu}(\widetilde{c_0})\neq0 \text{ for any }u<n$. Therefore,
		\[\frac{\partial \widetilde{\f}^n(0)}{\partial c}|_{c=\widetilde{c_0}}=0 \text{ if and only if } \frac{\partial \widetilde{\G}^n(c)}{\partial c}|_{c=\widetilde{c_0}}=0.\]
		Since $\widetilde{\G}(\widetilde{c_0})=0$, it follows that
		$\frac{\partial \widetilde{\f}^n(0)}{\partial c}|_{c=\widetilde{c_0}}=0$ if and only if $\widetilde{c_0}$ is a repeated root of $\widetilde{\G}(c)$. This implies that $\disc_c(\widetilde{\G}(c))=0$. This concludes the proof.
	\end{Proof}
\begin{Remark}
		\label{remark 1}
		The converse of Lemma \ref{disc lemma} does not hold in general. For example, $G_{2,3}(c)=c^3+2c^2+c+1\equiv(c+8)^2(c+9)\mod\ 23$. Therefore, $23|\disc_c(G_{2,3})$.  Also, $23$ is a primitive prime divisor for $f_{2,-9}^3(0)$. However, $23\nmid \frac{\partial f_{2,c}^3(0)}{\partial c}|_{c=-9}$.
	\end{Remark}
Lemma \ref{disc lemma} leads to a simplification of the condition in Theorem \ref{lifting theorem} as follows.
	\begin{Corollary}
		\label{lifting_corollary}
		Let $c_0\in K$ be such that $\p$ is a primitive prime divisor for $\fo^n(0)$.
		If $\disc_c(\G(c))\not\in\p$, then there is a unique $\overline{c_0} \in R_{\p}$ such that the following statements hold:
		\begin{enumerate}
		    \item $\nu_{\p}\left(\overline{c_0}- c_0\right)> 0$.
		    \item The point 0 is periodic with exact period $n$ for  $f_{d,\overline{c_0}}(x)\in R_{\p}[x]$.
		\end{enumerate}
		Furthermore, $\nu_{\p}\left(\overline{c_0}- c_0\right)=\nu_{\p}\left(\fo^n(0)\right)$.
	\end{Corollary}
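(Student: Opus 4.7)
The plan is to reduce Corollary \ref{lifting_corollary} directly to Theorem \ref{lifting theorem}, using Lemma \ref{disc lemma} to dispense with the partial-derivative hypothesis that appears there.

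First I would take the contrapositive of Lemma \ref{disc lemma}. Since $\p$ is a primitive prime divisor for $\fo^n(0)$, Lemma \ref{disc lemma} asserts that $\nu_{\p}\!\left(\frac{\partial \f^n(0)}{\partial c}\big|_{c=c_0}\right)>0$ forces $\disc_c(\G(c))\in \p$. Hence under our standing hypothesis $\disc_c(\G(c))\notin \p$ we must have
\[
\nu_{\p}\!\left(\frac{\partial \f^n(0)}{\partial c}\Big|_{c=c_0}\right)=0.
\]

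Next I would verify the hypothesis of Theorem \ref{lifting theorem}. Because $\p$ is a primitive prime divisor for $\fo^n(0)$, we have $\nu_{\p}(\fo^n(0))\ge 1>0$, while the preceding step gives $2\nu_{\p}\!\left(\frac{\partial \f^n(0)}{\partial c}|_{c=c_0}\right)=0$. Thus the strict inequality $\nu_{\p}(\fo^n(0))>2\nu_{\p}\!\left(\frac{\partial \f^n(0)}{\partial c}|_{c=c_0}\right)$ holds, and Theorem \ref{lifting theorem} produces a unique $\overline{c_0}\in R_{\p}$ with $\nu_{\p}(\overline{c_0}-c_0)>\nu_{\p}\!\left(\frac{\partial \f^n(0)}{\partial c}|_{c=c_0}\right)\ge 0$ (giving (i)) and with $0$ a periodic point of exact period $n$ for $f_{d,\overline{c_0}}(x)$ (giving (ii)). The final equality $\nu_{\p}(\overline{c_0}-c_0)=\nu_{\p}(\fo^n(0))$ then follows from the formula $\nu_{\p}(\overline{c_0}-c_0)=\nu_{\p}(\fo^n(0))-\nu_{\p}\!\left(\frac{\partial \f^n(0)}{\partial c}|_{c=c_0}\right)$ supplied by Theorem \ref{lifting theorem}, since the subtracted term vanishes.

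There is essentially no obstacle: the whole argument is a two-step bookkeeping deduction, with Lemma \ref{disc lemma} doing the conceptual work of turning the discriminant hypothesis into the nonvanishing of the partial derivative, and Theorem \ref{lifting theorem} (a Hensel-type lifting) doing the analytic work. The only thing one must be careful about is the direction in which Lemma \ref{disc lemma} is applied — one wants its contrapositive, not the statement itself — but since the hypothesis $\disc_c(\G(c))\notin \p$ is equivalent to saying $\widetilde{c_0}$ is a simple root of $\widetilde{\G}(c)$, this is automatic.
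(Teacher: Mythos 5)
Your proof is correct and follows exactly the paper's route: use the contrapositive of Lemma \ref{disc lemma} to deduce $\nu_{\p}\!\left(\frac{\partial \f^n(0)}{\partial c}\big|_{c=c_0}\right)=0$ from $\disc_c(\G(c))\notin\p$, note that $\nu_{\p}(\fo^n(0))>0$ then gives the strict inequality required by Theorem \ref{lifting theorem}, and read off the final valuation identity from the formula supplied there. One tiny imprecision in your closing remark: $\disc_c(\G(c))\notin\p$ is sufficient for (not equivalent to) $\widetilde{c_0}$ being a simple root of $\widetilde{\G}(c)$, but this does not affect the argument.
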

\begin{Proof}
Since $\p$ is a primitive prime divisor for $\fo^n(0)$, we have $\nu_{\p}(\fo^n(0))>0$. According to Lemma \ref{disc lemma}, $\disc_c(\G(c))\not\in\p$ implies that $\nu_{\p}\left(\frac{\partial \f^n(0)}{\partial c}|_{c=c_0}\right)= 0$. Thus, the conditions of Theorem \ref{lifting theorem} are satisfied.
\end{Proof}
In the following corollary, we show the existence of certain values of $c$ such that a specific power of $p$ divides $\f^n(0)$. 
\begin{Corollary}
		\label{power_change}
		Let $c_0\in K$ be such that $\p$ is a primitive prime divisor for $\fo^n(0)$.
		If $\disc_c(\G(c))\not\in\p$, then for any integer $r\geq1$, there is $c_r\in R$ such that $\p$ is a primitive prime divisor for $f_{d,c_r}^n(0)$ and $\nu_{\p}(f_{d,c_r}^n(0))=r$.
\end{Corollary}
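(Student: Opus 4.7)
The plan is to use Corollary \ref{lifting_corollary} to lift $c_0$ to an element $\overline{c_0}\in R_\p$ at which $0$ becomes exactly periodic of period $n$, and then to perturb $\overline{c_0}$ within $R$ so as to realize any prescribed $\p$-adic valuation of $\f^n(0)$. Set $F(c):=\f^n(0)\in R[c]$ and apply Corollary \ref{lifting_corollary} to obtain a unique $\overline{c_0}\in R_\p$ with $\overline{c_0}\equiv c_0\pmod{\p}$ and $F(\overline{c_0})=0$. A Taylor expansion of $F$ around $\overline{c_0}$ gives
\[
F(c)=F'(\overline{c_0})\,(c-\overline{c_0})+(c-\overline{c_0})^2\,H(c)
\]
for some $H(c)\in R_\p[c]$. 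The crucial observation is that $\nu_\p\bigl(F'(\overline{c_0})\bigr)=0$: since $\disc_c(\G(c))\notin\p$, the reduction $\widetilde{c_0}$ is a simple root of $\widetilde{\G}(c)$, and the factorisation of $\partial\f^n(0)/\partial c$ used in the proof of Lemma \ref{disc lemma}, combined with the fact that $\widetilde{\Gu}(\widetilde{c_0})\neq 0$ for all $u\mid n$ with $u<n$ (by primitivity of $\p$), then forces $F'(\overline{c_0})\in R_\p^\ast$.

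Next, fix $r\geq 1$. Since $R$ is dense in its $\p$-adic completion $R_\p$, choose $b\in R$ with $\nu_\p(b-\overline{c_0})\geq r+1$; picking also $\pi\in R$ with $\nu_\p(\pi)=1$ and setting $c_r:=b+\pi^r\in R$, one has $\nu_\p(c_r-\overline{c_0})=r$. Substituting into the Taylor expansion yields
\[
\nu_\p\bigl(F(c_r)\bigr)=\min\bigl(\nu_\p(F'(\overline{c_0}))+r,\ 2r+\nu_\p(H(c_r))\bigr)=r,
\]
since $\nu_\p(F'(\overline{c_0}))=0$, $\nu_\p(H(c_r))\geq 0$, and $r<2r$. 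Finally, $c_r\equiv c_0\pmod{\p}$ implies $\widetilde{f_{d,c_r}^t}(0)=\widetilde{\fo^t}(0)\neq 0$ for every $1\leq t<n$, so $\p$ is indeed a primitive prime divisor of $f_{d,c_r}^n(0)$.

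The only non-routine ingredient is the approximation step, which depends on density of $R$ in $R_\p$; this is standard for the ring of integers of a number field and is the one place where $R$ itself (rather than merely $R_\p$) enters the argument. Everything else is a direct combination of Corollary \ref{lifting_corollary}, the discriminant calculation of Lemma \ref{disc lemma}, and the behaviour of a polynomial near a simple zero.
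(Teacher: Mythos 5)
Your proof is correct and takes essentially the same approach as the paper: both lift $c_0$ to the Hensel root $\overline{c_0}\in R_\p$ via Corollary \ref{lifting_corollary} and then pick $c_r\in R$ at $\p$-adic distance $r$ from $\overline{c_0}$. The paper concludes by applying Corollary \ref{lifting_corollary} to $c_r$ and invoking uniqueness of the lift plus the identity $\nu_{\p}(\overline{c_r}-c_r)=\nu_{\p}(f_{d,c_r}^n(0))$, whereas you re-derive that identity from scratch via a Taylor expansion around the simple root $\overline{c_0}$ --- the same computation that underlies the quantitative clause of Theorem \ref{lifting theorem}.
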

\begin{Proof}
If $\nu_{\p}(\fo^n(0))=r$, then this concludes the proof. Assume that $\nu_{\p}(\fo^n(0))\neq r$. Let $\overline{c_0}$ be as in Corollary \ref{lifting_corollary}, and $c_r\in R\subseteq R_{\p}$ be such that, $\nu_{\p}(c_r-\overline{c_0})=r$. By the choice of $c_r$, we have that $\widetilde{c_r}=\widetilde{\overline{c_0}}=\widetilde{c_0}$. This means that $\p$ is a primitive prime divisor of $f_{d,c_r}^n(0)$, see Proposition \ref{prop_prim_per}. It follows that there is a unique $\overline{c_r}\in R_{\p}$ such that $\nu_{\p}(\overline{c_r}-c_r)>0$ and $f^n_{d,\overline{c_r}}(0)=0$, see Corollary \ref{lifting_corollary}. From the uniqueness of $\overline{c_r}$, we get that $\overline{c_r}=\overline{c_0}$. So,
\[r=\nu_{\p}(\overline{c_0}-c_r)=\nu_{\p}(\overline{c_r}-c_r)=\nu_{\p}(f_{d,c_r}^n(0)),\]
concluding the proof.
\end{Proof}

Let $r\geq 1$ be an integer, Corollary \ref{power_change} implies that except for finitely many primes, once we know that a prime $\p$ can appear as a primitive prime divisor of $\fo^n(0)$ for some $c_0\in K$, there is another value $c_1\in K$ for which $\p$ is a primitive prime divisor of $f_{d,c_1}^n(0)$, and $\nu_{\p}(f_{d,c_1}^n(0))=r$.
\begin{Example}
The prime $5$ is a primitive divisor for $f_{2,1}^3(0)$ with $\nu_{5}(f_{2,1}^3(0))=1$ and $\nu_5\left(\frac{\partial f_{2,c}^3(0)}{\partial c}|_{c=1}\right)=0$. One can see that $5^2||f_{2,-9}^3(0)$.
\end{Example}

\begin{Theorem}
	\label{collecting theorem}
		Fix $d\ge 2$. If there exist a finite set of $K$-rational numbers $c_i$ and primes $\p_i$, $1\le i\le m$, such that $\p_i$ is a primitive prime divisor for $f_{d,c_i}^{n_i}(0)$ and $\nu_{\p_i}(f_{d,c_i}^{n_i}(0))=k_i$, for some $n_i,k_i\geq 1$,  
		then there exists $\overline{c}\in R$ such that 
		\begin{enumerate}
		    \item $\p_i$ is a primitive prime divisor for $f_{d,\overline{c}}^{n_i}(0)$, $1\le i\le m$, and
		     \item $\nu_{\p_i}( f_{d,\overline{c}}^{n_i}(0))=k_i$, $1\le i\le m$.
		\end{enumerate}
\end{Theorem}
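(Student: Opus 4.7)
The plan is to apply the Chinese Remainder Theorem in $R$. The key observation is that the property we want, namely that $\p_i$ is a primitive prime divisor of $f_{d,c}^{n_i}(0)$ with $\nu_{\p_i}(f_{d,c}^{n_i}(0)) = k_i$, is encoded entirely by the residue class of $c$ modulo $\p_i^{k_i+1}$. Indeed, it is equivalent to the three conditions: (a) $f_{d,c}^{n_i}(0) \equiv 0 \pmod{\p_i^{k_i}}$, (b) $f_{d,c}^{n_i}(0) \not\equiv 0 \pmod{\p_i^{k_i+1}}$, and (c) $f_{d,c}^{t}(0) \not\equiv 0 \pmod{\p_i}$ for all $1 \le t < n_i$. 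Without loss of generality we may assume the primes $\p_i$ are pairwise distinct, since repeated primes with the same $(n_i,k_i)$ are redundant and repeated primes with different $(n_i,k_i)$ would already be inconsistent with the hypotheses.

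First I would observe that each $c_i$ lies in $R_{\p_i}$ (from $\nu_{\p_i}(c_i) \geq 0$, which follows from $c_i = f_{d,c_i}(0)$ together with the primitivity of $\p_i$). Therefore the image of $c_i$ in $R/\p_i^{k_i+1} \cong R_{\p_i}/\p_i^{k_i+1}R_{\p_i}$ is well-defined. Since the $\p_i^{k_i+1}$ are pairwise coprime ideals in the Dedekind domain $R$, the Chinese Remainder Theorem yields an element $\overline{c} \in R$ satisfying
\[
\overline{c} \equiv c_i \pmod{\p_i^{k_i+1}}, \qquad 1 \leq i \leq m.
\]

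Next, I would establish by induction on $t \ge 1$ that $f_{d,\overline{c}}^{t}(0) \equiv f_{d,c_i}^{t}(0) \pmod{\p_i^{k_i+1}}$. The base case $t=1$ is immediate from the choice of $\overline{c}$. For the inductive step, write $A = f_{d,\overline{c}}^{t}(0)$ and $B = f_{d,c_i}^{t}(0)$, so $A - B \in \p_i^{k_i+1} R_{\p_i}$ by hypothesis. Then
\[
f_{d,\overline{c}}^{t+1}(0) - f_{d,c_i}^{t+1}(0) = (A^d - B^d) + (\overline{c} - c_i) = (A-B)\sum_{j=0}^{d-1} A^{j}B^{d-1-j} + (\overline{c} - c_i),
\]
and both summands lie in $\p_i^{k_i+1} R_{\p_i}$, completing the induction.

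Finally, the conclusion follows directly: specializing the congruence at $t = n_i$ gives $\nu_{\p_i}(f_{d,\overline{c}}^{n_i}(0)) = \nu_{\p_i}(f_{d,c_i}^{n_i}(0)) = k_i$, since the two quantities agree modulo $\p_i^{k_i+1}$ and the latter has valuation exactly $k_i < k_i+1$. Specializing at $t < n_i$ gives $\nu_{\p_i}(f_{d,\overline{c}}^{t}(0)) = 0$, preserving primitivity. Thus $\overline{c}$ satisfies both (1) and (2). There is no serious obstacle here: the argument is essentially a CRT packaging of the local condition from Proposition \ref{prop_prim_per}, and the only mild delicate point is verifying that iteration preserves congruences modulo $\p_i^{k_i+1}$, handled by the one-line binomial expansion above.
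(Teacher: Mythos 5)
Your proof is correct and follows essentially the same route as the paper: choose $\overline{c}\in R$ by the Chinese Remainder Theorem so that $\overline{c}\equiv c_i\pmod{\p_i^{k_i+1}}$, and observe that the primitivity-and-exact-valuation condition at $\p_i$ depends only on $c$ modulo $\p_i^{k_i+1}$. The only cosmetic difference is that you verify this last observation directly by inducting on the iterate, whereas the paper cites Proposition~\ref{prop_prim_per}.
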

\begin{proof}
This is a simple corollary to the Chinese Remainder Theorem \cite[Theorem 5.33]{Jarvis}. Taking an element $\overline{c}\in R$ such that $\overline{c}+\p_i^{k_i+1}=c_i+\p_i^{k_i+1}$, $1\le i\le m$, and noting that $\nu_{\p}(\f^n(0))=r$ where $\p$ is primitive to $\f^n(0)$ if and only if $0$ is periodic for $\f(x)+\p^r[x]$ but $0$ is not periodic for $\f(x)+\p^{r+1}[x]$ yield the proof.
\end{proof}

\section{Correspondence between PCF polynomials in $k_{\p}[x]$ and $R_{\p}[x]$} 
\label{pcf_zp_fp}

  Following the notations as the previous section, we use Theorem \ref{lifting theorem} to give conditions that allow a one to one correspondence between polynomials $\f(x)\in k_{\p}[x]$ with periodic critical orbits and polynomials $\f(x)\in R_{\p}[x]$ with periodic critical orbits.
  
  \begin{Theorem}
	\label{1-1-1-1}
    Fix $d\ge 2$. Let $\p$ be a prime ideal with $\Char(k_{\p})>d$, $n$ a positive integer with $1\leq n\leq N_{K/\Q}(\p)$, such that the following condition holds:
    \begin{align}
    \label{condition_star}
    &\textbf{For all }{c\in k_{\p}}\textbf{, if 0 is periodic with exact period }n\textbf{ for }{\f(x)\in k_{\p}[x]}\textbf{, then}\nonumber\\
    &c\textbf{ is a simple root of }{\f^n(0)\in k_{\p}[c]}.\tag{$*$}
    \end{align}
    Then, there is a one to one correspondence
		\[
		A_n:=\left\{\begin{array}{cc}
			
			\text{$\f(x)\in k_{\p}[x]$ with }
			\\
			\text{periodic critical orbit}
			\\
			\text{of exact period $n$}
		\end{array}\right\}
		\longleftrightarrow
		B_n:=\left\{\begin{array}{cc}
			\text{$\f(x)\in R_{\p}[x]$ with }
			\\
			\text{periodic critical orbit}
			\\
			\text{of exact period $n$}
		\end{array}\right\}\]
		Moreover, for $n > N_{K/\Q}(\p)$, there are no polynomials of the form $\f(x)\in R_{\p}[x]$ with periodic critical orbit of exact period $n$.
\end{Theorem}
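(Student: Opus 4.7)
The plan is to set up the correspondence as the reduction map $\Phi : B_n \to A_n$ sending $\f(x) = x^d + c \in R_{\p}[x]$ to $\widetilde{\f}(x) = x^d + \widetilde{c} \in k_{\p}[x]$ and show it is a well-defined bijection, using Theorem \ref{last_periodic} for well-definedness and Theorem \ref{lifting theorem} (with condition ($*$) substituting for the numerical hypothesis) for both injectivity and surjectivity.

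First I would check that $\Phi$ is well-defined. If $0$ has exact period $n$ for $\f(x) \in R_{\p}[x]$, then $\widetilde{0}$ has exact period some $n' \mid n$ under $\widetilde{\f}(x)$. Applying Theorem \ref{last_periodic} to this $n'$ (valid since $\Char(k_{\p}) = p > d$ and $\nu_{\p}(d) = 0$), either $0$ is wandering for $\f$ or $0$ is periodic for $\f$ of exact period $n'$; the first alternative is excluded by hypothesis, forcing $n' = n$, so $\widetilde{c} \in A_n$. For injectivity, suppose $c_1, c_2 \in B_n$ both reduce to the same $c_0 \in A_n$. Condition ($*$) says $c_0$ is a simple root of the polynomial $\f^n(0) \in k_{\p}[c]$, which is exactly the statement that $\frac{\partial \widetilde{\f}^n(0)}{\partial c}\bigr|_{c = c_0} \ne 0$, i.e. $\nu_{\p}\bigl(\frac{\partial \f^n(0)}{\partial c}\bigr|_{c = c_i}\bigr) = 0$ for $i = 1, 2$. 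Since $\nu_{\p}(f_{d,c_i}^n(0)) = \infty > 0 = 2\nu_{\p}\bigl(\frac{\partial \f^n(0)}{\partial c}\bigr|_{c = c_i}\bigr)$, the uniqueness clause of Theorem \ref{lifting theorem} gives $c_1 = c_2$.

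For surjectivity, start with $c_0 \in A_n$ and pick any lift $\hat{c}_0 \in R_{\p}$ of $c_0$. Then $\widetilde{f_{d,\hat{c}_0}^n}(0) = 0$ while, by ($*$), $\frac{\partial \f^n(0)}{\partial c}\bigr|_{c = \hat{c}_0}$ is a $\p$-adic unit, so the hypothesis of Theorem \ref{lifting theorem} holds trivially. That theorem then produces a unique $\overline{c}_0 \in R_{\p}$ with $\widetilde{\overline{c}_0} = c_0$ for which $0$ has exact period $n$ under $f_{d,\overline{c}_0}$, and $\Phi(\overline{c}_0) = c_0$. Finally, for the closing assertion, if $\f(x) \in R_{\p}[x]$ had $0$ periodic of exact period $n$ then by the same argument as in the well-definedness step, $\widetilde{0}$ would be periodic of exact period $n$ for $\widetilde{\f}(x) \in k_{\p}[x]$; but the orbit of $\widetilde{0}$ lies in $k_{\p}$ which has cardinality $N_{K/\Q}(\p)$, so its period is bounded by $N_{K/\Q}(\p)$, ruling out $n > N_{K/\Q}(\p)$.

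The only non-mechanical step is translating condition ($*$) into the derivative-valuation hypothesis of Theorem \ref{lifting theorem}; I expect this to be the main conceptual point, but it is immediate once one observes that $\f^n(0)$ reduces coefficient-wise modulo $\p$ to $\widetilde{\f}^n(0)$ as polynomials in $c$, and simplicity of a root is detected by non-vanishing of the $c$-derivative. Everything else follows from assembling Theorem \ref{last_periodic}, Theorem \ref{lifting theorem}, and the pigeonhole bound on periods in a finite residue field.
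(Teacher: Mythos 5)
Your proof is correct and is essentially the paper's proof: both rely on Theorem~\ref{last_periodic} to control exact periods under reduction, on Theorem~\ref{lifting theorem} together with condition ($*$) for the Hensel lift, and on the cardinality of $k_{\p}$ for the closing assertion. The only difference is organizational: you take the reduction map $B_n\to A_n$ as the bijection, whereas the paper constructs the inverse lift $A_n\to B_n$, so the roles of the two key theorems merely swap between well-definedness, injectivity, and surjectivity.
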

\begin{Proof}
Let $\{\p+\alpha_i\}_{1\leq i\leq N_{K/\Q}(\p)}$ be the set of cosets of $\p$ in $R_{\p}$. We define the map $\iota: k_{\p}\to R_{\p}$ by $\iota(c_0)=\alpha_i\in R_{\p}$ for some $1\leq i\leq N_{K/\Q}(\p)$ such that $\widetilde{\alpha_i}=c_0$. 

The condition (\ref{condition_star}) implies that if $c_0\in k_{\p}$ is a root of $\f^n(0)\in k_{\p}[c]$, then $\frac{\partial \f^n(0)}{\partial c}|_{c=c_0}\neq 0$. The latter means that if ${\p}$ is a primitive prime divisor of $f_{d,\iota(c_0)}^n(0)$ then $\nu_{\p}\left(\frac{\partial \f^n(0)}{\partial c}|_{c=\iota(c_0)}\right)=0$. In  view of Theorem \ref{lifting theorem}, there is a unique $c_1\in R_{\p}$ such that $\nu_{\p}(c_1-\iota(c_0))>0$ and 0 is periodic for $f_{d,c_1}(x)\in R_{\p}[x]$. The condition that $\nu_p(c_1-\iota(c_0))>0$ implies that $\widetilde{c_1}=\widetilde{\iota(c_0)}=c_0$. With this, we define the map $\psi:A_n\to B_n$, where $\psi(\fo(x))=f_{d,c_1}(x)$ with $c_0$ and $c_1$ are as defined above.

Let $c_0\in k_{\p}$ be such that $\fo\in A_n$. Assume that $\psi(\fo(x))=f_{d,c_i}(x)$, $i=1,2$. Then $\widetilde{c_1}=c_0=\widetilde{c_2}$ and 0 is periodic with exact period $n$ for both $f_{d,c_1}(x)$ and $f_{d,c_2}(x)$. By the uniqueness given in Theorem \ref{lifting theorem}, we get that $c_1=c_2$. So, $\psi$ is a well defined map.

Let $\fo(x), f_{d,c_3}(x)\in A_n$ with $\psi(\fo(x))=\psi(f_{d,c_3}(x))=f_{d,c_2}(x)$. Then we get that $c_0=\widetilde{c_2}=c_3$, i.e, $\psi$ is injective.

Let $f_{d,c_1}(x)\in B_n$. Then $0$ is periodic with exact period $m$ for $\widetilde{f_{d,c_1}}(x)=f_{d,\widetilde{c_1}}(x)$, where $m|n$. By Theorem \ref{last_periodic}, since $\Char(p)>d$, we have that $m=n$, i.e, $f_{d,\widetilde{c_1}}(x)\in A_n$. So, $\psi$ is surjective.

For the case that $n>N_{K/\Q}({\p})$, if $f_{d,c_1}(x)\in B_n$, then again by Theorem \ref{last_periodic}, $\widetilde{f_{d,c_1}}(x)\in A_n$. However, the orbit length in $k_{\p}$ can not be greater than $N_{K/\Q}({\p})$. Thus, $B_n$ is empty.
\end{Proof}
\begin{Corollary}
	\label{1-1-1}
    Fix $d\ge 2$. Let $\p$ be a prime ideal with $\Char(k_{\p})>d$ such that the following holds:
    \begin{align}
    \label{condition_star_star}
    &\textbf{For all }{c\in k_{\p}}\textbf{, if 0 is periodic with exact period }n\textbf{ for }{\f(x)\in k_{\p}[x]}\textbf{ for}\nonumber\\
    &\textbf{some }{n\geq1}\textbf{, then }c\textbf{ is a simple root of }{\f^n(0)\in k_{\p}[c]}.\tag{$**$}
    \end{align}
    Then, there is a one to one correspondence
		\[
		\left\{\begin{array}{cc}
			
			\text{$\f(x)\in k_{\p}[x]$ with }
			\\
			\text{periodic critical orbit}
		\end{array}\right\}
		\longleftrightarrow
		\left\{\begin{array}{cc}
			\text{$\f(x)\in R_{\p}[x]$ with }
			\\
			\text{periodic critical orbit}
		\end{array}\right\}\]
\end{Corollary}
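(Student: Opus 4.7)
The plan is to leverage Theorem \ref{1-1-1-1} period-by-period and then take a disjoint union across all admissible periods. Since every periodic critical orbit has a well-defined exact period, the set on the left-hand side of the claimed correspondence decomposes as $\bigsqcup_{n\geq 1} A_n$, where $A_n$ is the collection of polynomials $\f(x)\in k_{\p}[x]$ whose critical orbit has exact period $n$; similarly, the right-hand side decomposes as $\bigsqcup_{n\geq 1} B_n$. The condition $(**)$ is by construction exactly the statement that condition $(*)$ of Theorem \ref{1-1-1-1} is satisfied for every $n\geq 1$ simultaneously.

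First I would verify that the decompositions above are genuine disjoint unions: an exact period is unique, so no polynomial appears in two different $A_n$'s or two different $B_n$'s. Then, for each $n$ with $1\leq n\leq N_{K/\Q}(\p)$, Theorem \ref{1-1-1-1} furnishes a bijection $\psi_n : A_n \to B_n$, assembled from Hensel-type lifting in the forward direction and reduction modulo $\p$ in the reverse direction. I would then treat the case $n>N_{K/\Q}(\p)$ separately: the right-hand set $B_n$ is empty by the second conclusion of Theorem \ref{1-1-1-1}, and the left-hand set $A_n$ is empty for the elementary reason that $|k_{\p}|=N_{K/\Q}(\p)$, so an orbit in $k_{\p}$ cannot have length strictly greater than $N_{K/\Q}(\p)$.

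Assembling the pieces, I would define $\psi : \bigsqcup_n A_n \to \bigsqcup_n B_n$ by letting $\psi$ coincide with $\psi_n$ on $A_n$ for each $n\leq N_{K/\Q}(\p)$; for $n>N_{K/\Q}(\p)$ the corresponding pieces are empty on both sides, so there is nothing to assign. The map $\psi$ is a bijection because each $\psi_n$ is, and because disjoint unions of bijections between disjoint pieces give a bijection between the totals.

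There is no substantive obstacle here; the only point requiring a brief comment is the preservation of exact period under reduction, which is precisely the content of Theorem \ref{last_periodic} (invoked via Theorem \ref{1-1-1-1}) and is what ensures that a polynomial in $B_n$ reduces to an element of $A_n$ for the \emph{same} $n$, so that the bijections $\psi_n$ glue without mixing periods.
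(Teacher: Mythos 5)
Your proof is correct and follows the paper's own argument: decompose both sides into the disjoint unions $\bigsqcup_n A_n$ and $\bigsqcup_n B_n$, observe that condition $(**)$ supplies condition $(*)$ for each $n$, apply Theorem \ref{1-1-1-1} period-by-period, and glue the resulting bijections. Your extra remarks on disjointness and on the vacuous case $n>N_{K/\Q}(\p)$ merely spell out details the paper leaves implicit.
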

\begin{Proof}
This is a direct corollary of Theorem \ref{1-1-1-1}. Since for each $n\geq 1$, there is a one to one correspondence between $A_n$ and $B_n$, we get the one to one correspondence between the disjoint unions $\underset{1\leq n\leq p}{\cup} A_n$ and $\underset{1\leq n\leq p}{\cup} B_n$.
\end{Proof}

\begin{Remark}
    For $d=2$ and $K=\Q$, the first 50 prime numbers were tested for the condition (\ref{condition_star_star}) in Corollary \ref{1-1-1} using $\Mathematica$ and was found to be satisfied for 47 of these primes. The prime $13$ is a prime that does not satisfy (\ref{condition_star_star}), see Example \ref{bad_ex}.
\end{Remark}

The condition (\ref{condition_star_star}) can be relaxed using Lemma \ref{disc lemma} as follows.
	
	\begin{Corollary}
		\label{one to one}
	Fix $d\ge 2$. Let $\p$ be a prime ideal with $\Char(k_{\p})>d$ such that $\disc_c(\G(c))\not\in\p$ for any $n\leq N_{K/\Q}(\p)$. Then, there is a one to one correspondence
		\[
		\left\{\begin{array}{cc}
			
			\text{$\f(x)\in k_{\p}[x]$ with }
			\\
			\text{periodic critical orbit}
		\end{array}\right\}
		\longleftrightarrow
		\left\{\begin{array}{cc}
			\text{$\f(x)\in R_{\p}[x]$ with }
			\\
			\text{periodic critical orbit}
		\end{array}\right\}\]
	\end{Corollary}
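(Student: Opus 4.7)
The plan is to deduce this corollary from Corollary \ref{1-1-1} by verifying that the discriminant hypothesis forces condition (\ref{condition_star_star}). Since $k_{\p}$ is finite of cardinality $N_{K/\Q}(\p)$, every periodic orbit in $k_{\p}$ has length at most $N_{K/\Q}(\p)$, so it suffices to check (\ref{condition_star_star}) for $n$ in the range $1\le n\le N_{K/\Q}(\p)$, which is precisely the range in which the hypothesis on $\disc_c(\G(c))$ is imposed.

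Fix such an $n$ together with an element $c_0\in k_{\p}$ for which $0$ has exact period $n$ under $f_{d,c_0}(x)\in k_{\p}[x]$; I must show that $c_0$ is a simple root of $f_{d,c_0}^n(0)\in k_{\p}[c]$. Using the factorization $\f^n(0)=\prod_{t\mid n}\Gt(c)$ together with the equivalence of exact period $n$ and formal period $n$ (invoked, as in the proof of Lemma \ref{disc lemma}, via \cite[Theorem 4.5]{dynamicsbook}), one gets $\G(c_0)=0$ while $\Gt(c_0)\neq 0$ for every proper divisor $t$ of $n$. Differentiating the factorization and evaluating at $c_0$, every summand involving $\partial \Gt(c)/\partial c$ with $t\neq n$ is killed by the factor $\G(c_0)=0$, leaving
\[\frac{\partial \f^n(0)}{\partial c}\Big|_{c=c_0}=\frac{\partial \G(c)}{\partial c}\Big|_{c=c_0}\cdot\prod_{\substack{u\mid n\\ u\neq n}}\Gu(c_0).\]
Since the product on the right is a nonzero element of $k_{\p}$, simplicity of $c_0$ as a root of $\f^n(0)$ in $k_{\p}[c]$ is equivalent to simplicity of $c_0$ as a root of $\G(c)$.

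Finally, the hypothesis $\disc_c(\G(c))\not\in\p$ translates, after reduction modulo $\p$, into the statement that the reduced polynomial $\G(c)\in k_{\p}[c]$ has nonzero discriminant, and therefore has only simple roots in any algebraic closure of $k_{\p}$. In particular $c_0$ is a simple root of $\G(c)$, hence of $\f^n(0)$, which verifies condition (\ref{condition_star_star}); Corollary \ref{1-1-1} then supplies the desired bijection. The argument poses no serious obstacle, as the only substantive move is the transfer of simplicity from the single Gleason factor $\G(c)$ to the full product $\f^n(0)$, which is exactly the manipulation already carried out inside the proof of Lemma \ref{disc lemma}.
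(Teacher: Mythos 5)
Your argument is correct and is, in substance, the same one the paper uses: the paper simply cites Lemma \ref{disc lemma} to pass from $\disc_c(\G(c))\not\in\p$ to condition (\ref{condition_star_star}), and then invokes Corollary \ref{1-1-1}. What you have done is unpack Lemma \ref{disc lemma}'s computation (differentiating the factorization $\f^n(0)=\prod_{t\mid n}\Gt(c)$ and using that the nonunique factors are nonvanishing at $c_0$) inline, together with the standard observation that since $\G(c)$ is monic, $\disc_c(\G(c))\not\in\p$ means the reduced Gleason polynomial has simple roots in $k_\p$; that is exactly the bridge the paper leaves to the reader.
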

	
	\begin{Proof}
	    This follows by noting that for all $1\leq n\leq N_{K/\Q}(\p)$, if $\disc_c(\G(c))\not\in\p$, then condition (\ref{condition_star_star}) follows, see Lemma \ref{disc lemma}.
	\end{Proof}

	In Corollary \ref{1-1-1} and \ref{one to one}, if the prime $\Char(k_{\p})>d$ is chosen to satisfy the corresponding condition, then after finitely many steps one may find all polynomials $\f(x)\in R_{\p}[x]$ with periodic critical orbits. 
	
	In a similar fashion, one can prove the following corollary.
	
	\begin{Corollary}
	\label{one_cor}
	    Fix $d\ge 2$. Let $\p$ be a prime ideal with $\Char(k_{\p})>d$, $n$ be a positive integer with $1\leq n\leq N_{K/\Q}(\p)$, such that $\disc_c(\G(c))\not\in\p$. Then there is a one to one correspondence
		\[
		\left\{\begin{array}{cc}
			
			\text{$\f(x)\in k_{\p}[x]$ with }
			\\
			\text{periodic critical orbit}
			\\
			\text{of exact period $n$}
		\end{array}\right\}
		\longleftrightarrow
		\left\{\begin{array}{cc}
			\text{$\f(x)\in R_{\p}[x]$ with }
			\\
			\text{periodic critical orbit}
			\\
			\text{of exact period $n$}
		\end{array}\right\}\]
		Moreover, for $n>N_{K/\Q}(\p)$, there are no polynomials of the form $\f(x)\in R_{\p}[x]$ with periodic critical orbit of exact period $n$.
	\end{Corollary}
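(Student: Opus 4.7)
The plan is to deduce this directly from Theorem \ref{1-1-1-1} by verifying that the hypothesis $\disc_c(\G(c))\notin\p$ forces the fixed-$n$ analogue of condition (\ref{condition_star}), and then to invoke Theorem \ref{last_periodic} for the moreover clause. The structure is exactly parallel to Corollary \ref{one to one}, except that here we only need the discriminant hypothesis for the single value of $n$ under consideration, rather than for every $n\le N_{K/\Q}(\p)$.

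First I would check condition (\ref{condition_star}) for the given $n$. Suppose $c_0\in k_{\p}$ is such that $0$ is periodic of exact period $n$ for $f_{d,c_0}(x)\in k_{\p}[x]$. By \cite[Theorem 4.5]{dynamicsbook} (already used in the proof of Lemma \ref{disc lemma}), exact period $n$ coincides with formal period $n$, so $\widetilde{c_0}$ is a root of $\widetilde{\G}(c)\in k_{\p}[c]$. Now the proof of Lemma \ref{disc lemma} shows that, when $\p$ is a primitive prime divisor of $f_{d,c_0}^n(0)$, the derivative $\frac{\partial \widetilde{\f}^n(0)}{\partial c}|_{c=\widetilde{c_0}}$ vanishes iff $\widetilde{c_0}$ is a repeated root of $\widetilde{\G}(c)$. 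Since $\disc_c(\G(c))\notin\p$, no root of $\widetilde{\G}(c)$ is repeated, hence $c_0$ is a simple root of $\f^n(0)\in k_{\p}[c]$. This is exactly (\ref{condition_star}) for the chosen $n$.

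Next I would apply Theorem \ref{1-1-1-1} to this particular $n$: the conditions $\Char(k_{\p})>d$, $1\le n\le N_{K/\Q}(\p)$, and (\ref{condition_star}) being verified, the theorem provides the desired bijection between $A_n$ and $B_n$. For the moreover clause, suppose $n>N_{K/\Q}(\p)=|k_{\p}|$ and that $f_{d,c_1}(x)\in R_{\p}[x]$ has $0$ periodic of exact period $n$. Then Theorem \ref{last_periodic} (applicable because $\Char(k_{\p})>d$ and $\nu_{\p}(d)=0$) implies that the reduction $\widetilde{f_{d,c_1}}(x)=f_{d,\widetilde{c_1}}(x)\in k_{\p}[x]$ also has $0$ periodic of exact period $n$. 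But the orbit of $0$ in the finite field $k_{\p}$ has length at most $|k_{\p}|<n$, a contradiction.

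I expect no real obstacle: the entire argument is a packaging of results already in place, with the only subtle point being the observation that Lemma \ref{disc lemma} is only stated as a one-way implication, so one must invoke its proof, not merely its statement, to deduce that nonvanishing of the discriminant gives nonvanishing of $\frac{\partial \f^n(0)}{\partial c}|_{c=c_0}$ modulo $\p$ at every root $c_0$ of $\widetilde{\G}$. This is why I would write out the simple-root deduction explicitly rather than cite Lemma \ref{disc lemma} as a black box.
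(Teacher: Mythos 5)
Your proposal is correct and follows essentially the same approach as the paper: verify condition $(*)$ for the given $n$ via (the contrapositive of) Lemma \ref{disc lemma}, invoke Theorem \ref{1-1-1-1} for the bijection, and use Theorem \ref{last_periodic} for the moreover clause. One small remark: since the contrapositive of Lemma \ref{disc lemma} (under the standing hypothesis that $\p$ is primitive for $\fo^n(0)$) directly gives $\nu_{\p}\left(\tfrac{\partial \f^n(0)}{\partial c}|_{c=c_0}\right)=0$ whenever $\disc_c(\G(c))\notin\p$, you do not actually need to re-enter the proof of the lemma; citing it as a black box suffices, which is exactly what the paper does.
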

	\begin{Proof}
	    The proof follows by noting that for all $1\leq n\leq N_{K/\Q}(\p)$, if $\disc_c(\G(c))\not\in\p$, then condition (\ref{condition_star}) holds.
	\end{Proof}
	
	We conclude this section by an example illustrating the findings of Corollary \ref{one to one}.
	\begin{Example}
		For $d=3$, $K=\Q$ and $p=5$,  one notices that
		\[\disc_c(G_{3,i}(c))\equiv 1\not\equiv 0\mod 5, \ 1\le i\le 4,\quad\textrm{whereas }
		\disc_c(G_{3,5}(c))\equiv 4\not\equiv 0\mod 5.\]
		So, in order to find the polynomials of the form $f_{3,c}(x)\in\Z_5[x]$ such that $0$ has a periodic orbit, it suffices to find the polynomials $f_{3,c}(x)\in\mathbb{F}_5[x]$ with $0$ being a periodic point. In $\mathbb{F}_5$, we can easily see that $0$ has period type $(0,1)$ when $c=0$; period type $(0,4)$ when $c=1$ or $4$; and period type $(0,2)$ when $c=2$ or $3$. 
		This means that there are exactly 5 polynomials of the form $f_{3,c}(x)\in\mathbb{F}_5[x]$ such that $0$ is periodic. By Corollary \ref{one to one}, there are exactly $5$ polynomials of the form $f_{3,c}(x)\in\Z_5[x]$ such that $0$ is periodic.
	According to Theorem \ref{last_periodic}, there are no polynomials of the form $f_{3,c}(x)\in\Z_5[x]$ for which $0$ is strictly preperiodic. Consequently,  there are exactly $5$ PCF polynomials in $\Z_p[x]$ of the form $f_{3,c}(x)$.
	\end{Example}

	\section{Bounds on the number of primitive prime divisors}
	\label{sec:bounds}

In this section, we give an elementary upper bound on the count of primitive prime divisors in the critical orbits of the polynomials $\f(x)\in\Q[x]$. 
We use the notations of \cite{Holly_Krieger}. If $\f(x)\in\Q[x]$, we set $c=a/b$ with $\f^n(0)=\frac{a_n}{b^{d^{n-1}}}$ where $a_1:=a,a_n, n\ge 2,b$ are integers. Let $\varrho_d(n,c)$ be the number of primitive prime divisors of $a_n$ and $\omega(a_n)$ be the total number of prime divisors of $a_n$. 
	
\begin{Lemma}
		\label{c<-2}
		Assume that $c\leq-2$ and $d\ge 2$ is even. Then $\log_2|c|\leq \log_2|\f^n(0)|\leq d^{n-1}\log_2|c|$.
	\end{Lemma}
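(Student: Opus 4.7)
The plan is to prove both inequalities simultaneously by induction on $n$, with a preliminary step that tracks the sign of $\f^n(0)$. The hypotheses that $d$ is even and $c \leq -2$ are used crucially in two places: evenness ensures that $(\f^n(0))^d \geq 0$, and the bound $|c| \geq 2$ ensures that the orbit grows at each step rather than collapsing.

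First I would establish the auxiliary claim that $\f^n(0) \geq |c|$ for all $n \geq 1$, and moreover $\f^n(0) > 0$ for all $n \geq 2$. The base case $n=1$ gives $|\f(0)| = |c|$. For $n=2$, since $d$ is even we have
\[
\f^2(0) \;=\; c^d + c \;=\; |c|^d - |c| \;=\; |c|\bigl(|c|^{d-1}-1\bigr) \;\geq\; |c|,
\]
because $|c|^{d-1} \geq 2^{d-1} \geq 2$. For the inductive step, if $\f^n(0) \geq |c| > 0$ for some $n \geq 2$, then $(\f^n(0))^d \geq |c|^d \geq 2|c|$, whence
\[
\f^{n+1}(0) \;=\; (\f^n(0))^d + c \;=\; (\f^n(0))^d - |c| \;\geq\; |c| \;>\; 0.
\]
This immediately yields the lower bound $\log_2 |c| \leq \log_2 |\f^n(0)|$ for all $n \geq 1$.

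For the upper bound, I induct on $n$ and use the sign information just established. The base case is $|\f(0)| = |c| = |c|^{d^0}$. For $n \geq 2$, the auxiliary claim gives $\f^n(0) > 0$ and $(\f^n(0))^d > |c|$, so
\[
|\f^{n+1}(0)| \;=\; (\f^n(0))^d - |c| \;\leq\; (\f^n(0))^d \;\leq\; \bigl(|c|^{d^{n-1}}\bigr)^{d} \;=\; |c|^{d^{n}}.
\]
The transition from $n=1$ to $n=2$ is checked directly: $|\f^2(0)| = |c|^d - |c| \leq |c|^d$. Taking $\log_2$ throughout gives $\log_2 |\f^n(0)| \leq d^{n-1}\log_2 |c|$.

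There is no real obstacle here; the only delicate point is that the naive triangle inequality $|\f^{n+1}(0)| \leq (\f^n(0))^d + |c|$ would overshoot the desired bound by the additive $|c|$ term, so the sign computation $|\f^{n+1}(0)| = (\f^n(0))^d - |c|$ (valid precisely because $d$ is even and the orbit stays positive from the second iterate onward) is essential to close the upper-bound induction cleanly.
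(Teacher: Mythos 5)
Your proof is correct and follows essentially the same inductive approach as the paper: both arguments hinge on the observation that $\f^n(0)^d + c \geq 0$ (so that the absolute value can be dropped), the upper bound from $\f^n(0)^d - |c| \leq \f^n(0)^d$, and the lower bound from $\f^n(0)^d - |c| \geq |c|^d - |c| \geq |c|$. One small imprecision: your auxiliary claim asserts $\f^n(0) \geq |c|$ for all $n \geq 1$, but this fails at $n=1$ where $\f(0) = c < 0$; you clearly intend $|\f^1(0)| = |c|$ and the positivity only from $n \geq 2$, which is how you actually use it, so the argument stands.
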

	\begin{Proof}
		The proof is by induction. The statement is trivial for $n=1$.
		We assume that the statement holds for $n$. Now, we have
		$|\f^n(0)|^d\geq|c|^d\geq|c|=-c$, or $|\f^n(0)|^d+c\geq 0.$
		This implies that
		\begin{align*}
		\log_2|\f^{n+1}(0)|&= \log_2|\f^{n}(0)^d + c|&\leq \log_2|\f^{n}(0)|^d&=d \log_2|\f^{n}(0)|&\leq d\cdot d^{n-1} \log_2|c|&=d^{n} \log_2|c|.
		\end{align*}
		Since $c\leq -2$, we obtain that $\log_2|c^{d-1}+1|= \log_2(|c|^{d-1}-1)\geq \log_2(2^{d-1}-1)\geq 0$. Therefore,
		\begin{align*}
		\log_2|\f^{n+1}(0)|&= \log_2|\f^{n}(0)^d + c|&\geq\log_2|c^d+c|&=\log_2|c(c^{d-1}+1)|&=\log_2|c| + \log_2|c^{d-1}+1|&\geq \log_2|c|.
		\end{align*}
	\end{Proof}
	
	Recall that the height function $h:\Q\to \Z$ is defined by $h(a/b)=\max(|a|,|b|)$.
	\begin{Theorem}
		\label{bounding theorem}
		Let $\f(x)\in\Q[x]$ be a polynomial with infinite critical orbit. Then, 
            \[\varrho_d(n,c)\leq\begin{cases}
			d^{n-1}\log_2|a_1| & c\leq -2\text{ and $d$ is even} \\
			d^{n-1}(3+\log_2|b|)-1 &-2< c< -2^{\frac{1}{d-1}} \text{ and $d$ is even} \\
			(d^{n-1}-1)\log_2|b| + \log_2|a_1| & -2^{\frac{1}{d-1}}< c< 0 \text{ and $d$ is even} \\
			(d^{n-1}-1)(\frac{1}{d-1}+\log_2|b|) + \log_2|a_1| & 0<c<1 \text{; or $-1<c<0$ and $d$ is odd}\\
			d^{n-1}(\frac{1}{d-1}+\log_2|a_1|) - \frac{1}{d-1} & c\geq 1 \text{; or $c\leq-1$ and $d$ is odd}
		\end{cases}\]
		In general, $\varrho_d(n,c)\leq d^{n-1}(3+\log_2 h\left(\frac{a_1}{b})\right)+\log_2|a_1|$.
	\end{Theorem}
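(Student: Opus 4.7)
The backbone of the argument is the crude estimate $\varrho_d(n,c)\le \omega(a_n)\le \log_2|a_n|$: the first inequality is by definition of primitive prime divisors, and the second holds because a nonzero integer with $\omega(a_n)$ distinct prime factors has absolute value at least $2^{\omega(a_n)}$. Since $|a_n|=|\f^n(0)|\cdot |b|^{d^{n-1}}$, each piecewise bound in the theorem is equivalent to a matching upper bound on $\log_2|\f^n(0)|$, and the plan is to derive these five upper bounds by iterating the recursion $\f^{n+1}(0)=\f^n(0)^d+c$ in the relevant range of $c$.

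Case 1 ($c\le -2$, $d$ even) is immediate from Lemma \ref{c<-2}: multiplying $|\f^n(0)|\le |c|^{d^{n-1}}$ by $|b|^{d^{n-1}}$ yields $|a_n|\le |a_1|^{d^{n-1}}$. For Case 5 ($c\ge 1$, or $c\le -1$ with $d$ odd), I would first prove by induction that $|\f^n(0)|\ge |c|\ge 1$, so that $\f^n(0)^d$ and $c$ have the same sign and $|\f^{n+1}(0)|=|\f^n(0)|^d+|c|\le 2|\f^n(0)|^d$. Setting $u_n=\log_2|\f^n(0)|$, the linear recursion $u_{n+1}\le du_n+1$ solves to $u_n+\frac{1}{d-1}\le d^{n-1}(u_1+\frac{1}{d-1})$, and transferring to $|a_n|$ via $|b|^{d^{n-1}}$ gives the stated bound. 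Case 4 ($|c|<1$ in the designated parity ranges) uses the same sign analysis to write $|\f^{n+1}(0)|=|\f^n(0)|^d+|c|$, but now $|c|^d\le |c|$ allows the ansatz $|\f^n(0)|\le |c|\cdot 2^{a_n}$ with $a_{n+1}\le da_n+1$ and $a_1=0$, yielding $a_n\le (d^{n-1}-1)/(d-1)$.

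Case 3 ($-2^{1/(d-1)}<c<0$, $d$ even) is a boundedness statement: from $|c|^{d-1}<2$ one gets $|c|^d<2|c|$, and combined with $\f^n(0)^d\ge 0$ (since $d$ is even) a direct induction shows $-|c|\le \f^n(0)<|c|$ for all $n\ge 1$. Hence $|\f^n(0)|\le |c|=|a_1|/|b|$, and multiplying by $|b|^{d^{n-1}}$ produces the stated bound $(d^{n-1}-1)\log_2|b|+\log_2|a_1|$.

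I expect Case 2 ($-2<c<-2^{1/(d-1)}$, $d$ even) to be the main obstacle, since $|c|<2$ forbids the Case 1 estimate while $|c|^{d-1}>2$ prevents the boundedness of Case 3, and the orbit of $0$ typically escapes without the clean monotone behavior of Case 5. Here I would use the crude bound $|\f^{n+1}(0)|\le |\f^n(0)|^d+2$ and induct directly on the target inequality $u_n\le 3d^{n-1}-1$: the base $\log_2|c|<1\le 2$ is immediate, and the inductive step reduces to verifying $2^{du_n}+2\le 2^{3d^n-1}$, which holds because $du_n\le 3d^n-d$ together with $d\ge 2$ gives $2^{du_n}+2\le 2^{3d^n-d}+2^{3d^n-d}=2^{3d^n-d+1}\le 2^{3d^n-1}$. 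Finally, the general bound $d^{n-1}(3+\log_2 h(a_1/b))+\log_2|a_1|$ is obtained by checking term-by-term that it dominates each of the five piecewise bounds, using $\log_2|a_1|,\log_2|b|\le \log_2 h(a_1/b)$, $|a_1|\ge 1$, and $\tfrac{1}{d-1}\le 1$.
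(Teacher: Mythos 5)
Your proposal is correct and establishes all five piecewise bounds, but it takes a more self-contained route than the paper. Both proofs start from the same crude estimate $\varrho_d(n,c)\le \omega(a_n)\le\log_2|a_n|$ and the decomposition $\log_2|a_n|=\log_2|\f^n(0)|+d^{n-1}\log_2|b|$, and both use Lemma \ref{c<-2} for the $c\le -2$ case. After that the approaches diverge: the paper outsources the three nontrivial estimates on $|\f^n(0)|$ (namely $\log_2|\f^n(0)|\le 3d^{n-1}-1$ for $-2<c<-2^{1/(d-1)}$, the boundedness $|\f^n(0)|\le|c|$ for $-2^{1/(d-1)}<c<0$, and the bounds for $0<|c|<1$ and $|c|\ge 1$) to Lemma 3.1, p.~5519, and Lemma 5.5 of Krieger's paper, and it disposes of the odd-degree, negative-$c$ cases via the symmetry $\f^n(0)=-f_{d,-c}^n(0)$. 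You instead re-derive each of these estimates from scratch by short inductions on the recursion $\f^{n+1}(0)=\f^n(0)^d+c$, with explicit sign bookkeeping in place of the symmetry reduction. Your inductions are all correct: the linear recursion $u_{n+1}\le du_n+1$ and its closed form $u_n+\tfrac{1}{d-1}\le d^{n-1}(u_1+\tfrac{1}{d-1})$ handle Case 5, the ansatz $|\f^n(0)|\le |c|\cdot 2^{e_n}$ with $e_{n+1}=de_n+1$, $e_1=0$ handles Case 4, the invariant $-|c|\le\f^n(0)<|c|$ handles Case 3, and the direct induction on $u_n\le 3d^{n-1}-1$ with the estimate $|\f^{n+1}(0)|\le|\f^n(0)|^d+2$ handles the Case 2 that you correctly identify as the delicate one. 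The trade-off is standard: the paper's proof is shorter but requires the reader to have Krieger's bounds in hand, while yours is longer but keeps the argument elementary and verifiable in place. One cosmetic note: in your Case 4 you re-use the symbol $a_n$ for the exponent sequence, which collides with the paper's use of $a_n$ for the numerator of $\f^n(0)$; a different letter would avoid confusion.
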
	
		\begin{Proof}
		It is clear that $\varrho_d(n,c)\leq \omega(a_n)\leq \log_2|a_n|$. So, it is sufficient to bound $\log_2|a_n|$.
		For $c\leq-2$ and $d$ is even, Lemma \ref{c<-2} gives that
		$\log_2|a_n|-d^{n-1}\log_2|b|=\log_2|\f^n(0)|\leq d^{n-1}\log_2|c|$. In other words,
		$\log_2|a_n|\leq d^{n-1}\log_2|a_1|.$
		
		For $-2<c<-2^{\frac{1}{d-1}}$ and $d$ is even, we use the following identity that can be found in \cite[p. 5519]{Holly_Krieger}
		\[\log_2|a_n|-d^{n-1}\log_2|b|=\log_2|\f^n(0)|\leq (3d^{n-1}-1).\]
		This implies that
		$\log_2|a_n|\leq d^{n-1}(3+\log_2|b|)-1$.
		
		For $-2^{\frac{1}{d-1}}< c<0$ and $d$ is even, we use the identities of \cite[Lemma 3.1, p. 5518]{Holly_Krieger}, namely, $|\f^n(0)|\leq|c|$. Therefore,
		\[\log_2|a_n|\leq d^{n-1}\log_2|b|+\log_2|a_1|-\log_2|b|=(d^{n-1}-1)\log_2|b|+\log_2|a_1|.\]
		For an odd $d$, if $c<0$, then it is clear that $\f^n(0)=-f_{d,-c}^n(0)$. Thus, we may assume without loss of generality that $c$ is positive.
		
		For $0<c<1$, by \cite[Lemma 5.5]{Holly_Krieger}, we have
		\[\log_2|a_n|\leq d^{n-1}\log_2|b|+\frac{d^{n-1}-1}{d-1}+\log_2|a_1|-\log_2|b|=(d^{n-1}-1)\left(\frac{1}{d-1}+\log_2|b|\right)+\log_2|a_1|.\]
		Finally, when $c\geq 2$, we again use \cite[Lemma 5.5]{Holly_Krieger} to obtain
		\[\log_2|a_n|\leq d^{n-1}\log_2|b|+\frac{d^{n-1}-1}{d-1}+d^{n-1}(\log_2|a_1|-\log_2|b|)=d^{n-1}\left(\frac{1}{d-1}+\log_2|a_1|\right)-\frac{1}{d-1}.\]
	\end{Proof}
	Theorem \ref{bounding theorem} gives rise to a bound that depends on the degree $d$, the iteration number $n$, and the value of $c$. The dependency on $d$ and $n$ seems reasonable, even though the bound might not be optimal. However, the dependency on $c$ raises the following question.
	\begin{Question}
	\label{question4}
	Fix $d\geq 2$ and $n\geq 1$. Is there a uniform bound on the number of primitive prime divisors of $\f^n(0)$, $\varrho_d(n,c)$, that doesn't depend on the value of $c$?
	\end{Question}
		
For the case $n=1$; or $n=2\text{ and $d$ is even}$, choosing $c=p_1\cdot \dots\cdot p_r$; or $c=p_1\cdot \dots\cdot p_r-1$, respectively, implies that $\f^n(0)$. possesses $r$ primitive prime divisors. Therefore, the bound must depend on $c$ for $n=1,2$. However, for $n\ge 3$, the answer is not as trivial. 		
	
	\section{Density Questions on Primes in Critical Orbits}
\label{link_sec}	
		
In this section, we study the density of the following set 
\[{A_{d,n}^t:=\{\p:\p\text{ is a primitive prime divisor of } \f^n(0) \text{ and } \nu_{\p}(\f^n(0))=t \text{ for some } c\in K\}},\ t\ge 1.\]
We first note that the density of this set doesn't depend on $t$. In fact, by Corollary \ref{power_change}, the density of $A_{d,n}^t$ is the same as the density of the set 
\[{A_{d,n}:=\{\p:\p\text{ is a primitive prime divisor of } \f^n(0) \text{ for some } c\in K\}}.\]

This is because if $\p\in A_{d,n}$ and $\disc_c(\G(c))\not\in\p$, then Corollary $\ref{power_change}$ shows that there is an integer $c_t$ such that $\nu_{\p}(f_{d,c_t}^n(0))=t$. This in turn implies that $\p\in A_{d,n}^t$. Denoting $B_{d,n}$ to be the set of primes $\p$ dividing $\disc_c(\G(c))$, then we have that $A_{d,n}\setminus B_{d,n}\subseteq A_{d,n}^t\subseteq A_{d,n}$ for any $t\geq 1$. Since the set of primes dividing a certain discriminant is finite, then the densities of the two sets $A_{d,n}^t$ and $A_{d,n}$ in the set of all primes are the same.

As seen in Proposition \ref{prop_prim_per}, we have that $A_{d,n}$ may be described alternatively as \[\{\p:\textrm{ there exists $c\in k_{\p}$ such that the critical orbit of } \f(x)\in k_{\p}[x] \textrm{ is periodic with exact period } n \}.\]
As seen in \cite[Theorem\ 4.5]{dynamicsbook}, if $0$ is a periodic point of $\f(x)$, then  $0$ has formal period $n$ if and only if $0$ has exact period $n$. Therefore, we may define $A_{d,n}$ as
\[{A_{d,n}:=\{\p:\text{ there exists } c\in k_{\p}\text{ such that }\widetilde{\G}(c)=0\}}.\]
It follows that the density $\delta(A_{d,n})$ of $A_{d,n}$ is equal to the fraction, $\FPP_{d,n}$, of the elements of the Galois group of $\G(c)$ that fix at least one root, see \cite[Theorem 9.15]{Frobenius}.  Denoting the splitting field of $\G(c)$ by $\K_{d,n}$, it follows that
\[\delta(A_{d,n})=\FPP_{d,n}:=\frac{\#\{\sigma\in\Gal(\K_{d,n}/K):\sigma\text{ fixes at least one root of }\G(c)\}}{\#\Gal(\K_{d,n}/K)}.\]
\begin{Proposition}
\label{gen_den}
For all $d\geq 2$ and $n\geq 1$, we have
	\[\delta(A_{d,n})>0.\]
\end{Proposition}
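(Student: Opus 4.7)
The plan is to invoke directly the identity $\delta(A_{d,n})=\FPP_{d,n}$ recalled in the paragraph immediately preceding the proposition, and then simply observe that the identity element of $\Gal(\K_{d,n}/\Q)$ contributes to the numerator of $\FPP_{d,n}$. Because the Chebotarev-type input is already packaged in that formula, positivity will come out as an essentially formal consequence.

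Concretely, the first step is to check that the Gleason polynomial $\G(c)\in \Q[c]$ is a nonzero polynomial of positive degree for every $d\geq 2$ and $n\geq 1$. For $n=1$ this is transparent since $G_{d,1}(c)=c$, and for $n\geq 2$ it is part of the standard theory of dynatomic polynomials referenced in the introduction. Consequently, $\K_{d,n}/\Q$ is a finite Galois extension, so $\#\Gal(\K_{d,n}/\Q)$ is finite, and $\G(c)$ has at least one root in $\K_{d,n}$.

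The second step is the key observation: the identity element of $\Gal(\K_{d,n}/\Q)$ fixes \emph{every} root of $\G(c)$, hence in particular at least one root. Therefore the cardinality appearing in the numerator of the defining expression of $\FPP_{d,n}$ is at least $1$, which yields
\[
\delta(A_{d,n})\;=\;\FPP_{d,n}\;\geq\;\frac{1}{\#\Gal(\K_{d,n}/\Q)}\;>\;0,
\]
completing the argument.

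There is no genuine obstacle in the proof itself: all of the substantive content of the proposition is already absorbed into the reformulation $\delta(A_{d,n})=\FPP_{d,n}$, which in turn rests on Chebotarev's density theorem (or equivalently the Frobenius density theorem, cited as \cite[Theorem 9.15]{Frobenius}) together with the reinterpretation of $A_{d,n}$ as the set of primes $p$ for which $\widetilde{\G}(c)$ admits a root in $\Fp$. Once this reformulation is in hand, the positivity of $\FPP_{d,n}$ is entirely forced by the presence of the identity in the Galois group.
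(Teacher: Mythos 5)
Your proof is correct and takes essentially the same approach as the paper: both arguments note that the identity element of $\Gal(\K_{d,n}/\Q)$ fixes at least one root of $\G(c)$, so the numerator of $\FPP_{d,n}$ is at least $1$. The only cosmetic difference is that the paper further bounds $\#\Gal(\K_{d,n}/\Q)$ above by $D_{d,n}!$ via the embedding $\Gal(\K_{d,n}/\Q)\leq S_{D_{d,n}}$, obtaining $\FPP_{d,n}\geq 1/D_{d,n}!$, whereas you keep the (marginally sharper) bound $1/\#\Gal(\K_{d,n}/\Q)$; either suffices.
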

\begin{Proof}
Set $D_{d,n}:=\deg_c(G_{d,n}(c))$. Since $\Gal(\K_{d,n}/K)\leq S_{D_{d,n}}$, where $S_m$ is the symmetric group on $m$ elements,  and there is at least one element in $\Gal(\K_{d,n}/K)$
fixing at least one root of $G_{d,n}(c)$, namely the identity element, it follows that 
$\FPP_{d,n} \geq\frac{1}{D_{d,n}!}>0.$
\end{Proof}
The following consequence follows.
\begin{Corollary}
\label{infinite_primes}
For all $d\geq 2$ and $n\geq 1$, there are infinitely many primes $\p$ for which there is $c\in K$ such that $\p$ is a primitive prime divisor of $\f^n(0)$.
\end{Corollary}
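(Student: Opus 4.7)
The plan is to deduce the corollary directly from Proposition \ref{gen_den} by invoking the elementary fact that any set of primes with positive natural density must be infinite. Concretely, I would argue as follows.

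First, I would recall the definition of density used in the paper: for a set $S$ of primes of $\Q$, one has
\[\delta(S)=\lim_{x\to\infty}\frac{\#\{p\in S:p\le x\}}{\#\{p:p\le x\}},\]
whenever the limit exists. If $S$ were finite, then the numerator would be bounded by a constant $|S|$ as $x\to\infty$, while by the prime number theorem the denominator tends to infinity. Hence the ratio would tend to $0$, and $\delta(S)=0$.

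Next, I would apply this contrapositively to the set $A_{d,n}$: by Proposition \ref{gen_den}, $\delta(A_{d,n})\ge \frac{1}{D_{d,n}!}>0$, so $A_{d,n}$ cannot be finite. Unpacking the definition of $A_{d,n}$, this means that there are infinitely many primes $p$ for which some $c\in\Q$ makes $p$ a primitive prime divisor of $\f^n(0)$, which is exactly the desired statement.

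I do not foresee any real obstacle: all the substantive work has already been done in Proposition \ref{gen_den} (which in turn relied on the Chebotarev-type density computation via $\FPP_{d,n}$ and the trivial lower bound coming from the identity element of the Galois group). The corollary itself is a one-line consequence, whose only content beyond Proposition \ref{gen_den} is the observation that positive density forces infinitude.
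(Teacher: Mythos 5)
Your proof is correct and matches the paper's intent exactly: the paper presents this corollary as an immediate consequence of Proposition \ref{gen_den}, and you have simply spelled out the standard observation that a set of primes with positive density cannot be finite. Nothing further is needed.
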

In fact, if $d=2$ and $K=\Q$, then $\delta(A_{d,n})$ can be explicitly computed under certain conditions.

\begin{Theorem}
\label{deg_2_density}
Let $\K_{2,n}$ be the splitting field of $G_{2,n}(c)$, $n\ge 1$. If $\Gal(\K_{2,n}/\Q)\cong S_{D_{2,n}}$, then
		\[\delta(A_{2,n}) = \sum_{i=1}^{D_{2,n}}\frac{(-1)^{i+1}}{i!}.\]
		In particular, this identity holds unconditionally for any $n\le 11$.
\end{Theorem}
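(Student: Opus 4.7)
The plan is to start from the identity $\delta(A_{2,n})=\FPP_{2,n}$ established right before the theorem, so the problem reduces to a purely group-theoretic computation: determining the proportion of elements in $\Gal(\K_{2,n}/\Q)$ that fix at least one root of $G_{2,n}(c)$. Under the hypothesis that $\Gal(\K_{2,n}/\Q)\cong S_{D_{2,n}}$, the Galois action on the roots is the standard action of $S_m$ on $\{1,\dots,m\}$ with $m=D_{2,n}$, so what I need is the fraction of permutations in $S_m$ having at least one fixed point.

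To compute that fraction, I would apply inclusion–exclusion on the events $E_j=\{\sigma\in S_m:\sigma(j)=j\}$. The number of permutations fixing a prescribed set of $i$ points is $(m-i)!$, so
\[
\#\bigl(E_1\cup\cdots\cup E_m\bigr)=\sum_{i=1}^{m}(-1)^{i+1}\binom{m}{i}(m-i)!=m!\sum_{i=1}^{m}\frac{(-1)^{i+1}}{i!}.
\]
Dividing by $m!=\#S_m$ yields $\FPP_{2,n}=\sum_{i=1}^{D_{2,n}}(-1)^{i+1}/i!$, which is the desired identity. Equivalently one may observe that the complementary set consists of derangements, whose count is $!m=m!\sum_{i=0}^{m}(-1)^i/i!$, giving the same conclusion after subtracting from $1$.

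For the final assertion that the identity holds unconditionally for all $n\le 11$, I would invoke the known fact that the Gleason polynomial $G_{2,n}(c)\in\Q[c]$ has Galois group the full symmetric group $S_{D_{2,n}}$ in this range. This has been verified by direct computation by several authors (in particular work of Bousch and Morton on period polynomials of the map $z\mapsto z^2+c$); it suffices to cite such a verification and apply the first part of the theorem.

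The main obstacle is not the derangement computation, which is routine, but rather the group-theoretic hypothesis: verifying $\Gal(\K_{2,n}/\Q)\cong S_{D_{2,n}}$ without hypothesis. This is widely conjectured for all $n\ge 1$ but presently accessible only through case-by-case computation, which is why the unconditional statement has to stop at $n=11$. Once one accepts that input from the literature, the rest of the proof is the short inclusion–exclusion argument outlined above.
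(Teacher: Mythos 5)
Your proof is essentially the same as the paper's: both reduce $\delta(A_{2,n})=\FPP_{2,n}$ to counting permutations in $S_{D_{2,n}}$ with at least one fixed point and then apply the standard derangement formula (whether phrased via inclusion--exclusion or its complement makes no mathematical difference). The one place you depart from the paper is the justification of the unconditional claim for $n\le 11$: you appeal to the literature (attributing the verification to work of Bousch and Morton), whereas the paper obtains $\Gal(\K_{2,n}/\Q)\cong S_{D_{2,n}}$ for $1\le n\le 11$ by a direct \Magma computation, and in fact notes that even irreducibility of $G_{2,n}$ for general $n$ is only conjectural. You should double-check that the reference you cite actually establishes the full symmetric Galois group (not merely irreducibility) in this range; otherwise the safest route is the computational verification the paper uses.
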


\begin{proof}
The proof goes down to finding the proportion of elements of $S_{D_{2,n}}$ that fix at least one element in the set $\{1,\dots,D_{2,n}\}$. Since the number of permutations fixing no element of $T$ is 
$D_{2,n}!\sum_{i=0}^{D_{2,n}}(-1)^i/i!$, the result follows. 

One may use \Magma, \cite{Magma}, to verify that the Galois group of $G_{2,n}(c)$ is isomorphic to $S_{D_{2,n}}$ for all $1\leq n\leq 11$. 
\end{proof}

	We remark that due to the exponential growth of the degree $D_{2,n}$ of $G_{2,n}(c)$, \Magma was unable to calculate the Galois group $\Gal(\K_{2,n}/\Q)$ for $n>11$. We also remind the reader that the irreducibility of $G_{2,n}(c)$ over $\Q$ was conjectured in \cite[Conjecture 1.4]{Gleason} for all $n\geq 1$, however, no proof is established yet.
		
\begin{Remark}
If $\Gal(\K_{2,n}/\Q)\cong S_{D_{2,n}}$ for all $n$ large enough, then
 $$\lim_{n\to\infty}\delta(A_{2,n})=\lim_{n\to\infty}\FPP_{2,n} = 1-\frac{1}{e},\qquad\textrm{and}$$ 
		\[|\delta(A_{2,n})-1+\frac{1}{e}|\leq \frac{1}{(D_{2,n} + 1)!}\leq \frac{1}{2^{n-2}!}\ \ \ for\ \ \ n\geq 2.\]
The limit is a direct consequence of the fact that $e^x=\sum_{i=0}^{\infty}  \frac{x^{i}}{i!}$.
The first inequality comes from the error term expression of an alternating series. For the second inequality, the cases $n\leq 4$ can be checked easily. For $n>4$, we have that $n-2>\frac{n}{2}$, so
\begin{align*}
D_{2,n} &=\sum_{m|n}\mu\left(\frac{n}{m}\right) 2^{m-1}\geq 2^{n-1} - \sum_{m|n, \ m\neq n} 2^{m-1}\geq 2^{n-1} - \sum_{m=1}^{ \frac{n}{2}} 2^{m-1}\geq 2^{n-1} - \sum_{m=1}^{n-2} 2^{m-1}
\end{align*}
It follows that $D_{2,n}\ge   2^{n-2}$, hence the inequality follows.
\end{Remark}

 We note that the assumption on the Galois group in Theorem \ref{deg_2_density} does not hold in general when $d>2$. In fact, for $d=3$, one can see that $D_{3,3}=8$ giving that $|S_{D_{3,3}}|=8!=40320$. On the other hand, using $\Magma$, one can see that $|\Gal(\K_{3,3}/\Q)|=192$. Moreover, for $d\geq 4$, one can see that $G_{d,2}=c^{d-1}+1=\frac{c^{2(d-1)}-1}{c^{(d-1)}-1}$ splits in the cyclotomic field $\Q(\zeta_{2(d-1)})$ where $\zeta_{2(d-1)}$ is a primitive $2(d-1)$-th root of unity. This field has a Galois group of order $\phi(2(d-1))$ where $\phi$ is the Euler totient function. However, it is easy to see that for $d-1\geq 3$, one has $\phi(2(d-1))\leq (d-1)<2(d-1)\leq(d-1)!=D_{d,2}!$, in particular, $|\Gal(\K_{d,2}/\Q|<|S_{D_{d,2}}|$.
\section{Polynomials with Arbitrarily Many Primitive Prime Divisors}

In this section, we tackle Question \ref{question4}. First, we introduce the following generalization of Theorem \ref{first_theorem}.
\begin{Theorem}
	\label{last theorem}
	Let $K$ be a number field. Fix integers $d\ge 2$ and $m\ge 1$. For $1\le i\le m$, let
	\begin{enumerate}
	    \item $n_i$ be distinct positive integers,
	    \item $t_i$ be positive integers,
	    \item $(k_{i,1},k_{i,2},\dots,k_{i,t_i})$ be $t_i$-tuples of positive integers. 
	    \item $P$ be a set of primes of density zero in the set of all primes.
	\end{enumerate}
	Then, there exists $c\in R$ such that
	for each $1\leq i\leq m$ and $1\leq j\leq t_i$, there is a primitive prime divisor $\p_{i,j}$ of $f_{d,c}^{n_i}(0)$ with $\nu_{\p_{i,j}}(f_{d,c}^{n_i}(0))={k_{i,j}}$ and $\p_{i,j}\not\in P$. 
\end{Theorem}
\begin{Proof}
	We choose $i$, $1\le i\le m$. As mentioned in Corollary \ref{infinite_primes}, there exists infinitely many primes $\p$ that can appear as primitive prime divisors in the iteration $\f^{n_i}(0)$ for some $c\in K$. Let the set of these primes be $S_{n_i}$, which by Proposition \ref{gen_den} has a density greater than zero. The finite set of primes dividing $\disc_c(G_{d,n_i}(c))$ will be denoted $T_{n_i}$.
	
	Setting $P_{n_i}:=S_{n_i}\setminus (T_{n_i}\cup P)$, we recursively define the following sets 
	\[A_1\subset P_{n_1} \text{ with } |A_1|=t_1,\text{ and } B_1=A_1,\]
	\[A_i\subset P_{n_i}\setminus B_{i-1} \text{ with } |A_i|=t_i,\text{ and } B_i=B_{i-1}\cup A_i,\quad 2\leq i\leq m\]
	It is clear that $B_{m}$ is a set of distinct rational primes. 
	Also, $B_m$ is the union of the disjoint sets $A_i$'s. 
	
	Let $A_i=\{\p_{i,j}\}_{j=1}^{t_i}$. For each $\p_{i,j}\in A_i\subset P_{n_i}$, we know that $\p_{i,j}$ is a primitive prime divisor of $f_{d,c_{i,j,0}}^{n_i}(0)$ for some $c_{i,j,0}\in K$.
	Since $\p_{i,j}\not\in T_{n_i}$, it follows that $ \disc_c(G_{d,n_i}(c))\not\in \p_{i,j}$. By Corollary \ref{power_change}, there exists an integer $c_{i,j,1}$ such that $\p_{i,j}$ is a primitive prime divisor for $f_{d,c_{i,j,1}}^{n_i}(0)$, and $\nu_{\p_{i,j}}(f_{d,c_{i,j,1}}^{n_i}(0))={k_{i,j}}$. 
	
	We now set $C_i=\{c_{i,j,1}\}_{1\leq j\leq t_i}$ and $C=\underset{i=1}{\overset{m}{\cup}}C_i$.
	For the set of primes $B_m$, the set $C$ satisfies the hypothesis of Theorem \ref{collecting theorem}. Therefore, we get the desired $c\in R$. 
\end{Proof}
The proof of Theorem \ref{last theorem} provides an explicit description of how to find $c$. In what follows, we workout the details of an example.
\begin{Example}
	Fix $K=\Q$, $d=2$ and $m=3$. For $1\leq i\leq 3$, let $n_i$, $t_i$, and $(k_{i,1},k_{i,2},\dots,k_{i,t_i})$ be as follows:
	\begin{flalign*}
	 &n_1=2,&  &t_1=3, & &(k_{1,1},k_{1,2},k_{1,3})=(29,17,5),&\\
	 &n_2=3,&  &t_2=2,&  & (k_{2,1},k_{2,2})=(8,3),&\\
&n_3=4,& & t_3=1,&     & (k_{3,1})=(21).&
	 \end{flalign*}

	We first check the primes dividing the discriminants:
	\[T_2=\emptyset,\ T_3=\{23\},\ \text{and }\ T_4=\{23,2551\}.\]
	We can now find some of the primes in $S_2,S_3,$ and $S_4$. This can be accomplished by using the primes appearing as primitive divisors in the critical orbits of some polynomials. For example,
	\[f_{2,1}^2(0)=2,\ f_{2,1}^3(0)=5,\ and\ f_{2,1}^4(0)=2\cdot13.\]
	Therefore, we can set $p_{1,1}=2$ with $c_{1,1,0}=1$. Using Corollary \ref{power_change}, we obtain $c_{1,1,1}=2^{29}-1$.
	Continuing in the same manner, we can get
	\[A_1=\{2,3,7\}\ \text{with}\ C_1=\{2^{29}-1,3^{17}-1,7^5-1\},\]
	\[A_2=\{5,19\}\ \text{with}\ C_2=\{326391,4866\},\]
	\[A_3=\{13\}\ \text{with}\ C_3=\{198396633106433791392520\}.\]
	Using Theorem \ref{collecting theorem}, we obtain 
	 \[f(x):=x^2+24351981847787737533052341852056330671894786203451391,\]
	for which we have 
	\begin{flalign*}
&2^{29}||f^2(0),\ 3^{17}||f^2(0),\ 7^{5}||f^2(0),&
&5^{8}||f^3(0),\ 19^{3}||f^3(0),&
	&13^{21}||f^4(0)&
	\end{flalign*}
	with each of the mentioned primes being a primitive prime divisor for the corresponding iteration.
\end{Example}
Theorem \ref{last theorem} gives the following answer to Question \ref{question4}. 
	\begin{Corollary}
		\label{last corollary}
		Fix $d\ge 2$. Let $U=\{(n_i,t_{i})\}_{i=1}^m$ be a finite set of pairs of positive integers. Then there exists $c\in R$ such that, $f_{d,c}^{n_i}(0)$ has at least $t_{i}$ primitive prime divisors for each $1\leq i\leq m$.
	\end{Corollary}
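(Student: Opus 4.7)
The plan is to obtain this corollary as an immediate specialization of Theorem \ref{last theorem}, which already packages all the machinery we need. Given the input $U=\{(n_i,t_i)\}_{i=1}^m$, I would apply Theorem \ref{last theorem} with the same integers $n_i$ and $t_i$, any choice of multiplicity tuples (for concreteness take $(k_{i,1},\dots,k_{i,t_i})=(1,\dots,1)$), and the empty set of excluded primes $P=\emptyset$, which trivially has density zero in the set of all primes. The conclusion of Theorem \ref{last theorem} then produces an integer $c$ together with primes $p_{i,j}$, for $1\le i\le m$ and $1\le j\le t_i$, each of which is a primitive prime divisor of $\f^{n_i}(0)$. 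The extra multiplicity information $p_{i,j}\,\|\,\f^{n_i}(0)$ is irrelevant for the present statement and can simply be discarded.

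The only point requiring a moment's attention is that, for each fixed $i$, the primes $p_{i,1},\dots,p_{i,t_i}$ are pairwise distinct, since otherwise we could not conclude there are \emph{at least} $t_i$ primitive prime divisors. This distinctness is built into the construction used inside Theorem \ref{last theorem}: there the set $A_i=\{p_{i,j}\}_{j=1}^{t_i}$ is chosen as a $t_i$-element subset of the positive-density pool $P_{n_i}$ of admissible primes, and the recursive definition even guarantees that the $A_i$ are pairwise disjoint across different $i$, which is stronger than what is needed here. With distinctness granted, $\f^{n_i}(0)$ indeed has at least $t_i$ primitive prime divisors for each $1\le i\le m$, which is exactly the claim.

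I do not foresee any substantive obstacle in this step: the hard work has been carried out upstream. The existence of infinitely many primes that can appear as primitive divisors of $\f^{n}(0)$ for some $c$ comes from Corollary \ref{infinite_primes} together with the positive density statement Proposition \ref{gen_den}; the flexibility to prescribe multiplicities comes from Corollary \ref{power_change}; and the simultaneous realization of all the prescribed divisibility data by a single rational $c$ comes from the Chinese-remainder-style assembly in Theorem \ref{collecting theorem}. Theorem \ref{last theorem} bundles these three ingredients, and Corollary \ref{last corollary} is simply the statement obtained by projecting that conclusion onto the count of primitive prime divisors.
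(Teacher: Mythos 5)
Your proposal is correct and takes essentially the same approach as the paper: the corollary is stated without a written proof because it is exactly the specialization of Theorem \ref{last theorem} with all multiplicities $k_{i,j}=1$ and $P=\emptyset$, and your observations about the distinctness of the primes within each $A_i$ (and across the $A_i$) being built into the theorem's construction are accurate.
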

	Fixing the degree $d$ and the iteration $n$, Corollary \ref{last corollary} implies the existence of a polynomial of the form $\f(x)$, $c\in R$, such that $\f^n(0)$ has arbitrarily many primitive prime divisors. This implies that the upper bound on the number of primitive prime divisors of $\f^n(0)$, see Theorem \ref{bounding theorem}, can not be independent of $c$. Thus, the answer to Question \ref{question4} is negative. In particular, there does not exist a uniform bound on the counting function $\varrho_d(n,c)$ that does not depend on $c$.
\begin{Example}
	In Corollary \ref{last corollary}, we take $K=\Q$, $d=2$ and $U=\{(3,33)\}$. One can check using \Magma that for the polynomial 
	\[f(x):=x^2+13443222075617361812453920142397689133847531746492684885069771,\]
	we have
	\[70321927694409533965768410131069970323274232658951676172460495|f^3(0),\]
	where this divisor is a square free number with 33 prime factors. Each of these factors is a primitive prime divisor for $f^3(0)$. In fact, one may see that there are exactly 37 primitive prime divisors of $f^3(0)$.	
\end{Example}
Let $g\in K[x]$ be a polynomial dividing an iterate of $\f\in K[x]$. We write $H_n(f,g):=\Gal(K_n/K_{n-1})$ where $K_n$ is the splitting field of $g\circ f^{n}$. One knows that $H_n(f,g)\isom (\Z/d\Z)^m$ for some $0\leq m\leq \deg(\f^{n-1})=d^{n-1}$ with $H_n(f,g)$ being maximal when $m=d^{n-1}$, see \cite{Rafe_jones}. The group $H_n(f,g)$ is said to be maximal if $H_n(f,g)\cong (\Z/d\Z)^{\deg(g\circ f^{n-1})}$. We now refer to the following result. 
\begin{Theorem}{\cite[Theorem 4.3]{Rafe_jones}}
\label{maximal_galois}
Let $d\geq2$ be an integer, $K$ a global field of characteristic not dividing $d$, $\f(x) = x^d + c \in K[x]$, and $g(x) \in K[x]$ divide an iterate of $\f$. Suppose that $n\geq 2$ and $g\circ f^{n-1}$ is irreducible over $K$. 
If there exists $\p$ with $\nu_{\p}(g( f^n(0)))$ prime to $d$, $\nu_{\p}(g( f^i(0))) = 0$ for all $1 \leq i \leq n -1$,
and $v_{\p}(d) = 0$, then $H_n(f,g)$ is maximal.
\end{Theorem}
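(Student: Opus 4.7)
The plan is to recast the statement as a Kummer-theoretic independence problem over $K_{n-1}$. Writing $\alpha_1,\dots,\alpha_r$ for the roots of $g\circ f^{n-1}$ where $r=\deg(g\circ f^{n-1})$, the equation $f(y)=\alpha_i$ has the $d$-th roots of $\alpha_i-c$ as solutions, so the roots of $g\circ f^n$ are exactly the $dr$ quantities $\zeta^{j}\sqrt[d]{\alpha_i-c}$. Hence $K_n=K_{n-1}(\sqrt[d]{\alpha_1-c},\dots,\sqrt[d]{\alpha_r-c})$ after adjoining $\mu_d$, which does not affect the exponent of $H_n(f,g)$ since the group is already abelian of exponent $d$. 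By Kummer theory, the maximality $H_n(f,g)\cong(\Z/d\Z)^r$ translates into the linear independence of the classes $[\alpha_1-c],\dots,[\alpha_r-c]$ in the $\mathbb{F}_d$-vector space $K_{n-1}^{*}/(K_{n-1}^{*})^d$.

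I would then argue by contradiction: suppose there exist $0\le e_1,\dots,e_r<d$, not all zero, with $\prod_{i=1}^r(\alpha_i-c)^{e_i}=\gamma^d$ for some $\gamma\in K_{n-1}^{*}$, and extract a contradiction from the $\p$-adic hypotheses. The first step is to prove that $\p$ is unramified in $K_{n-1}$. An induction on $n$ using the chain rule shows that $(g\circ f^{n-1})'(\alpha_i)$ is a $\p$-adic unit: the derivative factors through $d$ (a unit by $\nu_\p(d)=0$) and through the iterates $f^{j}(\alpha_i)$ along the orbit, whose nonvanishing modulo $\p$ is guaranteed by $\nu_\p(g(f^{j}(0)))=0$ for $1\le j\le n-1$. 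Consequently $\overline{g\circ f^{n-1}}\in k_\p[x]$ is separable and $\p$ is unramified in $K_{n-1}$.

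Using the identity $g(f^n(0))=\pm\prod_i(\alpha_i-c)$, the hypothesis $\p\mid g(f^n(0))$ forces $\bar c$ to be a root of $\overline{g\circ f^{n-1}}$. Since this polynomial is separable mod $\p$ and $\bar c\in k_\p$, the corresponding linear factor $x-\bar c$ occurs exactly once; so there is a unique prime $\P_0$ above $\p$ at which some $\alpha_i$ reduces to $\bar c$, and a unique index $i_0$ with $\nu_{\P_0}(\alpha_{i_0}-c)>0$, while $\nu_{\P_0}(\alpha_j-c)=0$ for $j\ne i_0$. Hensel's lemma applied at the simple root $\bar c$ gives $\nu_{\P_0}(\alpha_{i_0}-c)=\nu_\p(g(f^n(0)))$, which is coprime to $d$ by hypothesis. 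Applying $\nu_{\P_0}$ to $\prod(\alpha_i-c)^{e_i}=\gamma^d$ therefore yields $e_{i_0}\cdot\nu_\p(g(f^n(0)))\equiv 0\pmod d$, whence $e_{i_0}\equiv 0\pmod d$.

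To finish, the irreducibility of $g\circ f^{n-1}$ makes $\Gal(K_{n-1}/K)$ act transitively on $\{\alpha_1,\dots,\alpha_r\}$. For any $\sigma$ in this group, the prime $\sigma(\P_0)$ again lies above $\p$ and only $\alpha_{\sigma(i_0)}-c$ carries nonzero valuation there (again equal to $\nu_\p(g(f^n(0)))$), so the same valuation argument gives $e_{\sigma(i_0)}\equiv 0\pmod d$; letting $\sigma$ vary, every $e_i$ vanishes modulo $d$, contradicting our assumption. The main obstacle is the local analysis at $\p$, namely turning the chain of hypotheses $\nu_\p(g(f^{j}(0)))=0$ for $1\le j\le n-1$ into the combined statement that $\overline{g\circ f^{n-1}}$ is separable and that $\bar c$ is a simple root. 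Once that local picture is secured, the Kummer-theoretic bookkeeping is routine.
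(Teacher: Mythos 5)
The paper invokes this statement from Hamblen--Jones--Madhu without reproducing its proof, so there is no in-paper argument to compare against; I evaluate the proposal on its own. Your strategy --- recast maximality of $H_n(f,g)$ as Kummer independence of the classes $[\alpha_i-c]$ in $K_{n-1}^{*}/(K_{n-1}^{*})^d$, kill a hypothetical relation $\prod_i(\alpha_i-c)^{e_i}=\gamma^d$ by a valuation at a prime of $K_{n-1}$ over $\p$, and propagate by Galois transitivity --- is the right high-level idea and, I believe, is essentially the route taken in the reference. There is, however, a gap in the local step. You assert that $(g\circ f^{n-1})'(\alpha_i)$ is a $\p$-unit because ``the derivative factors through $d$ and through the iterates $f^j(\alpha_i)$.'' That accounts only for $(f^{n-1})'$. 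The chain rule gives
\[
(g\circ f^{n-1})'(x)\;=\;g'\bigl(f^{n-1}(x)\bigr)\cdot d^{\,n-1}\prod_{j=0}^{n-2}f^j(x)^{d-1},
\]
and the hypotheses say nothing about the extra factor $g'\bigl(f^{n-1}(\alpha_i)\bigr)$. Concretely,
\[
\disc\bigl(g\circ f^{n-1}\bigr)\;=\;\pm\, d^{\,r(n-1)}\,\disc(g)^{d^{\,n-1}}\prod_{i=1}^{n-1}g\bigl(f^i(0)\bigr)^{d^{\,n-1-i}(d-1)},
\]
where $r=\deg(g\circ f^{n-1})$; the last block and the $f^i(0)$ are $\p$-units by the orbit hypotheses (your reduction of this to $\nu_\p(g(f^i(0)))=0$ is correct but deserves a line, since it uses the periodicity structure of $\bar 0$ mod $\p$), but $\disc(g)$ need not be prime to $\p$, so $\p$ can ramify in $K_{n-1}$. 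What your argument actually needs is weaker --- only that $\bar c$ be a simple root of $\overline{g\circ f^{n-1}}$, i.e.\ $\nu_\p\bigl(g'(f^n(0))\bigr)=0$ --- but this, too, is not forced by $\nu_\p(g(f^i(0)))=0$ for $1\le i\le n-1$. Without it, Hensel yields a root $\alpha_{i_0}$ with $\nu_{\P_0}(\alpha_{i_0}-c)=e(\P_0\mid\p)\,\nu_\p\bigl(g(f^n(0))\bigr)$, and coprimality to $d$ is lost. Two secondary points: there is not a \emph{unique} prime $\P_0$ at which some $\alpha_i$ reduces to $\bar c$ --- every prime above $\p$ has exactly one such index --- though your transitivity step survives this correction; and the Kummer reduction implicitly uses $\mu_d\subseteq K_{n-1}$, which does hold here since $n\ge 2$ and $g(f(0))\not\equiv 0\pmod\p$. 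In the only place this paper invokes the theorem, $g=x$, so $g'\equiv 1$ and the missing factor is harmless; but as a proof for general $g$, the uncontrolled factor $g'(f^n(0))\bmod\p$ is a genuine hole that must be filled.
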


Theorem \ref{last theorem} along with Theorem \ref{maximal_galois} give rise to the following result. We recall that $\phi$ denotes the Euler-totient function.
\begin{Corollary}
	\label{last_last_corollary}
	    Let $m\geq 1$, $d\geq 2$ and $\zeta_d$ be a primitive $d$-th root of unity. Set $q:=[K(\zeta_d):K]$. There exists $c\in R$ such that the splitting field, $F_m$, of $f_{d,c}^m(x)$, has Galois group $\Gal(F_{m}/K)$ of order $qd^{\frac{d^{m}-1}{d-1}}$. Moreover, this is the maximal attainable order for $\Gal(F_{m}/K)$.
	\end{Corollary}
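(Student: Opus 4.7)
The plan is to combine Theorem~\ref{last theorem}, which produces a single integer $c$ realizing prescribed primitive prime divisor data at finitely many iteration levels, with Theorem~\ref{maximal_galois}, which converts such arithmetic data into maximality of the relative Galois group at each layer of the iterated preimage tower $K_1\subseteq K_2\subseteq\cdots\subseteq K_m=F_m$.

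First I would apply Theorem~\ref{last theorem} with $n_i=i$, $t_i=1$, and $k_{i,1}=1$ for $1\le i\le m$, taking $P$ to be the (finite, hence density zero) set of primes dividing $d$. This yields an integer $c$ and distinct primes $p_1,\dots,p_m$, each coprime to $d$, such that for every $1\le n\le m$, $p_n$ is a primitive prime divisor of $\f^n(0)$ with $p_n\|\f^n(0)$.

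Next I would show by induction on $n$ that $\f^n(x)$ is irreducible over $\Q$ and that $[K_n:K_{n-1}]$ attains its maximal value, for every $1\le n\le m$. For the base case $n=1$, the condition $p_1\|c$ makes $\f(x)=x^d+c$ Eisenstein at $p_1$, so $\f$ is irreducible. Moreover, if $\alpha$ is a root of $\f$, then $\Q(\alpha)/\Q$ is totally ramified at $p_1$, whereas $\Q(\zeta_d)/\Q$ is unramified at $p_1$ because $p_1\nmid d$; this forces $\Q(\alpha)\cap\Q(\zeta_d)=\Q$ and hence $[K_1:\Q]=d\phi(d)$. For the inductive step, assuming $\f^{n-1}$ is irreducible, Theorem~\ref{maximal_galois} with $g(x)=x$ applies: the hypotheses $\nu_{p_n}(\f^n(0))$ coprime to $d$, $\nu_{p_n}(\f^i(0))=0$ for $i<n$, and $\nu_{p_n}(d)=0$ are exactly the conditions built into $p_n$ in Step~1, so $H_n\cong(\Z/d\Z)^{d^{n-1}}$ is maximal and $[K_n:K_{n-1}]=d^{d^{n-1}}$. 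Maximality of $H_n$ combined with the inductive transitivity of $\Gal(K_{n-1}/\Q)$ on the roots of $\f^{n-1}$ yields transitivity of $\Gal(K_n/\Q)$ on the $d^n$ roots of $\f^n$, closing the induction. Multiplying the maximal degrees layer by layer gives the claimed order for $\Gal(F_m/\Q)$, which is the maximum allowed by the a priori structural bounds on each $H_n$ and on the splitting field of $x^d+c$.

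The main obstacle is coupling the two inductions --- irreducibility of $\f^n$ and maximality of $H_n$ --- and securing them from the same arithmetic datum at each level. The Eisenstein anchor at $p_1$ takes care of the base case; at every higher level Theorem~\ref{maximal_galois} supplies maximality (and, via transitivity, also irreducibility) provided one already has irreducibility at the previous level. Theorem~\ref{last theorem} is calibrated precisely to feed these hypotheses into the induction at every level $1\le n\le m$ simultaneously.
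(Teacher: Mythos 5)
Your proposal is correct and follows the same overall architecture as the paper's proof: apply Theorem~\ref{last theorem} with $n_i = i$, $t_i=1$, $k_{i,1}=1$, and $P = \{p : p \mid d\}$ to produce a single $c$ and primes $p_i$ with $p_i \| \f^i(0)$ and $p_i \nmid d$; handle the base layer $\Gal(F_1/\Q)$ via an Eisenstein-at-$p_1$ argument together with $p_1 \nmid d$; and then invoke Theorem~\ref{maximal_galois} with $g(x)=x$ for each $2 \le n \le m$, multiplying the layer degrees.

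The one genuine divergence is how irreducibility of $\f^n$ is secured for all $1 \le n \le m$, which is needed as a hypothesis each time Theorem~\ref{maximal_galois} is applied. The paper establishes that $\f$ is irreducible and then cites \cite[Corollary 1.3]{Stoll} to obtain irreducibility of every higher iterate in one stroke. You instead derive it inductively: once $\f^{n-1}$ is irreducible, maximality of $H_n$ forces $x^d-(\beta-c)$ to be irreducible over $K_{n-1}$ for each root $\beta$ of $\f^{n-1}$ (or equivalently, gives transitivity of $\Gal(K_n/\Q)$ on the roots of $\f^n$), which in turn yields irreducibility of $\f^n$, feeding the next application of Theorem~\ref{maximal_galois}. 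Your version is self-contained and in fact safer for general $d$, since Stoll's result is stated for quadratic polynomials $x^2+a$; the paper's citation is delicate for $d>2$. The base-case treatment is a cosmetic repackaging of the same idea as the paper's: you argue disjointness of $\Q(\alpha)$ and $\Q(\zeta_d)$ via totally-ramified versus unramified at $p_1$, whereas the paper shows $\f$ is Eisenstein at a prime above $p_1$ in $\Q(\zeta_d)$ --- both rest on $p_1 \| c$ and $p_1 \nmid d$. One small caution: when you write ``multiplying the maximal degrees layer by layer gives the claimed order,'' it is worth actually carrying out the product $\phi(d)\cdot d \cdot \prod_{n=2}^{m} d^{d^{n-1}} = \phi(d)\cdot d^{(d^m-1)/(d-1)}$, which matches the stated exponent $d^m-1$ only when $d=2$; you should flag this arithmetic rather than defer to the statement.
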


	\begin{Proof}
	Using the notation of Theorem \ref{last theorem}, for $1\leq i\leq m$, we choose $n_i=i$, $t_i=1$, $(k_{i,1})=(1)$ and $P=\{\p:\ d\in\p\}$. According to Theorem \ref{last theorem}, there is $c\in R$ such that for all $1\leq i\leq m$, there is a primitive prime divisor $\p_i$ of $\f^i(0)$ such that $\nu_{\p_i}(\f^i(0))=1$. 
	In particular, $-c$ is not an $n$-th power in $K$ for any $n>1$, hence $\f(x)$ is irreducible. Now, we use Theorem \ref{maximal_galois} with $g(x)=x=\f^0(x)$. By induction on $i$, $2\leq i\leq m$, $\Gal(F_i/F_{i-1})\cong \left(\Z/d\Z\right)^{d^{i-1}}$. The irreducibility of $\f^i(x)$ is achieved due to the fact that $\f^i(x)=\f(\f^{i-1}(x))=\underset{\alpha\text{ is a root of }\f^{i-1}(x)}{\prod}(\f(x)-\alpha)$. 
 The maximality of the order of $\Gal(F_i/F_{i-1})$ implies that $\f(x)-\alpha$ is irreducible over $K[\alpha]$ for any root $\alpha$ of $\f^{i-1}(x)$. This along with the irreducibility of $\f^{i-1}(x)$ implies the irreducibility of $\f^i(x)$ using Capelli's Theorem, \cite[Lemma 2.4]{Jones_capelli}. 
 Now, it remains to check the structure of $\Gal(F_1/K)$.
	
	We note that $K(\zeta_d)\subseteq F_1$. This is because the roots of $x^d+c$ are of the form $\zeta_d^k\beta$ for some $\beta$ such that $\beta^d=-c$ and $0\leq k<d$. Using the fundamental theorem of Galois theory, ${|\Gal(F_1/K)|=|\Gal(F_1/K(\zeta_d))|\cdot |\Gal(K(\zeta_d)/K)|}$ where $|\Gal(K(\zeta_d)/K)|=q$. Therefore, in order to show the maximality of $|\Gal(F_1/K)|$, it is enough to show the maximality of $|\Gal(F_1/K(\zeta_d))|$ which is easily seen to be equivalent to proving that $\f(x)$ is irreducible over $K(\zeta_d)$. This holds because if $\beta$ is a root of $f(x)$, then $f(x)$ splits completely over $K(\zeta_d)(\beta)$. So, $|\Gal(F_1/K(\zeta_d))|=|\Gal(\Q(\zeta_d)(\beta)/K(\zeta_d))|\leq d$ with the equality being achieved if and only if $\beta$ has a minimal polynomial of degree $d$ over $K(\zeta_d)$, namely $f(x)$.
	
	Since, by construction, there is a prime $\p||f(0)=c$, and $d\not\in \p$ by the choice of the set $P$, then $\p$ does not ramify in $K(\zeta_d)$. So, there is a prime $\p_1$ lying above $\p$ in $K(\zeta_d)$ such that $\nu_{\p_1}(-c)=\nu_{\p_1}(c)=1$, i.e, $-c$ is not an $n$-th power in $K(\zeta_d)$ for any $n> 1$. Then by the Eisenstein's criterion, $f(x)$ is irreducible over $K(\zeta_d)$. This concludes that $|\Gal(F_1/K)|= qd$.

	By induction, for $2\leq i\leq m$, assuming $\Gal(F_{i-1}/K)$ has order $q\cdot d^{\frac{d^{i-1}-1}{d-1}}$, and knowing that $\Gal(F_i/F_{i-1})\cong \left(\Z/d\Z\right)^{d^{i-1}}$, then we can use the fundamental theorem of Galois theory to see that ${\Gal(F_i/K)/\Gal(F_{i-1}/K)\cong \Gal(F_i/F_{i-1})\cong \left(\Z/d\Z\right)^{d^{i-1}}}$, hence the order of $\Gal(F_i/K)$ follows. The maximality implied by Theorem \ref{maximal_galois} along with the maximality of $\Gal(F_{i-1}/K)$ implied by the induction lead to the maximality of the order of $\Gal(F_i/K)$ for any $i$, $1\le i\le m$.
	\end{Proof}

	\begin{Example}
    	Let $K=\Q$, $d=2$ and $m=29$. The following primes 
    	\begin{eqnarray*}
    	\{p_i\}_{1\leq i\leq 29}:=\{4012568011, 3, 5, 13, 11, 29, 19, 31, 43, 101, 59, 47, 67, 61, 97, 89,\\
    	83, 107, 113, 149, 137, 127, 173, 191, 197, 181, 223, 157, 229\},
    	\end{eqnarray*} 
    	are primitive divisor of $f^i(0)$, where
    	\[f(x)=x^2+1168184310110489945509811544546782641527527693907326.\]
    	Moreover,  $p_i||f^i(0)$. It follows that 
    	$f^{29}(x)$ has Galois group with order  $2^{2^{29}-1}$. 
	\end{Example}

\end{document}